\documentclass[11pt,reqno]{amsart}

\setlength{\textwidth}{6.3in} \setlength{\textheight}{9.25in}
\setlength{\evensidemargin}{0in} \setlength{\oddsidemargin}{0in}
\setlength{\topmargin}{-.3in}

\usepackage{graphicx}
\usepackage{tikz}
\usepackage{capt-of}

\usepackage{xspace}
\usepackage{amsmath,amsthm,amsfonts,amssymb,latexsym,mathrsfs,color,extarrows}
\usepackage{hyperref}

\newtheorem{theorem}{Theorem}%[section]
\newtheorem{corollary}[theorem]{Corollary}
\newtheorem{proposition}[theorem]{Proposition}

\newtheorem{lemma}[theorem]{Lemma}
\newtheorem{definition}[theorem]{Definition}
\newtheorem{example}[theorem]{Example}

\newcommand{\SYT}{{\rm SYT\,}}
\newcommand{\mpp}{{\mathcal P}}
\newcommand{\altdes}{{\rm altdes\,}}
\newcommand{\Stab}{{\rm Stab\,}}
\newcommand{\sst}{{\rm st\,}}
\newcommand{\durun}{{\rm durun\,}}
\newcommand{\cpkk}{{\rm cpk\,}}
\newcommand{\cdasc}{{\rm cdasc\,}}
\newcommand{\cddes}{{\rm cddes\,}}
\newcommand{\cddd}{{\rm cdd\,}}
\newcommand{\crun}{{\rm crun\,}}
\newcommand{\rpk}{{\rm rpk\,}}
\newcommand{\altrun}{{\rm altrun\,}}
\newcommand{\las}{{\rm las\,}}
\newcommand{\ass}{{\rm as\,}}
\newcommand{\Sch}{{\rm Sch\,}}
\newcommand{\cval}{{\rm cval\,}}
\newcommand{\cpk}{{\rm cpk\,}}
\newcommand{\st}{{\rm st\,}}
\newcommand{\I}{{\rm I\,}}
\newcommand{\Bddes}{{\rm Bddes\,}}
\newcommand{\Expk}{{\rm Expk\,}}
\newcommand{\Ubdes}{{\rm Ubdes\,}}
\newcommand{\bddes}{{\rm bddes\,}}
\newcommand{\expk}{{\rm expk\,}}
\newcommand{\ubdes}{{\rm ubdes\,}}
\newcommand{\LSPD}{{\rm LSPD\,}}
\newcommand{\LSP}{{\rm LSP\,}}
\newcommand{\LSD}{{\rm LSD\,}}
\newcommand{\LS}{{\rm LS\,}}
\newcommand{\cdd}{{\rm cdd\,}}
\newcommand{\Ddes}{{\rm Ddes\,}}
\newcommand{\Laplat}{{\rm Laplat\,}}
\newcommand{\Orb}{{\rm Orb\,}}
\newcommand{\Des}{{\rm Des\,}}
\newcommand{\Asc}{{\rm Asc\,}}
\newcommand{\Dasc}{{\rm Dasc\,}}
\newcommand{\cda}{{\rm cda\,}}
\newcommand{\JSPD}{{\rm JSPD\,}}
\newcommand{\JSP}{{\rm JSP\,}}
\newcommand{\JS}{{\rm JS\,}}
\newcommand{\el}{{\rm el\,}}
\newcommand{\ol}{{\rm ol\,}}
\newcommand{\m}{{\rm M}}
\newcommand{\doublead}{{\rm doublead\,}}
\newcommand{\udrun}{{\rm udrun\,}}
\newcommand{\run}{{\rm run\,}}
\newcommand{\set}{{\rm Set\,}}
\newcommand{\dasc}{{\rm dasc\,}}
\newcommand{\desp}{{\rm desp\,}}
\newcommand{\fdes}{{\rm fdes\,}}
\newcommand{\fasc}{{\rm fasc\,}}
\newcommand{\bs}{{\mathfrak S}}
\newcommand{\plat}{{\rm plat\,}}
\newcommand{\ap}{{\rm ap\,}}
\newcommand{\lap}{{\rm lap\,}}
\newcommand{\lpk}{{\rm lpk\,}}
\newcommand{\single}{{\rm single\,}}
\newcommand{\pk}{{\rm pk\,}}
\newcommand{\ddes}{{\rm ddes\,}}
\newcommand{\des}{{\rm des\,}}
\newcommand{\Exc}{{\rm Exc\,}}
\newcommand{\exc}{{\rm exc\,}}
\newcommand{\fexc}{{\rm fexc\,}}
\newcommand{\aexc}{{\rm aexc\,}}
\newcommand{\we}{{\rm wexc\,}}
\newcommand{\ce}{{\rm cexc\,}}
\newcommand{\cyc}{{\rm cyc\,}}
\newcommand{\zz}{{\rm z\,}}
\newcommand{\fix}{{\rm fix\,}}
\newcommand{\dc}{{\rm dc\,}}
\newcommand{\DS}{\mathcal{DS}}
\newcommand{\mc}{{\mathcal C}_A}
\newcommand{\cc}{{\mathcal C}}
\newcommand{\man}{\mathfrak{A}_n}
\newcommand{\mdn}{\mathcal{D}}
\newcommand{\mmn}{\mathcal{M}_{2n}}
\newcommand{\D}{\mathfrak{D}}
\newcommand{\msb}{\mathfrak{S}_8}
\newcommand{\msm}{\mathfrak{S}_m}
\newcommand{\mcn}{\mathfrak{C}_{2n}}
\newcommand{\mcnn}{\mathfrak{C}_{2n+1}}
\newcommand{\msn}{\mathfrak{S}_n}
\newcommand{\rss}{\mathcal{SS}}
\newcommand{\ms}{\mathfrak{S}}
\newcommand{\inv}{{\rm inv\,}}
\newcommand{\msnn}{\mathfrak{S}_{n+1}}
\newcommand{\ud}{{\rm ud\,}}
\newcommand{\as}{{\rm as\,}}
\newcommand{\rs}{\mathcal{RS}}
\newcommand{\bn}{{\rm\bf n}}
\newcommand{\lrf}[1]{\lfloor #1\rfloor}
\newcommand{\lrc}[1]{\lceil #1\rceil}
\newcommand{\sgn}{{\rm sgn\,}}
\newcommand{\mbn}{{\mathcal B}_n}
\newcommand{\mq}{\mathcal{Q}}
\newcommand{\mqn}{\mathcal{Q}_n}
\newcommand{\z}{ \mathbb{Z}}
\newcommand{\asc}{{\rm asc\,}}
\newcommand{\tg}{{\rm tg\,}}
\DeclareMathOperator{\N}{\mathbb{N}}
\DeclareMathOperator{\LHP}{LHP}
\DeclareMathOperator{\R}{\mathbb{R}}
\DeclareMathOperator{\Q}{\mathbb{Q}}
\DeclareMathOperator{\re}{Re}
\newcommand{\rz}{{\rm RZ}}
\newcommand{\Eulerian}[2]{\genfrac{<}{>}{0pt}{}{#1}{#2}}
\newcommand{\Stirling}[2]{\genfrac{\{}{\}}{0pt}{}{#1}{#2}}
\newcommand{\stirling}[2]{\genfrac{[}{]}{0pt}{}{#1}{#2}}
\newcommand{\arxiv}[1]{\href{http://arxiv.org/abs/#1}{\texttt{arXiv:#1}}}
\linespread{1.25}
\DeclareMathOperator{\YTab}{Ytab}

\title{Derivatives, Eulerian polynomials and the $g$-indexes of Young tableaux}
\author[G.-N.~Han]{Guo-Niu Han}
\address{I.R.M.A., UMR 7501, Universit\'e de Strasbourg et CNRS, 7 rue Ren\'e Descartes, F-67084 Strasbourg, France}
\email{guoniu.han@unistra.fr (G.-N.~Han)}
\author[S.-M.~Ma]{Shi-Mei Ma$^*$}
\thanks{* Corresponding author}
\address{School of Mathematics and Statistics,
        Northeastern University at Qinhuangdao,
         Hebei 066004, P.R. China}
\email{shimeimapapers@163.com (S.-M.~Ma)}
\subjclass[2010]{Primary 05A05; Secondary 05A17}
\begin{document}
\begin{abstract}
In this paper we first present summation formulas for $k$-order Eulerian polynomials and
$1/k$-Eulerian polynomials. We then present combinatorial expansions of $(c(x)D)^n$ in terms of inversion sequences as well as $k$-Young tableaux,
where $c(x)$ is a differentiable function in the indeterminate $x$ and $D$ is the derivative with respect to $x$. We define the $g$-indexes of $k$-Young tableaux and Young tableaux, which have important applications in combinatorics. By establishing some relations between $k$-Young tableaux and standard Young tableaux, we express Eulerian polynomials, second-order Eulerian polynomials, Andr\'e polynomials and the generating polynomials of gamma coefficients of Eulerian polynomials in terms of standard Young tableaux,
which imply a deep connection among these polynomials.
\end{abstract}

\keywords{Eulerian polynomials; Inversion sequences; Young tableaux; $g$-indexes}

\maketitle
\section{Introduction}
Let $\msn$ be the symmetric group on the set $[n]=\{1,2,\ldots,n\}$. Let $\pi=\pi(1)\pi(2)\cdots\pi(n)\in\msn$.
A {\it descent} of $\pi$ is an index $i\in [n]$ such that $\pi(i)>\pi(i+1)$ or $i=n$. Let $\des(\pi)$ be the number of descents of $\pi$.
The number $\Eulerian{n}{i}=\{\pi\in\msn: \des(\pi)=i\}$ is called the Eulerian number, and the polynomial
$$A_n(x)=\sum_{\pi\in\msn}x^{\des(\pi)}$$
is called the {\it Eulerian polynomial}.
The historical origin of Eulerian polynomial is the following summation formula (see~\cite{Petersen15}):
\begin{equation}\label{Anx-def}
\left(x\frac{d}{dx}\right)^n\frac{1}{1-x}=\sum_{k=0}^\infty k^nx^k=\frac{A_n(x)}{(1-x)^{n+1}}.
\end{equation}
In the past decades, there has been much work on Eulerian polynomial and its generalizations (see~\cite{Foata09,Hwang20,Rzakowski19,Zhu} for instance). For example,
by using a kind of first-order differential equation,
Rz\c{a}dkowski and Urli\'{n}ska~\cite{Rzakowski19} considered a unified generalization of Eulerian polynomials and second-order Eulerian polynomials.
In the following we first recall the definitions of $k$-order Eulerian polynomials and
$1/k$-Eulerian polynomials, and then we present summation formulas for these polynomials.

A $k$-{\it Stirling permutation} of order $n$ is a permutation of the multiset $\{1^k,2^k,\ldots,n^k\}$ such that for each $i$, $1\leq i\leq n$,
all entries between any two occurrences of $i$ are at least $i$. When $k=2$, the $k$-{\it Stirling permutation} is reduced to the ordinary Stirling permutation (\cite{Gessel78}).
We say that an index $i\in [kn]$ is a {\it descent} of $\sigma$ if
$\sigma_{i}>\sigma_{i+1}$ or $i=kn$. Let $\mqn(k)$ be the set of $k$-Stirling permutations of order $n$. The {\it $k$-order Eulerian polynomials} are defined by $$C_n(x;k)=\sum_{\sigma\in\mqn(k)}x^{\des(\pi)},~C_0(x;k)=1.$$
Following~\cite[Lemma~1]{Dzhumadil14},
the polynomials $C_n(x;k)$ satisfy the recurrence relation
\begin{equation}\label{recuCnx}
C_{n+1}(x;k)=(kn+1)xC_n(x;k)+x(1-x)C_n'(x;k),
\end{equation}
In particular, $C_n(x;1)=A_n(x)$.
When $k=2$, the polynomial $C_n(x;k)$ is reduced to the {\it second-order Eulerian polynomial} $C_n(x)$, i.e., $C_n(x;2)=C_n(x)$.
Stirling permutations and the second-order Eulerian polynomial were defined by Gessel and Stanley~\cite{Gessel78}, and they proved that
$$\sum_{k=0}^\infty \Stirling{k+n}{k}x^k=\frac{C_n(x)}{(1-x)^{2n+1}},$$
where $\Stirling{n}{k}$ is the {\it Stirling number of the second kind}, i.e.,
the number of ways to partition the set $[n]$ into $k$ non-empty subsets. The second-order Eulerian polynomials have been extensively studied in recent years, see~\cite{Haglund12,Hwang20,Ma1902} and references therein.

Let ${\rm s}=\{s_i\}_{i\geq1}$ be a sequence of positive integers.
A geometric interpretation of Eulerian polynomials is obtained by considering the
$\rm{s}$-lecture hall polytope $\mpp_n^{(\rm{s})}$,
which is defined by
$$\mpp_n^{(\rm{s})}=\left\{(\lambda_1,\lambda_2,\ldots,\lambda_n)\in\mathbb{R}^n~\big{|}~ 0\leq\frac{\lambda_1}{s_1}\leq\frac{\lambda_2}{s_2}\leq\cdots \leq\frac{\lambda_n}{s_n}\leq 1\right\}.$$
Set $e_0=0$ and $s_0=1$.
Let ${\rm I}_n^{({\rm s})}=\{\mathbf{e}=(e_1,\ldots,e_n)\in\z^n \mid 0\leq e_i<s_i ~\text{for $1\leq i\leq n$}\}$ be
the set of $n$-dimensional {\it$\rm s$-inversion sequences}.
The polynomial
$$E_n^{(\rm s)}(x)=\sum_{\mathbf{e}\in \rm{I}_n^{(\rm s)}}x^{\asc(\mathbf{e})}$$
is known as the {\it $\rm s$-Eulerian polynomial}, where
$\asc(\mathbf{e})=\#\left\{i\in\{0,1,2,\ldots,n-1\}\big{|}\frac{e_i}{s_i}<\frac{e_{i+1}}{s_{i+1}}\right\}$.
In particular, we have
$$E_n^{(1,2,\ldots,n)}(x)=A_n(x)/x.$$

Let $k$ be a fixed positive integer.
The {\it $1/k$-Eulerian polynomials} $A_{n}^{(k)}(x)$ are defined by the generating function
\begin{equation*}\label{Ankx-deff}
\sum_{n=0}^\infty A_{n}^{(k)}(x)\frac{z^n}{n!}=\left(\frac{1-x}{e^{kz{(x-1)}}-x}\right)^{\frac{1}{k}}.
\end{equation*}
Savage and Viswanathan~\cite{Savage12} showed that
\begin{equation*}\label{Ankx-Savage}
A_n^{(k)}(x)=E_n^{(1,k+1,2k+1\ldots,(n-1)k+1)}(x).
\end{equation*}
For $\pi\in\msn$, an {\it excedance} of $\pi$ is an index $i\in [n]$ such that $\pi(i)>i$.
Let $\exc(\pi)$ (resp.~$\cyc(\pi)$) be the number of excedances (resp.~cycles) of $\pi$.
It follows from~\cite[Proposition~7.3]{Brenti00} that
$$A_{n}^{(k)}(x)=\sum_{\pi\in\msn}x^{\exc(\pi)}k^{n-\cyc(\pi)}.$$
Another combinatorial interpretation of $A_{n}^{(k)}(x)$ is given as follows:
\begin{equation*}\label{Ankx-stirling}
A_n^{(k)}(x)=\sum_{\sigma\in \mqn(k)}x^{\ap(\sigma)},
\end{equation*}
where $\ap(\sigma)$ is the number of the longest ascent plateaus of $\sigma$, i.e., the number of indexes
$i\in \{2,3,\ldots,nk-k+1\}$ such that $\sigma_{i-1}<\sigma_i=\sigma_{i+1}=\cdots=\sigma_{i+k-1}$ (see~\cite[Theorem~2]{Ma15}).
The polynomials $A_n^{(k)}(x)$ satisfy the recurrence relation
\begin{equation}\label{Ankxrecu}
A_{n+1}^{(k)}(x)=(1+knx)A_n^{(k)}(x)+kx(1-x)\frac{d}{dx}A_n^{(k)}(x),
\end{equation}
with the initial conditions $A_0^{(k)}(x)=A_1^{(k)}(x)=1$ (see~\cite[Eq.~(6)]{Ma15}).
Set $M_n(x)=A_n^{(2)}(x)$. Let
$$N_n(x)=\sum_{\sigma\in \mqn}x^{\lap(\sigma)}$$
be the {\it left ascent plateau polynomial},
where $\lap(\sigma)$ is the number of the left ascent plateaux of $\sigma$, i.e., the number of indices
$i\in \{1,2,3,\ldots, 2n-1\}$ such that $\sigma_{i-1}<\sigma_i=\sigma_{i+1}$ and $\sigma(0)=0$ (see~\cite[Theorem~3]{Ma15}).
From~\cite[p.~2]{Ma17}, we see that $N_n(x)=x^nM_n(1/x)=x^nA_n^{(2)}(1/x)$.
Let $B_n(x)$ be the type $B$ Eulerian polynomial.
According to~\cite[Proposition~1]{Ma17}, we have
\begin{align*}
2^nA_n(x)&=\sum_{i=0}^n\binom{n}{i}N_i(x)N_{n-i}(x),~
B_n(x)=\sum_{i=0}^n\binom{n}{i}N_i(x)M_{n-i}(x).
\end{align*}

Let $F_n:=F_n(x;\alpha,\beta,a,b,c)$ be the polynomials defined by the following relation:
\begin{equation*}\label{GeneralEulerian-def}
\left(\frac{a+bx+cx^2}{(1-x)^\alpha}\frac{d}{dx}\right)^n\frac{1}{(1-x)^\beta}=\frac{F_n}{(1-x)^{n+n\alpha+\beta}}.
\end{equation*}
Then $F_0=1$ and it is routine to verify that the polynomials $F_n$ satisfy the recurrence relation
\begin{equation}\label{RecuFn}
F_{n+1}=(n+n\alpha+\beta)(a+bx+cx^2)F_n+(a+bx+cx^2)(1-x)F_n'.
\end{equation}
Comparing~\eqref{recuCnx} and~\eqref{Ankxrecu} with~\eqref{RecuFn}, it is routine to verify the first main result of this paper.
\begin{theorem}\label{thm001}
Let $k$ be a positive integer. For $n\geq 1$, we have
\begin{align*}
&\left(\frac{x}{(1-x)^k}\frac{d}{dx}\right)^n\frac{1}{1-x}=\frac{C_n(x;k+1)}{(1-x)^{n+kn+1}},\\
&\left(kx\frac{d}{dx}\right)^n\frac{1}{(1-x)^{1/k}}=\frac{x^nA_n^{(k)}(1/x)}{(1-x)^{n+\frac{1}{k}}}.
\end{align*}
In particular, we have
\begin{equation}\label{eq:Cnx}
\left(\frac{x}{1-x}\frac{d}{dx}\right)^n\frac{1}{1-x}=\frac{C_n(x)}{(1-x)^{2n+1}},
\end{equation}
\begin{equation*}\label{eq:Nnx}
\left(2x\frac{d}{dx}\right)^n\frac{1}{\sqrt{1-x}}=\frac{N_n(x)}{(1-x)^{n}\sqrt{1-x}}.
\end{equation*}
\end{theorem}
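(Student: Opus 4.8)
The plan is to read off both identities from the general recurrence~\eqref{RecuFn} by two suitable specialisations of the parameters $(\alpha,\beta,a,b,c)$, and then to match the resulting recurrences with~\eqref{recuCnx} and~\eqref{Ankxrecu}; the two ``in particular'' formulas are then immediate specialisations. Indeed, taking $k=1$ in the first identity and using $C_n(x;2)=C_n(x)$ together with $n+kn+1=2n+1$ gives~\eqref{eq:Cnx}, while taking $k=2$ in the second identity and using $(1-x)^{1/2}=\sqrt{1-x}$ together with $N_n(x)=x^nA_n^{(2)}(1/x)$ (recorded above) gives the stated formula for $N_n(x)$. So it suffices to prove the two displayed identities.

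For the first identity I would specialise the defining relation of $F_n$ to $(\alpha,\beta,a,b,c)=(k,1,0,1,0)$, so that $\frac{a+bx+cx^2}{(1-x)^\alpha}=\frac{x}{(1-x)^k}$, $\frac{1}{(1-x)^\beta}=\frac{1}{1-x}$, and $n+n\alpha+\beta=n+kn+1$. Then~\eqref{RecuFn} reads $F_{n+1}=(n+kn+1)xF_n+x(1-x)F_n'$, which is precisely~\eqref{recuCnx} with $k$ replaced by $k+1$, because $(k+1)n+1=n+kn+1$; moreover $F_0=1=C_0(x;k+1)$. A one-line induction on $n$ then gives $F_n=C_n(x;k+1)$ for every $n\geq 0$, and substituting this into the definition of $F_n$ yields $\left(\frac{x}{(1-x)^k}\frac{d}{dx}\right)^n\frac{1}{1-x}=\frac{C_n(x;k+1)}{(1-x)^{n+kn+1}}$.

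For the second identity I would specialise the defining relation of $F_n$ to $(\alpha,\beta,a,b,c)=(0,1/k,0,k,0)$, so that $\frac{a+bx+cx^2}{(1-x)^\alpha}=kx$, $\frac{1}{(1-x)^\beta}=\frac{1}{(1-x)^{1/k}}$, and $n+n\alpha+\beta=n+\tfrac1k$ (the non-integer exponent $\beta=1/k$ causes no trouble, as~\eqref{RecuFn} remains valid). Then~\eqref{RecuFn} becomes $F_{n+1}=(kn+1)xF_n+kx(1-x)F_n'$ with $F_0=1$. It remains to check that the reversed polynomials $P_n(x):=x^nA_n^{(k)}(1/x)$ obey this same recurrence with the same initial value; uniqueness then gives $F_n=P_n$, and the definition of $F_n$ yields $\left(kx\frac{d}{dx}\right)^n\frac{1}{(1-x)^{1/k}}=\frac{x^nA_n^{(k)}(1/x)}{(1-x)^{n+1/k}}$. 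To obtain the recurrence for $P_n$, substitute $x\mapsto 1/x$ in~\eqref{Ankxrecu} and multiply through by $x^{n+1}$; using the chain-rule identities $x^{n-1}(A_n^{(k)})'(1/x)=nP_n(x)-xP_n'(x)$ and $x^n(A_n^{(k)})'(1/x)=nxP_n(x)-x^2P_n'(x)$, the transformed relation collapses after cancellation to $P_{n+1}=(kn+1)xP_n+kx(1-x)P_n'$, while $P_0=A_0^{(k)}=1$.

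The only step that is not purely mechanical is this last variable reversal: one has to track the powers of $x$ through the substitution $x\mapsto 1/x$ in~\eqref{Ankxrecu} and organise the chain-rule terms so that the $(A_n^{(k)})'(1/x)$ contributions reassemble exactly into $P_n$ and $P_n'$. Everything else is a direct comparison of recurrences together with a trivial induction, so I expect that reversal computation to be the main --- and only mildly delicate --- obstacle.
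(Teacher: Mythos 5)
Your proposal is correct and follows essentially the same route as the paper: the paper's own proof consists precisely of comparing the recurrence~\eqref{RecuFn} for $F_n$ (under the two specialisations of $(\alpha,\beta,a,b,c)$ you chose) with~\eqref{recuCnx} and~\eqref{Ankxrecu}, the latter after the reversal $P_n(x)=x^nA_n^{(k)}(1/x)$, which the paper leaves as ``routine to verify'' and you carry out explicitly and correctly. The deduction of the two ``in particular'' formulas via $C_n(x;2)=C_n(x)$ and $N_n(x)=x^nA_n^{(2)}(1/x)$ also matches the paper.
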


Throughout this paper, we always let $c:=c(x)$ and $f:=f(x)$ be two differentiable functions in the indeterminate $x$, and let $D=\frac{d}{dx}$.
Motivated by Theorem~\ref{thm001}, we shall consider expansions of $(cD)^nf$. The paper is organized as follows.
In the next section, we collect the definitions, notation and preliminary results.
In Section~\ref{section3}, we express $(cD)^nf$ in terms of inversion sequences as well as $k$-Young tableaux. In particular, we define the $g$-indexes of $k$-Young tableau and Young tableau, which have important applications. Also, several main results including Theorems~\ref{th:Cn} and~\ref{th:An} are stated in that section.
In Sections~\ref{section04},~\ref{section05} and~\ref{section06}, we respectively prove three main results, i.e.,
second-order Eulerian polynomials, Eulerian polynomials and Andr\'e polynomials can be expressed in terms of standard Young tableaux.
\section{Preliminary}\label{section2}
The expansions of $(cD)^nf$ have been studied as early as 1823 by Scherk~\cite{Scherk1823}. An illustration of the correspondence between
Scherk's expansion of $(cD)^nf$ and forests of trees can be found in Appendix A of~\cite{Blasiak10}.
In particular, Scherk~\cite[p.~6]{Scherk1823} found that
\begin{equation}\label{Stirling-def}
(xD)^n=\sum_{k=0}^n\Stirling{n}{k}x^kD^k.
\end{equation}
Many generalizations and variations of~\eqref{Stirling-def} frequently appeared in combinatorics and normal ordering problems (see~\cite{Charalambides88,Eu17,Mansour16} for instance).

It will be convenient in the sequel to adopt the convention that $\mathbf{f}_k=D^kf$ and $c_k=D^kc$. In particular, $\mathbf{f}_0=f$ and $c_0=c$.
The first few $(cD)^nf$ are given as follows:
\begin{align*}
(cD)f&=(c)  {\mathbf{f}}_1,\\
(cD)^2f&=(c c_1 )  {\mathbf{f}}_1 +(c^2 )  {\mathbf{f}}_2,\\
(cD)^3f&=(c c_1^2  +c^2 c_2 )  {\mathbf{f}}_1 +(3c^2 c_1 )  {\mathbf{f}}_2 +(c^3 )  {\mathbf{f}}_3,\\
(cD)^4f&=(c c_1^3  +4c^2 c_1 c_2  +c^3 c_3 )  {\mathbf{f}}_1 +(7c^2 c_1^2  +4c^3 c_2 )  {\mathbf{f}}_2 +(6c^3 c_1 )  {\mathbf{f}}_3 +(c^4 )  {\mathbf{f}}_4,\\
(cD)^5f&=(c c_1^4  +11c^2 c_1^2 c_2  +4c^3 c_2^2  +7c^3 c_1 c_3  +c^4 c_4 )  {\mathbf{f}}_1 +(15c^2 c_1^3  +30c^3 c_1 c_2 \\
& \quad +5c^4 c_3 )  {\mathbf{f}}_2 +(25c^3 c_1^2  +10c^4 c_2 )  {\mathbf{f}}_3 +(10c^4 c_1 )  {\mathbf{f}}_4 +(c^5 )  {\mathbf{f}}_5.
\end{align*}
\begingroup\vspace*{-\baselineskip}
\captionof{table}{Expansions of $(cD)^nf$}\label{tab:1}
\vspace*{\baselineskip}\endgroup

For $n\geq 1$,
we define
\begin{equation}\label{Ank-def}
(cD)^nf =\sum_{k=1}^nA_{n,k}\mathbf{f}_k.
\end{equation}
It is evident that $A_{n,k}$ is a function of $c,c_1,\ldots,c_{n-k}$. Thus we can write $A_{n,k}$ as follows: $$A_{n,k}:=A_{n,k}(c,c_1,c_2,\ldots,c_{n-k}).$$
In particular, $A_{1,1}=c$, $A_{2,1}=cc_1$ and $A_{2,2}=c^2$.
By induction, it is easy to verify that
$A_{n+1,1}=cDA_{n,1}$, $A_{n,n}=c^n$ and for $2\leq k\leq n$, we have
\begin{equation}\label{Ank-recu}
A_{n+1,k}=cA_{n,k-1}+cDA_{n,k}.
\end{equation}
The numbers appearing in $A_{n,k}$ as coefficients can be found in~\cite[A139605]{Sloane}.
We refer the reader to~\cite{Briand20,Mansour16,Mijajlovic98} for various results and examples on the expansions of $(cD)^n$.

In 1973, Comtet obtained the following result.
\begin{proposition}[{\cite{Comtet73}}]
Let $A_{n,k}$ be defined by~\eqref{Ank-def}.
For $1\leq k\leq n$, we have
\begin{equation}\label{Ank-Comtet}
A_{n,k}=\frac{c}{k!}\sum (2-k_1)(3-k_1-k_2)\cdots (n-k_1-k_2-\cdots-k_{n-1})\frac{c_{k_1}}{k_1!}\cdots\frac{c_{k_{n-1}}}{k_{n-1}!},
\end{equation}
where the summation is over all sequences $(k_1,k_2,\ldots,k_{n-1})$ of nonnegative integers such that
$k_1+k_2+\cdots+k_{n-1}=n-k$ and $k_1+\cdots+k_j\leq j$ for any $1\leq j\leq n-1$. %In particular,
\end{proposition}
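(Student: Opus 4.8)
The plan is to prove~\eqref{Ank-Comtet} combinatorially, by expanding $(cD)^nf$ as a sum over ``differentiation histories'' and then counting. Since $cD$ is the operator $g\mapsto c\cdot Dg$, one application first differentiates term by term over the factors of each monomial (product rule) and then multiplies by $c$. I would record a term of the fully expanded $(cD)^nf$ by a \emph{history}: run the $n$ applications in order; at the end of step $t$ a new factor $c$ is created, call it \emph{token} $t$; at step $t$ one differentiates exactly one of the objects present just before that multiplication, namely one of the tokens $1,\dots,t-1$ or the symbol $\mathbf{f}$ (so step $1$ is forced to differentiate $\mathbf{f}$). If token $i$ gets differentiated $\ell_i$ times over the $n$ steps (necessarily at steps $>i$, so $0\le\ell_i\le n-i$, and $\ell_n=0$) and $\mathbf{f}$ gets differentiated $k$ times, the term is $\bigl(\prod_{i=1}^{n}c_{\ell_i}\bigr)\mathbf{f}_k$; treating repeated factors as distinct tokens, the coefficient of a monomial in $(cD)^nf$ is exactly the number of histories producing it. Grouping by the profile $(\ell_1,\dots,\ell_{n-1})$ and extracting $c_{\ell_n}=c_0=c$ yields
\[
A_{n,k}=c\sum_{\substack{\ell_1+\cdots+\ell_{n-1}=n-k\\ \ell_i\ge 0}}N(\ell_1,\dots,\ell_{n-1})\prod_{i=1}^{n-1}c_{\ell_i},
\]
where $N(\ell_1,\dots,\ell_{n-1})$ denotes the number of histories with that profile.

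The heart of the argument is the evaluation of $N$. Because token $i$ may be fed only at steps $i+1,\dots,n$, a history with a fixed profile is the same thing as a placement of increment events into the $n$ steps subject to this ``minimum-step'' rule. I would multiply $N$ by $k!\prod_i\ell_i!$, thereby labelling the $k$ increments of $\mathbf{f}$ and, for each $i$, the $\ell_i$ increments of token $i$, and then count the placements greedily in the order of tokens $n-1,n-2,\dots,1$ and finally $\mathbf{f}$: token $n-1$ must take step $n$, token $n-2$ takes unused steps among $\{n-1,n\}$, and so on, while $\mathbf{f}$ is unrestricted. With $R_i:=\ell_i+\ell_{i+1}+\cdots+\ell_{n-1}$, the number of steps available when token $i$ is processed is $(n-i)-R_{i+1}$, whence
\[
k!\Bigl(\textstyle\prod_{i}\ell_i!\Bigr)N(\ell_1,\dots,\ell_{n-1})
=k!\prod_{i=1}^{n-1}\frac{\bigl((n-i)-R_{i+1}\bigr)!}{\bigl((n-i)-R_{i}\bigr)!}.
\]
Writing $u_i:=(n-i)-R_i$, so that $(n-i)-R_{i+1}=u_{i+1}+1$, using $(m+1)!=(m+1)m!$, and telescoping (with $u_n=0$ and $u_1=k-1$), the right side collapses to $\prod_{i=1}^{n-1}(u_i+1)=\prod_{i=1}^{n-1}(n+1-i-R_i)$; hence $N(\ell)=\bigl(k!\prod_i\ell_i!\bigr)^{-1}\prod_{i=1}^{n-1}(n+1-i-R_i)$.

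It remains to reindex. Putting $k_j:=\ell_{n-j}$ for $1\le j\le n-1$ reverses the profile; then $\prod_i c_{\ell_i}/\ell_i!=\prod_j c_{k_j}/k_j!$ and, with $i=n-j$, the factor $n+1-i-R_i$ equals $j+1-(k_1+\cdots+k_j)$, so the display for $A_{n,k}$ becomes the right side of~\eqref{Ank-Comtet}. The restriction $k_1+\cdots+k_j\le j$ is automatic: the tokens $n-j,\dots,n-1$ can be fed only at the $j$ steps $n-j+1,\dots,n$, so $k_1+\cdots+k_j=R_{n-j}\le j$; profiles violating this contribute $N=0$ and are harmlessly dropped, and when $R_{n-j}\le j$ every factor $j+1-(k_1+\cdots+k_j)$ is $\ge 1$, in agreement with~\eqref{Ank-Comtet}.

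I expect the main obstacle to be the bookkeeping in the first two steps: setting up the history model so that the multiplicities produced by the product rule are faithfully tracked (this is what makes ``coefficient $=$ number of histories'' legitimate) and then executing the greedy count and the telescoping without error. A more computational alternative is induction on $n$: verify that the right side of~\eqref{Ank-Comtet} satisfies the recurrence~\eqref{Ank-recu} with the boundary value $A_{n,n}=c^n$; there the delicate point is to control the weight polynomials $(2-k_1)(3-k_1-k_2)\cdots$ under $D$, in particular absorbing those terms in which raising an entry $k_i$ by $1$ breaks the subdiagonality condition.
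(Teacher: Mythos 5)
Your proposal is correct and is essentially the paper's own route in different notation: your differentiation histories are exactly the inversion sequences of Theorem~\ref{thm01} (record at step $t$ the index $e_t\in\{0,1,\dots,t-1\}$ of the object you differentiate, with $0$ standing for $\mathbf{f}$), so your first display is the grouping of \eqref{mainthm01} by type. Your greedy labelled placement count, after the telescoping of factorials, is the same computation the paper performs via the product $\binom{1}{k_1}\binom{2-k_1}{k_2}\cdots\binom{n-k_1-\cdots-k_{n-1}}{k}$ of binomial coefficients, so both the expansion step and the profile count coincide with the paper's derivation of \eqref{Ank-Comtet}.
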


The explicit formula~\eqref{Ank-Comtet} provides a method for calculating $(cD)^nf$, see Table~\ref{tab:1}.
However, to obtain the explicit coefficients in Table~\ref{tab:1}, a further step is needed.
In order to state the other expansion formulas for $A_{n,k}$, we
need to introduce several notations on partitions of integers.

A {\it partition} $\lambda=(\lambda_1,\lambda_2,\ldots,\lambda_{\ell})$ is a weakly decreasing sequence of nonnegative integers. Each $\lambda_i$ is called a {\it part} of $\lambda$.
The sum of the parts of a partition $\lambda$ is denoted by $|\lambda|$. If $|\lambda|=n$, then we say that $\lambda$ is a partition of $n$, also written as $\lambda\vdash n$.
We denote by $m_i$ the number of parts equal $i$. By using the multiplicities, we also denote $\lambda$ by $(1^{m_1}2^{m_2}\cdots n^{m_n})$.
The partition with all parts equal to $0$ is the empty partition. The {\it length} of $\lambda$, denoted $\ell(\lambda)$, is the maximum subscript $j$ such that $\lambda_j>0$.
The {\it Ferrers diagram} of $\lambda$ is graphical representation of $\lambda$ with $\lambda_i$ boxes in its $i$th row and the boxes are left-justified.
For a Ferrers diagram $\lambda\vdash n$ (we will often identify a partition with its Ferrers
diagram), a {\it (standard) Young tableau} (SYT, for short) of shape $\lambda$ is a filling of the $n$ boxes of $\lambda$ with the integers $1,2,\ldots, n$ such that each number is used, and all rows and columns are increasing (from left to right, and from bottom to top, respectively).
Given a Young tableau, we number its rows starting from the bottom and going above.
Let $\SYT(n)$ be the set of standard Young tableaux of size $n$.

For a partition $\lambda=(\lambda_1,\lambda_2,\ldots,\lambda_{\ell})$,
we define $$c_\lambda=\prod_{i=1}^{\ell}c_{\lambda_i},~c_\emptyset=1.$$
Let $\stirling{n}{k}=\#\{\pi\in\msn: \cyc(\pi)=k\}$ be the Stirling numbers of the first kind.
We now recall another expansion formula for $A_{n,k}$.
\begin{proposition}[{\cite{Benkart13,Briand20}}]\label{prop02}
Let $A_{n,k}$ be defined by~\eqref{Ank-def}.
For $n\geq 1$, there exist positive integers $a(n,\lambda)$ such that
\begin{equation}
A_{n,k}=\sum_{\lambda\vdash n-k} a(n,\lambda)c^{n-\ell(\lambda)}c_{\lambda},
\end{equation}
where $\lambda$ runs over all partitions of $n-k$. In particular, we have
\begin{align*}
&\sum_{\lambda\vdash n-k} a(n,\lambda)=\stirling{n}{k},~
a(n,1^{n-k})=\Stirling{n}{k},~
\sum_{\ell(\lambda)=n-k} a(n,\lambda)=\Eulerian{n}{k}.
\end{align*}
\end{proposition}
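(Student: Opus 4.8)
The plan is to induct on $n$ using only the recurrence~\eqref{Ank-recu} together with $A_{1,1}=c$ (and the convention $A_{n,k}=0$ for $k\notin\{1,\dots,n\}$, so that $A_{n+1,k}=cA_{n,k-1}+cDA_{n,k}$ holds for every $k$); throughout I regard $c=c_0,c_1,c_2,\dots$ as independent indeterminates and $D$ as the derivation with $Dc_j=c_{j+1}$, which is legitimate since~\eqref{Ank-def} is a polynomial identity. The first step is the bigrading: every monomial of $A_{n,k}$ has total degree $n$ in the $c_j$ and ``weight'' (sum of subscripts) equal to $n-k$. Both facts propagate through $A_{n+1,k}=cA_{n,k-1}+cDA_{n,k}$, since multiplication by $c$ raises the degree by $1$ and the weight by $0$, while $D$ preserves the degree and raises the weight by $1$. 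Writing a monomial as $c_0^{a_0}c_1^{a_1}c_2^{a_2}\cdots$ and letting $\lambda$ be the partition with $a_i$ parts equal to $i$ for $i\ge1$, one gets $|\lambda|=\sum_i ia_i=n-k$ and $\ell(\lambda)=\sum_{i\ge1}a_i=n-a_0$, so the monomial equals $c^{\,n-\ell(\lambda)}c_\lambda$; collecting like terms yields the asserted expansion with each $a(n,\lambda)$ a nonnegative integer. Positivity---that every $\lambda\vdash n-k$ occurs---follows by a second induction: for $k\ge2$ the summand $cA_{n,k-1}$ already reproduces inside $A_{n+1,k}$ every partition occurring in $A_{n,k-1}$ (both index partitions of $(n+1)-k$), and for $k=1$ one checks that $cD$ applied to $A_{n,1}$ reaches every $\lambda\vdash n$ --- those with a part equal to $1$ arise by differentiating a $c_0$-factor, and those with all parts $\ge2$ arise by incrementing a part of a partition of $n-1$ occurring in $A_{n,1}$.

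For the three ``in particular'' identities I substitute specific values for the $c_j$ into the recurrence; each substitution below is compatible with $D$ (the value assigned to $c_{j+1}$ is the $x$-derivative of the value assigned to $c_j$), so it may be carried out before or after applying $D$. Setting $c=e^x$ makes $c_j=e^x$ for all $j$, hence each monomial $c^{\,n-\ell(\lambda)}c_\lambda$ becomes $e^{nx}$ and $A_{n,k}\big|_{c=e^x}=\alpha(n,k)e^{nx}$ with $\alpha(n,k)=\sum_{\lambda\vdash n-k}a(n,\lambda)$; the recurrence becomes $\alpha(n+1,k)=\alpha(n,k-1)+n\,\alpha(n,k)$ with $\alpha(1,1)=1$, i.e.\ the recurrence of $\stirling{n}{k}$. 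Setting $c=x$ makes $c_1=1$ and $c_j=0$ for $j\ge2$, so only $\lambda=1^{n-k}$ survives and $A_{n,k}\big|_{c=x}=a(n,1^{n-k})\,x^{k}$; the recurrence becomes $\beta(n+1,k)=\beta(n,k-1)+k\,\beta(n,k)$ with $\beta(n,k):=a(n,1^{n-k})$ and $\beta(1,1)=1$, i.e.\ the recurrence of $\Stirling{n}{k}$ (this specialization also recovers~\eqref{Stirling-def}).

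The Eulerian identity is the delicate one, because the required specialization ``$c_j\mapsto1$ for $j\ge1$, with $c=c_0$ kept free'' is not the substitution of a function of $x$ and so must be treated formally. Writing $\widehat{g}$ for the image of a polynomial $g$ in the $c_j$ under this specialization, one has $\widehat{A_{n,k}}=\sum_{\lambda\vdash n-k}a(n,\lambda)\,c^{\,n-\ell(\lambda)}$, so the coefficient of $c^{k}$ in $\widehat{A}_n:=\sum_k\widehat{A_{n,k}}$ equals $\sum_{\ell(\lambda)=n-k}a(n,\lambda)$. The key computation is the identity
\begin{equation*}
\widehat{DA_{n,k}}=(1-c)\,\tfrac{d}{dc}\widehat{A_{n,k}}+n\,\widehat{A_{n,k}},
\end{equation*}
which I would derive by writing $DA_{n,k}=\sum_{j\ge0}(\partial_{c_j}A_{n,k})c_{j+1}$, specializing, and using that $\sum_{j\ge1}\partial_{c_j}c_\lambda$ specializes to $\ell(\lambda)$ together with the Euler-type relation $\sum_{\lambda\vdash n-k}a(n,\lambda)\,\ell(\lambda)\,c^{\,n-\ell(\lambda)}=n\,\widehat{A_{n,k}}-c\,\tfrac{d}{dc}\widehat{A_{n,k}}$. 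Summing the recurrence over $k$ then gives $\widehat{A}_{n+1}=(n+1)\,c\,\widehat{A}_n+c(1-c)\,\tfrac{d}{dc}\widehat{A}_n$, which with $\widehat{A}_1=c$ is precisely the recurrence and initial value determining $\sum_{k}\Eulerian{n}{k}x^{k}$ in the normalization of the Introduction (position $n$ counted as a descent); comparing coefficients of $c^{k}$ finishes the proof. I expect this last step---justifying the formal specialization and matching the resulting differential recurrence with the Eulerian one---to be the main obstacle, the remainder being bookkeeping driven by~\eqref{Ank-recu}.
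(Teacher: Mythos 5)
Your proposal is correct, and it is worth noting at the outset that the paper itself offers no proof of Proposition~\ref{prop02}: the statement is imported from \cite{Benkart13,Briand20}, so there is no internal argument to compare with step by step. Your route is a self-contained induction on the recurrence \eqref{Ank-recu}: the degree/weight bigrading and the no-cancellation positivity argument are sound, the specializations $c=e^x$ and $c=x$ correctly reduce the recurrence to the standard recurrences $\stirling{n+1}{k}=\stirling{n}{k-1}+n\stirling{n}{k}$ and $\Stirling{n+1}{k}=\Stirling{n}{k-1}+k\Stirling{n}{k}$, and the delicate point—the purely formal specialization $c_j\mapsto 1$ ($j\geq1$) with $c$ kept free—is handled correctly: since $D=\sum_{j\geq0}c_{j+1}\partial_{c_j}$ and $A_{n,k}$ is homogeneous of degree $n$, one indeed gets $\widehat{DA_{n,k}}=(1-c)\frac{d}{dc}\widehat{A_{n,k}}+n\widehat{A_{n,k}}$, and summing over $k$ yields exactly the recurrence \eqref{recuCnx} with $k=1$, i.e.\ $A_{n+1}(x)=(n+1)xA_n(x)+x(1-x)A_n'(x)$ with $A_1(x)=x$, which pins down the third identity in the paper's descent convention. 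This is genuinely different from what the paper (and the cited references) do: the paper's own machinery is combinatorial, with $a(n,\lambda)$ realized as the count $p_{k,\mu}$ of inversion sequences of a given type (Theorem~\ref{thm01}) or as a $G_Z$-weighted count of $k$-Young tableaux (Theorem~\ref{th:kmu:tableau}), from which the first identity is the rising-factorial identity displayed after Theorem~\ref{th:kmu:tableau}, the second is the paper's Stirling-number proposition via Frobenius' formula, and the third is essentially what the grammar argument in the proof of Theorem~\ref{th:An} establishes. The trade-off is that your argument needs nothing beyond \eqref{Ank-recu} and elementary recurrences, while the paper's approach buys explicit combinatorial meaning for each $a(n,\lambda)$; both are valid.
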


Motivated by Proposition~\ref{prop02},
in the next section we present the other main results of this paper. More importantly, we define the $g$-indexes of $k$-Young tableau and Young tableau.
\section{Inversion sequences and the $g$-index of Young tableau}\label{section3}
\subsection{Derivatives and inversion sequences}
\hspace*{\parindent}

An integer sequence $\mathbf{e}=(e_1, e_2, \ldots, e_n)$ is an {\it inversion sequence} of length $n$ if
$0\leq e_i<i$ for all $1\leq i\leq n$. Let $\I_n$
be the set of inversion sequences of length $n$.
There is a natural bijection $\psi$ between $\I_n$ and $\msn$ defined by
$\psi(\pi)=\mathbf{e}$,
where $e_i=\#\{j\mid 1\leq j<i~\text{and}~\pi(j)>\pi(i)\}$.

\begin{definition}
For $\mathbf{e}\in\I_n$, let $|\mathbf{e}|_j=\#\{i\mid e_i=j,\ 1\leq i\leq n\}$. Then
	we define $$\phi(\mathbf{e})=c \cdot c_{|\mathbf{e}|_1 } c_{|\mathbf{e}|_2} \cdots  c_{|\mathbf{e}|_{n-1}}\cdot  {\bf f}_{|\mathbf{e}|_0}.$$
\end{definition}

For example, take $n=9$ and $\mathbf{e}=(0,0,1,0,4,2,4,0,1)$, then
$|\mathbf{e}|_0=4, |\mathbf{e}|_1=2, |\mathbf{e}|_2=1, |\mathbf{e}|_3=0, |\mathbf{e}|_4=2$ and $|\mathbf{e}|_j=0$ for $5\leq j\leq 8$.
So that $\phi(\mathbf{e})=c \cdot  c_2 c_1 c c_2 c c c c \cdot f_4 = c^6 c_1 c_2^2  \cdot {\bf f}_4$.

We now present the second main result of this paper.
\begin{theorem}\label{thm01}
For $n\geq1$, we have
\begin{equation}\label{mainthm01}
	(cD)^nf = \sum_{\mathbf{e}\in \I_n} \phi(\mathbf{e}).
	\end{equation}
\end{theorem}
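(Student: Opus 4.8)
The plan is to prove the identity by induction on $n$, matching the combinatorial expansion $\sum_{\mathbf{e}\in\I_n}\phi(\mathbf{e})$ against the recursive structure of $(cD)^nf$ encoded in~\eqref{Ank-def} and~\eqref{Ank-recu}. The base case $n=1$ is immediate: $\I_1=\{(0)\}$, so $|\mathbf{e}|_0=1$ and all other $|\mathbf{e}|_j=0$, giving $\phi((0))=c\cdot\mathbf{f}_1=(cD)f$. For the inductive step I would take the expansion for $(cD)^nf$ as a sum over $\I_n$ and apply one more $cD$, then show that differentiating term-by-term and prepending a factor $c$ reproduces exactly the sum over $\I_{n+1}$.

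The key step is to track what happens to a single term $\phi(\mathbf{e})$ with $\mathbf{e}=(e_1,\dots,e_n)\in\I_n$ when we apply $cD$. Writing $\phi(\mathbf{e})=c\cdot c_{|\mathbf{e}|_1}\cdots c_{|\mathbf{e}|_{n-1}}\cdot\mathbf{f}_{|\mathbf{e}|_0}$, the operator $D$ acts by the product rule: it either hits the factor $\mathbf{f}_{|\mathbf{e}|_0}$, raising it to $\mathbf{f}_{|\mathbf{e}|_0+1}$, or it hits one of the $c_{|\mathbf{e}|_j}$ factors, raising that $c_j$ to $c_{j+1}$. I claim each such outcome, after multiplying by the outer $c$, corresponds to appending a new last entry $e_{n+1}$ to $\mathbf{e}$: choosing $e_{n+1}=0$ increases $|\mathbf{e}|_0$ by one and accounts for the term where $D$ hits $\mathbf{f}$; choosing $e_{n+1}=j$ for $1\le j\le n$ increases $|\mathbf{e}|_j$ by one, i.e.\ turns a factor $c_{|\mathbf{e}|_j}$ into $c_{|\mathbf{e}|_j+1}$, matching the term where $D$ differentiates that factor. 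The multiplicity bookkeeping is exactly right: if $\mathbf{e}$ has $m=|\mathbf{e}|_j$ entries equal to $j$, then the monomial contains $c_m$ with multiplicity one among those positions, and the product rule produces $m$ copies of the term with $c_m\to c_{m+1}$ — but appending $e_{n+1}=j$ also has the effect of counting the new inversion sequence once, while the number of inversion sequences $\mathbf{e}'\in\I_{n+1}$ restricting to a given multiset profile differs by the number of positions; one must check these combinatorial factors agree. Since $\I_{n+1}=\{(\mathbf{e},j): \mathbf{e}\in\I_n,\ 0\le j\le n\}$ and the map $\mathbf{e}'\mapsto(\mathbf{e}',$ last entry$)$ is a bijection, the cleanest route is to organize the sum over $\I_{n+1}$ as $\sum_{\mathbf{e}\in\I_n}\sum_{j=0}^{n}\phi(\mathbf{e},j)$ and verify that the inner sum equals $cD\,\phi(\mathbf{e})$.

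The main obstacle I anticipate is precisely this last verification: showing that
$$
cD\,\phi(\mathbf{e}) \;=\; \sum_{j=0}^{n}\phi\bigl((e_1,\dots,e_n,j)\bigr)
$$
for each fixed $\mathbf{e}\in\I_n$. The subtlety is that the product rule on $D\,\phi(\mathbf{e})$ collapses equal factors — if $|\mathbf{e}|_j=|\mathbf{e}|_{j'}$ the two monomials $c_{|\mathbf{e}|_j}$ and $c_{|\mathbf{e}|_{j'}}$ are literally the same symbol — whereas the sum on the right keeps the contributions indexed by distinct values $j$. One has to argue that when $D$ hits ``the'' factor $c_m$ (where $m=|\mathbf{e}|_j$ for possibly several $j$), the coefficient $m$ from the power rule correctly distributes as one unit to each of the $j$'s with $|\mathbf{e}|_j=m$; simultaneously, appending $e_{n+1}=j$ turns $c_m$ into $c_{m+1}$ and the surviving factors at the other values $j'$ with $|\mathbf{e}|_{j'}=m$ stay as $c_m$, which is consistent because after the append those other values still have multiplicity $m$. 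Carefully writing $\phi(\mathbf{e})$ as $c\cdot\prod_{m\ge1}c_m^{\#\{j\ge1:\,|\mathbf{e}|_j=m\}}\cdot\mathbf{f}_{|\mathbf{e}|_0}$ and differentiating this explicit monomial, then comparing with the analogous product for each $(\mathbf{e},j)$, makes the matching transparent and completes the induction. Once this local identity is in hand, summing over $\mathbf{e}\in\I_n$ and invoking the inductive hypothesis $(cD)^nf=\sum_{\mathbf{e}\in\I_n}\phi(\mathbf{e})$ gives $(cD)^{n+1}f=cD\sum_{\mathbf{e}\in\I_n}\phi(\mathbf{e})=\sum_{\mathbf{e}\in\I_n}\sum_{j=0}^n\phi(\mathbf{e},j)=\sum_{\mathbf{e}'\in\I_{n+1}}\phi(\mathbf{e}')$, as desired.
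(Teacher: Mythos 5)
Your proposal is correct and follows essentially the same route as the paper: induction on $n$, where applying $cD$ corresponds to appending a new last entry $e_{n+1}$ to each $\mathbf{e}\in\I_n$, with the cases $e_{n+1}=0$, $1\le e_{n+1}\le n-1$, and $e_{n+1}=n$ matching the three ways the derivative can act. The only difference is cosmetic: the paper aggregates the terms by $k=|\mathbf{e}|_0$ and invokes the recurrence $A_{n+1,k}=cA_{n,k-1}+cDA_{n,k}$, whereas you verify the pointwise identity $cD\,\phi(\mathbf{e})=\sum_{j=0}^{n}\phi\bigl((e_1,\dots,e_n,j)\bigr)$ directly (which does hold, with the multiplicity $(1+z_0)$ from the $c$-factors accounted for by the choices $e_{n+1}=n$ or $e_{n+1}=j$ with $|\mathbf{e}|_j=0$), thereby making explicit the step the paper labels ``routine to check.''
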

\begin{proof}
When $n=1$, we have $\I_1=\{0\}$ and $\phi(0)=c{\bf f}_1$.
When $n=2$, we have $\I_2=\{00,01\}$. Note that $\phi(00)=c \cdot c  \cdot  f_2$ and $\phi(01)=c \cdot c_1  \cdot  {\bf f}_1$.
Hence~\eqref{mainthm01} is valid for $n=1,2$.
Assume that~\eqref{mainthm01} holds for $n$. Let $\I_{n,k}=\{\mathbf{e}\in\I_n: |\mathbf{e}|_0=k\}$.
Then for any $\mathbf{e}\in \I_{n,k}$, we have
	$$\phi(\mathbf{e})=c \cdot c_{|\mathbf{e}|_1 } \cdot c_{|\mathbf{e}|_2} \cdots  c_{|\mathbf{e}|_{n-1}}\cdot  {\bf f}_k.$$
Let $\mathbf{e}'$ be obtained from $\mathbf{e}=(e_1,e_2,\ldots,e_n)$ by appending $e_{n+1}$. We
distinguish three cases:
\begin{itemize}
	\item [\rm ($i$)] If $e_{n+1}=0$, then $\phi(\mathbf{e'})=c \cdot c_{|\mathbf{e}|_1 } \cdot c_{|\mathbf{e}|_2} \cdots  c_{|\mathbf{e}|_{n-1}}\cdot c\cdot  {\bf f}_{k+1}$;
	\item [\rm ($ii$)]  If $e_{n+1}=i$ and $1\leq i\leq n-1$, then $\phi(\mathbf{e'})=c\cdot  c_{|\mathbf{e}|_1 } \cdot c_{|\mathbf{e}|_2}  \cdots c_{|\mathbf{e}|_{i}+1} \cdots  c_{|\mathbf{e}|_{n-1}}\cdot c\cdot {\bf f}_k$;
	\item [\rm ($iii$)]  If $e_{n+1}=n$, then $\phi(\mathbf{e'})=c\cdot c_{|\mathbf{e}|_1 } \cdot c_{|\mathbf{e}|_2} \cdots  c_{|\mathbf{e}|_{n-1}}\cdot c_1\cdot {\bf f}_k$.
\end{itemize}
It is routine to check that the first case accounts for the term $cA_{n,k-1}$ and the last two cases account for the term $cDA_{n,k}$.
Then $\sum_{\mathbf{e}\in I_{n+1,k}} \phi(\mathbf{e}) = (cA_{n,k-1} + cDA_{n,k}) \mathbf{f}_k= A_{n+1,k}\mathbf{f}_k$, which follows from~\eqref{Ank-recu}.
	This completes the proof.
\end{proof}

\begin{example}
When $n=3$, the correspondence between $\mathbf{e}\in\I_3$ and $\phi(\mathbf{e})$ is illustrated as follows:
\begin{equation*}
\begin{matrix}
	\mathbf{e} & 000 & 001 & 002 & 010 & 011 & 012   \\
|\mathbf{e}|_0~|\mathbf{e}|_1~|\mathbf{e}|_2& 300 & 210 & 201 & 210 & 120 & 111   \\
	\phi(\mathbf{e})& ccc {\bf f}_3 & cc_1c {\bf f}_2& ccc_1 {\bf f}_2 & cc_1c{\bf f}_2 & c c_2c  {\bf f}_1 & cc_1c_1 {\bf f}_1
\end{matrix}
\end{equation*}
So that
$$
	\sum_{\mathbf{e}\in \I_3} \phi(\mathbf{e})=(cc_1^2 +c^2c_2){\bf f}_1 + (3c^2c_1) {\bf f}_2 + c^3 {\bf f}_3.
$$
\end{example}

\begin{example}
When $c=x$, we have $c_0=x, c_1=Dx=1$ and $c_i=0$ for $i\geq 2$.
Then $\phi(\mathbf{e})\neq 0$ unless $|\mathbf{e}|_i=0$ or
$|\mathbf{e}|_i=1$ for all $i\geq 1$.
In this case,
let $k=|\mathbf{e}|_0$,
then $$n-k= \#\{j : |\mathbf{e}|_j=1, 1\leq j \leq n-1\},~(n-1)-(n-k)=\#\{j : |\mathbf{e}|_j=0, 1\leq j \leq n-1\}.$$
Thus
	$\phi(\mathbf{e})=c c_{|\mathbf{e}|_1} \cdots c_{|\mathbf{e}|_{n-1}} {\bf f}_{|\mathbf{e}|_0}=x^{k}{\bf f}_{k}$.
It follows from~\eqref{Stirling-def} that
$(xD)^nf=\sum_{k=0}^{n} \Stirling{n}{k} x^{k}{\bf f}_{k}$.
Hence
\begin{equation*}\label{Stirling}
\Stirling{n}{k}=\#\{\mathbf{e}\in \I_n : |\mathbf{e}|_0=k, |\mathbf{e}|_j=0~\text{or}~1 \text{\ for any $1\leq j\leq n-1$}\}.
\end{equation*}
\end{example}

We can derive Comtet's formula~\eqref{Ank-Comtet} by using Theorem~\ref{thm01}. For $\mathbf{e}\in \I_n$, let
$k=|\mathbf{e}|_0$ and $k_i=|\mathbf{e}|_{n-i}$ for $1\leq i\leq n-1$. Note that
$$k_1+k_2+\cdots+k_{n-1}=n-k$$ and $k_1+\cdots +k_j\leq j$ for each $j$.
Therefore, the number of such $\mathbf{e}$ is equal to
\begin{align*}
	&\binom{1}{k_1}	\binom{2-k_1}{k_2} \binom{3-k_1-k_2} {k_3} \cdots
	\binom{n-k_1-k_2-\cdots -k_{n-1}}{k} \\
	&=\frac{(2-k_1)(3-k_1-k_2)\cdots(n-k_1-k_2-\cdots -k_{n-1})}{k! k_1! k_2! \cdots k_{n-1}!}.
\end{align*}
\subsection{Derivatives and $k$-Young tableaux}
\hspace*{\parindent}

Since the $c_{k_1}$, $c_{k_2}, \ldots, c_{k_{n-1}}$ are commutative, we have to group the
terms in \eqref{Ank-Comtet} which produce the same product $c_{k_1}c_{k_2}\cdots c_{k_{n-1}}$.
We say that the {\it type} of $n$ is a pair $(k, \mu)$, denoted by $(k, \mu)\vdash n$, where $k\in [n]$ and
$\mu=(\mu_1, \ldots,\mu_{n-1})$ is
a partition of $n-k$, i.e., $\mu$ is written up to $n-1$ terms by appending $0$'s
at the end. Let $(k, \mu)$ be a type of $n$. We define
$$\set(\mu)=\{\mu_j \mid 1\leq j \leq n-1\},~|\mu|_j=\#\{i\mid \mu_i=j, 1\leq i\leq n-1\}.$$

Let $(|\mathbf{e}|_0, \mu(\mathbf{e}))$ be the {\it type} of $\mathbf{e}\in \I_n$, where $\mu(\mathbf{e})$ is the decreasing order of $|\mathbf{e}|_1, \ldots, |\mathbf{e}|_{n-1}$.
For each type $(k,\mu)$  of $n$,
let $p_{k, \mu}$ be the number of inversion sequences of type $(k,\mu)$. It follows from Theorem~\ref{thm01}
that
\begin{equation}\label{eq:cD:p}
	(cD)^n f= \sum_{(k, \mu)\vdash n} p_{k, \mu} c c_{\mu_1}c_{\mu_2}\cdots c_{\mu_{n-1}}  \mathbf{f}_k,
\end{equation}
where the summation is taken over all types $(k, \mu)$ of $n$.

\begin{example}
For $1\leq n=k+|\mu|\leq 3$, the numbers $p_{k, \mu}$ are $p_{1,(0)} = 1$, $p_{2,(0)} = 1$, $p_{1,(1)} = 1$,
$p_{3,(0,0)} = 1$, $p_{2,(1,0)} = 3$, ~$p_{1,(2,0)} = 1$ and $p_{1,(1,1)} = 1$.
\end{example}

\begin{lemma}\label{lemma:pml}
By convention, set $p_{0, \mu}=0$.
If $(k, \mu)=(1, (1,1,\ldots, 1))$, then let $p_{k, \mu}=1$.
For other type $(k, \mu)$ of $n$, we have
\begin{equation}	\label{eq:p}
p_{k,\mu}= \sum_{j \in \set(\mu)\setminus\{0\}} (|\mu|_{j-1}+1) p_{k, \mu^{(j)}} + p_{k-1, \mu^{(0)}},
\end{equation}	
where $\mu^{(j)}$ is obtained from $\mu$ by replacing the last occurrences of the part $j$ by $j-1$ and by deleting the last $0$ and
$\mu^{(0)}$ is obtained from $\mu$ by  deleting the last $0$. Thus $(k, \mu^{(j)})\vdash (n-1)$ and $(k-1, \mu^{(0)})\vdash (n-1)$.
\end{lemma}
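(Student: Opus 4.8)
The plan is to prove \eqref{eq:p} by a deletion argument on the last entry. Recall that $p_{k,\mu}$ counts the inversion sequences $\mathbf{e}=(e_1,\ldots,e_n)\in\I_n$ of type $(k,\mu)$, and that since $e_t\le t-1$ the only entry of such an $\mathbf{e}$ that can equal $n-1$ is $e_n$; thus $|\mathbf{e}|_{n-1}\in\{0,1\}$, with $|\mathbf{e}|_{n-1}=1$ precisely when $e_n=n-1$. I would classify the sequences of type $(k,\mu)$ according to the value $e_n\in\{0,1,\ldots,n-1\}$ and analyse the effect of deleting $e_n$, which produces $\mathbf{e}'=(e_1,\ldots,e_{n-1})\in\I_{n-1}$ with $|\mathbf{e}'|_0=|\mathbf{e}|_0-[e_n=0]$ and $|\mathbf{e}'|_i=|\mathbf{e}|_i-[e_n=i]$ for $i\ge1$.

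The case $e_n=0$ is the easy one: then $|\mathbf{e}'|_0=k-1$, the slot $n-1$ of $\mathbf{e}$ carries the value $0$ and simply disappears, all other multiplicities are unchanged, so $\mathbf{e}'$ has type $(k-1,\mu^{(0)})$; conversely each $\mathbf{e}'\in\I_{n-1}$ of that type extends uniquely by appending $0$, which accounts for the summand $p_{k-1,\mu^{(0)}}$. For $e_n=i$ with $1\le i\le n-1$, I would set $j=|\mathbf{e}|_i$, a positive part of $\mu$, and check by a short multiset computation that deleting $e_n$ lowers the multiplicity of $i$ from $j$ to $j-1$ and removes exactly one slot of value $0$ --- the slot $n-1$ when $i<n-1$, or the slot $i=n-1$ itself (whose multiplicity $j=1$ drops to $0$) when $i=n-1$ --- so that $\mathbf{e}'$ has type $(k,\mu^{(j)})$. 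In the reverse direction, for a fixed $\mathbf{e}'$ of type $(k,\mu^{(j)})$ the number of positions $i$ for which appending $e_n=i$ yields a sequence of type $(k,\mu)$ is the number of $i\le n-2$ with $|\mathbf{e}'|_i=j-1$, increased by $1$ when $j=1$ on account of the choice $i=n-1$; this count equals $|\mu^{(j)}|_{j-1}+[j=1]$, which I claim equals $|\mu|_{j-1}+1$ (verified by splitting into $j\ge2$ and $j=1$ using the definition of $\mu^{(j)}$). Summing over $j\in\set(\mu)\setminus\{0\}$ and adding the $e_n=0$ contribution gives \eqref{eq:p}.

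I expect the only real subtlety --- the main obstacle --- to be this last off-by-one at the boundary position $i=n-1$: for $j\ge2$ that position is inadmissible, while for $j=1$ it is admissible, and one must see that the identity $|\mu^{(j)}|_{j-1}+[j=1]=|\mu|_{j-1}+1$ reconciles both, the point being that for $j=1$ the passage $\mu\mapsto\mu^{(1)}$ creates a part $0$ (replacing a $1$) and then deletes a part $0$, leaving $|\mu|_0$ unchanged. Two degenerate cases should be disposed of at the outset: since $e_1=0$ always, $|\mathbf{e}|_0\ge1$, so there are no inversion sequences with $|\mathbf{e}|_0=0$ and the convention $p_{0,\mu}=0$ is consistent; and a type $(k,\mu)\vdash n$ has $\mu$ with no part equal to $0$ only when $(k,\mu)=(1,(1,1,\ldots,1))$, in which case $\mathbf{e}=(0,1,2,\ldots,n-1)$ is the unique such inversion sequence, $p_{k,\mu}=1$ is taken as a definition, and \eqref{eq:p} (which would involve the undefined $\mu^{(0)}$) is not asserted --- so in every case where \eqref{eq:p} is claimed the partitions $\mu^{(0)}$ and $\mu^{(j)}$ are well defined.
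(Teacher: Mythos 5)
Your proof is correct and takes essentially the same route as the paper's: delete the last entry $e_n$, with $e_n=0$ giving a reversible map onto sequences of type $(k-1,\mu^{(0)})$ and $e_n=i\geq 1$ with $|\mathbf{e}|_i=j$ giving a $(|\mu|_{j-1}+1)$-to-one map onto sequences of type $(k,\mu^{(j)})$. Your explicit handling of the boundary choice $i=n-1$ (admissible only for $j=1$) and the identity $|\mu^{(j)}|_{j-1}+[j=1]=|\mu|_{j-1}+1$ simply spells out the inverse count that the paper states more tersely.
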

\begin{proof}
Take an inversion sequence $\mathbf{e}\in \I_n$ of type $(k, \mu)$.
Let $\mathbf{e}'  =(e_1,e_2,\ldots,e_{n-1})\in \I_{n-1}$ be obtained from $\mathbf{e}$ by deleting the last $e_{n}$.
	If $e_n=0$, then, the type of $\mathbf{e}'$ is $(k-1, \mu^{(0)})$.
	This operation is reversible.
	If $e_n=i$ ($1\leq i\leq n-1$) and $|\mathbf{e}|_i=j\in \set(\mu)\setminus\{0\}$, then the type of $\mathbf{e}'$ is $(k, \mu^{(j)})$.
	In this case, the operation is not reversible. We have exactly $(|\mu|_{j-1}+1)$ ways to do the inverses.
	In fact we can append $e_n=i'\not=i$ at the end of $\mathbf{e}'$ with the condition of $|\mathbf{e}|_i-1=j-1=|\mathbf{e}|_{i'}$ to obtain an inversion sequence in $\I_n$ of type $(k, \mu)$.
\end{proof}

As an illustration of~\eqref{eq:p}, in order to get inversion sequences of type $(k,\mu)=(3,(2,1,1,0,0,0))$, we distinguish three cases:
\begin{itemize}
	\item [(i)] For each $\mathbf{e}\in \I_6$ that counted by $p_{2, (2,1,1,0,0)}$, we can get exactly one inversion sequence of type $(k,\mu)$ by appending $e_7=0$ at the end of $\mathbf{e}$;
	\item [(ii)] Let $\mathbf{e}\in \I_6$ be an inversion sequence counted by $p_{3, (1,1,1,0,0)}$. If $|\mathbf{e}|_{i}=1$
then we can append $e_7=i$ at the end of $\mathbf{e}$. As we have three choices for $i$, we get the term  $3 p_{3, (1,1,1,0,0)}$;
	\item [(iii)] Let $\mathbf{e}\in \I_6$ be an inversion sequence counted by $p_{3, (2,1,0,0,0)}$.
If $|\mathbf{e}|_{i}=0$ or $i=6$
then we can append $e_7=i$ at the end of $\mathbf{e}$. As we have four choices for $i$, we get the term  $ 4 p_{3, (2,1,0,0,0)}$.
\end{itemize}
Repeatedly, it is routine to verify that
$$
p_{3,(2,1,1,0,0,0)} = 4 p_{3, (2,1,0,0,0)} + 3 p_{3, (1,1,1,0,0)} + p_{2, (2,1,1,0,0)}
=4\times 120 + 3 \times 90 + 146= 896.$$

Each type $(k, \mu)$ of $n$ can be represented by a picture which contains
$k$ boxes in the bottom row, and the Young diagram of the partition $\mu$ in
the top. Such picture is called a {\it $(k,\mu)$-diagram}. See Figure \ref{fig:Z:diag} (left diagram).
\begin{definition}
Let $(k, \mu)$ be a type of $n$.
A {\it $k$-Young tableau} $Z$ of shape $(k,\mu)$ is a filling
	of the $n$ boxes of the $(k, \mu)$-diagram by the integers $1,2,\ldots,n$ such that
(i) each number is used,
(ii)
all rows and columns in the top Young diagram are increasing (from left to right, and from bottom to top, respectively),
	(iii)	
the bottom row becomes an increasing sequence of lenght $k$, starting with $1$.
\end{definition}
The filling of the
	top Young diagram of the partition $\mu$ is called the {\it top Young tableau} of the $k$-Young tableau.
Unlike the ordinary Young tableau, there is no condition between the bottom row and the top Young tableau.
We always put a special column of $n$ boxes at the left of $k$-Young tableaux, and labelled by the integers $1,2,\ldots,n$  from bottom to top.
See Figure \ref{fig:Z:diag} (right diagram) for an example.
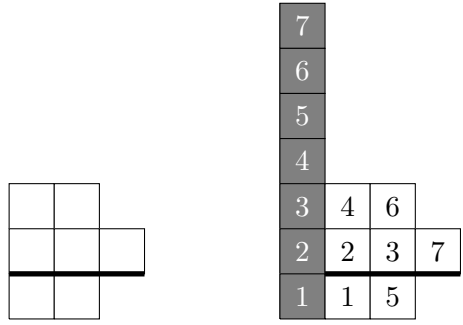
\begin{figure}[tbp]
\begin{center}
\begin{tikzpicture} % <<<
\fill [gray!0](0.0000,0.6000)--(0.0000,1.2000)--(0.6000,1.2000)--(0.6000,0.6000)--(0.0000,0.6000);
\draw [gray!200](0.0000,0.6000)--(0.0000,1.2000)--(0.6000,1.2000)--(0.6000,0.6000)--(0.0000,0.6000);
\draw [gray!200] (0.3000, 0.9000) node [] {};
\fill [gray!0](0.6000,0.6000)--(0.6000,1.2000)--(1.2000,1.2000)--(1.2000,0.6000)--(0.6000,0.6000);
\draw [gray!200](0.6000,0.6000)--(0.6000,1.2000)--(1.2000,1.2000)--(1.2000,0.6000)--(0.6000,0.6000);
\draw [gray!200] (0.9000, 0.9000) node [] {};
None
\fill [gray!0](0.0000,1.8000)--(0.0000,2.4000)--(0.6000,2.4000)--(0.6000,1.8000)--(0.0000,1.8000);
\draw [gray!200](0.0000,1.8000)--(0.0000,2.4000)--(0.6000,2.4000)--(0.6000,1.8000)--(0.0000,1.8000);
\draw [gray!200] (0.3000, 2.1000) node [] {};
\fill [gray!0](0.6000,1.8000)--(0.6000,2.4000)--(1.2000,2.4000)--(1.2000,1.8000)--(0.6000,1.8000);
\draw [gray!200](0.6000,1.8000)--(0.6000,2.4000)--(1.2000,2.4000)--(1.2000,1.8000)--(0.6000,1.8000);
\draw [gray!200] (0.9000, 2.1000) node [] {};
\fill [gray!0](0.0000,1.2000)--(0.0000,1.8000)--(0.6000,1.8000)--(0.6000,1.2000)--(0.0000,1.2000);
\draw [gray!200](0.0000,1.2000)--(0.0000,1.8000)--(0.6000,1.8000)--(0.6000,1.2000)--(0.0000,1.2000);
\draw [gray!200] (0.3000, 1.5000) node [] {};
\fill [gray!0](0.6000,1.2000)--(0.6000,1.8000)--(1.2000,1.8000)--(1.2000,1.2000)--(0.6000,1.2000);
\draw [gray!200](0.6000,1.2000)--(0.6000,1.8000)--(1.2000,1.8000)--(1.2000,1.2000)--(0.6000,1.2000);
\draw [gray!200] (0.9000, 1.5000) node [] {};
\fill [gray!0](1.2000,1.2000)--(1.2000,1.8000)--(1.8000,1.8000)--(1.8000,1.2000)--(1.2000,1.2000);
\draw [gray!200](1.2000,1.2000)--(1.2000,1.8000)--(1.8000,1.8000)--(1.8000,1.2000)--(1.2000,1.2000);
\draw [gray!200] (1.5000, 1.5000) node [] {};
None
\draw [gray!200, line width=2pt](0.0000,1.2000)--(1.8000,1.2000);
None
\fill [gray!100](3.6000,4.2000)--(3.6000,4.8000)--(4.2000,4.8000)--(4.2000,4.2000)--(3.6000,4.2000);
\draw [gray!200](3.6000,4.2000)--(3.6000,4.8000)--(4.2000,4.8000)--(4.2000,4.2000)--(3.6000,4.2000);
\draw [gray!0] (3.9000, 4.5000) node [] {7};
\fill [gray!100](3.6000,3.6000)--(3.6000,4.2000)--(4.2000,4.2000)--(4.2000,3.6000)--(3.6000,3.6000);
\draw [gray!200](3.6000,3.6000)--(3.6000,4.2000)--(4.2000,4.2000)--(4.2000,3.6000)--(3.6000,3.6000);
\draw [gray!0] (3.9000, 3.9000) node [] {6};
\fill [gray!100](3.6000,3.0000)--(3.6000,3.6000)--(4.2000,3.6000)--(4.2000,3.0000)--(3.6000,3.0000);
\draw [gray!200](3.6000,3.0000)--(3.6000,3.6000)--(4.2000,3.6000)--(4.2000,3.0000)--(3.6000,3.0000);
\draw [gray!0] (3.9000, 3.3000) node [] {5};
\fill [gray!100](3.6000,2.4000)--(3.6000,3.0000)--(4.2000,3.0000)--(4.2000,2.4000)--(3.6000,2.4000);
\draw [gray!200](3.6000,2.4000)--(3.6000,3.0000)--(4.2000,3.0000)--(4.2000,2.4000)--(3.6000,2.4000);
\draw [gray!0] (3.9000, 2.7000) node [] {4};
\fill [gray!100](3.6000,1.8000)--(3.6000,2.4000)--(4.2000,2.4000)--(4.2000,1.8000)--(3.6000,1.8000);
\draw [gray!200](3.6000,1.8000)--(3.6000,2.4000)--(4.2000,2.4000)--(4.2000,1.8000)--(3.6000,1.8000);
\draw [gray!0] (3.9000, 2.1000) node [] {3};
\fill [gray!100](3.6000,1.2000)--(3.6000,1.8000)--(4.2000,1.8000)--(4.2000,1.2000)--(3.6000,1.2000);
\draw [gray!200](3.6000,1.2000)--(3.6000,1.8000)--(4.2000,1.8000)--(4.2000,1.2000)--(3.6000,1.2000);
\draw [gray!0] (3.9000, 1.5000) node [] {2};
\fill [gray!100](3.6000,0.6000)--(3.6000,1.2000)--(4.2000,1.2000)--(4.2000,0.6000)--(3.6000,0.6000);
\draw [gray!200](3.6000,0.6000)--(3.6000,1.2000)--(4.2000,1.2000)--(4.2000,0.6000)--(3.6000,0.6000);
\draw [gray!0] (3.9000, 0.9000) node [] {1};
None
\fill [gray!0](4.2000,0.6000)--(4.2000,1.2000)--(4.8000,1.2000)--(4.8000,0.6000)--(4.2000,0.6000);
\draw [gray!200](4.2000,0.6000)--(4.2000,1.2000)--(4.8000,1.2000)--(4.8000,0.6000)--(4.2000,0.6000);
\draw [gray!200] (4.5000, 0.9000) node [] {1};
\fill [gray!0](4.8000,0.6000)--(4.8000,1.2000)--(5.4000,1.2000)--(5.4000,0.6000)--(4.8000,0.6000);
\draw [gray!200](4.8000,0.6000)--(4.8000,1.2000)--(5.4000,1.2000)--(5.4000,0.6000)--(4.8000,0.6000);
\draw [gray!200] (5.1000, 0.9000) node [] {5};
None
\fill [gray!0](4.2000,1.8000)--(4.2000,2.4000)--(4.8000,2.4000)--(4.8000,1.8000)--(4.2000,1.8000);
\draw [gray!200](4.2000,1.8000)--(4.2000,2.4000)--(4.8000,2.4000)--(4.8000,1.8000)--(4.2000,1.8000);
\draw [gray!200] (4.5000, 2.1000) node [] {4};
\fill [gray!0](4.8000,1.8000)--(4.8000,2.4000)--(5.4000,2.4000)--(5.4000,1.8000)--(4.8000,1.8000);
\draw [gray!200](4.8000,1.8000)--(4.8000,2.4000)--(5.4000,2.4000)--(5.4000,1.8000)--(4.8000,1.8000);
\draw [gray!200] (5.1000, 2.1000) node [] {6};
\fill [gray!0](4.2000,1.2000)--(4.2000,1.8000)--(4.8000,1.8000)--(4.8000,1.2000)--(4.2000,1.2000);
\draw [gray!200](4.2000,1.2000)--(4.2000,1.8000)--(4.8000,1.8000)--(4.8000,1.2000)--(4.2000,1.2000);
\draw [gray!200] (4.5000, 1.5000) node [] {2};
\fill [gray!0](4.8000,1.2000)--(4.8000,1.8000)--(5.4000,1.8000)--(5.4000,1.2000)--(4.8000,1.2000);
\draw [gray!200](4.8000,1.2000)--(4.8000,1.8000)--(5.4000,1.8000)--(5.4000,1.2000)--(4.8000,1.2000);
\draw [gray!200] (5.1000, 1.5000) node [] {3};
\fill [gray!0](5.4000,1.2000)--(5.4000,1.8000)--(6.0000,1.8000)--(6.0000,1.2000)--(5.4000,1.2000);
\draw [gray!200](5.4000,1.2000)--(5.4000,1.8000)--(6.0000,1.8000)--(6.0000,1.2000)--(5.4000,1.2000);
\draw [gray!200] (5.7000, 1.5000) node [] {7};
None
\draw [gray!200, line width=2pt](4.2000,1.2000)--(6.0000,1.2000);
None
\end{tikzpicture} % >>>
\end{center}
\caption{$(k=2, \mu=(3,2,0,0,0,0))$-diagram and $k$-Young tableau of shape $(k,\mu)$}
\label{fig:Z:diag}
\end{figure}

\begin{definition}\label{def-g-index01}
Let $Z$ be a $k$-Young tableau of shape $(k,\mu)$, where $k+|\mu|=n$.
For each $v\in [n]$, suppose that $v$ is in the box $(i,j)$
of the top Young diagram,
we define the {\it $g$-index} of $v$, denoted by $g_{Z}(v)$, to be the number of boxes $(i-1,j')$  such that $j'\geq j$ and the letter in this box is less than or equal to $v$ (see Figure \ref{fig:Ytab}, right diagram).
If $v$ is in the bottom row, then we define $g_Z(v)=1$.
The $g$-index of $Z$ is given by $G_Z=g_Z(1) g_Z(2) \cdots g_Z(n)$.
\end{definition}

For the $k$-Young tableau given in Figure~\ref{fig:Z:diag} (right diagram), we have
$$
g_Z(1)=1, \
g_Z(2)=1, \
g_Z(3)=1, \
g_Z(4)=2, \
g_Z(5)=1, \
g_Z(6)=1, \
g_Z(7)=2.
$$

The third main result of this paper is given as follows.
\begin{theorem}\label{th:kmu:tableau}
If $(k, \mu)\vdash n$, then we have
	\begin{equation}	\label{eq:kmu:tableau}
	p_{k,\mu}
		= \sum_{Z} G_Z
	\end{equation}	
	where the summation is taken over all $k$-Young tableaux of shape $(k,\mu)$.
\end{theorem}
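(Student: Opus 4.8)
The plan is to prove \eqref{eq:kmu:tableau} by induction on $n=k+|\mu|$, showing that $S_{k,\mu}:=\sum_{Z}G_Z$ satisfies the recurrence of Lemma~\ref{lemma:pml} together with the same initial data as $p_{k,\mu}$. If $n=1$, or if $(k,\mu)=(1,(1,1,\dots,1))$, then there is exactly one $k$-Young tableau of shape $(k,\mu)$ --- the bottom box $1$, topped (in the second case) by the single column $2,3,\dots,n$ --- and inspecting Definition~\ref{def-g-index01} shows each of its $g$-indices equals $1$, so $S_{k,\mu}=1=p_{k,\mu}$. From now on assume $n\ge2$ and that $(k,\mu)$ is not the exceptional type.

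For a $k$-Young tableau $Z$ of shape $(k,\mu)$, look at the box carrying the largest entry $n$. Since all rows and columns increase, this box is removable: either it is the last box of the bottom row --- which forces $k\ge 2$ --- or it is the corner box of $\mu$ ending the last row of length $j$, for some $j\in\set(\mu)\setminus\{0\}$. I would split $S_{k,\mu}$ according to these cases and in each case delete the box of $n$. When $n$ lies in the bottom row, the result is a $(k-1)$-Young tableau of shape $(k-1,\mu^{(0)})$, and the deletion is a bijection onto all such tableaux (its inverse appends a box $n$ to the bottom row). When $n$ ends the last row of length $j$, the result is a $k$-Young tableau of shape $(k,\mu^{(j)})$, and for each fixed $j$ the deletion is a bijection onto all such tableaux (its inverse places a box $n$ at the unique cell whose addition turns $\mu^{(j)}$ back into $\mu$).

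It remains to track the weights $G_Z$ under these bijections, which rests on two observations. First, deleting the box of $n$ does not change $g_Z(v)$ for any $v<n$: no box with entry $n$ can ever be counted in the definition of $g_Z(v)$, since that entry exceeds $v$, and when $n$ sat at a corner of $\mu$ there is in any case no surviving box of the column immediately to its right at that height. Second, $g_Z(n)$ does not depend on the filling: in Definition~\ref{def-g-index01} the clause ``the letter in this box is $\le v$'' is automatic for $v=n$, so $g_Z(n)$ becomes a purely geometric count of boxes in the adjacent column (or, for a box in the first column, of boxes of the distinguished left column labelled $1,\dots,n$) lying weakly above the box of $n$. Carrying out that count gives $g_Z(n)=1$ when $n$ is in the bottom row, and, when $n$ ends the last row of length $j$, the number of boxes of the preceding column lying weakly above the corner; writing the column heights of $\mu$ in terms of the multiplicities $|\mu|_i$ --- and, for $j=1$, using $|\mu|_0=(n-1)-\ell(\mu)$ --- this number equals $|\mu|_{j-1}+1$. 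Combining the bijections with these two facts gives
\[
S_{k,\mu}=S_{k-1,\mu^{(0)}}+\sum_{j\in\set(\mu)\setminus\{0\}}\bigl(|\mu|_{j-1}+1\bigr)S_{k,\mu^{(j)}},
\]
where the first summand is read as $0$ when $k=1$ (the one-box bottom row then cannot contain $n\ge2$). This is exactly the recurrence of Lemma~\ref{lemma:pml}, so the induction closes.

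The step I expect to be the real obstacle is this last computation: verifying that the geometric count defining $g_Z(n)$ at a corner of $\mu$ is precisely $|\mu|_{j-1}+1$. It requires care with the conventions of Definition~\ref{def-g-index01} --- which column is ``the adjacent column'', the special part played by the left label column for boxes in the first column, and the padding $\mu=(\mu_1,\dots,\mu_{n-1})$ that enters through $|\mu|_0$ --- whereas the bijections and the invariance of the remaining $g$-indices under removing the maximal entry are routine.
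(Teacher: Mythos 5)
Your proposal is correct and follows essentially the same route as the paper's own proof: induction on $n$ via the recurrence of Lemma~\ref{lemma:pml}, removing the maximal entry $n$ (end of bottom row giving weight $1$ and shape $(k-1,\mu^{(0)})$; removable corner in a row of length $j$ giving weight $g_Z(n)=|\mu|_{j-1}+1$ and shape $(k,\mu^{(j)})$). The only difference is that you spell out the details the paper leaves implicit --- the base cases, the bijectivity of deletion, the invariance of the other $g$-indices, and the count verifying $g_Z(n)=|\mu|_{j-1}+1$ (including the $j=1$ case via $|\mu|_0=(n-1)-\ell(\mu)$) --- all of which check out.
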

\begin{proof}
	Identity \eqref{eq:kmu:tableau} is obtained from Lemma \ref{lemma:pml} by induction on $n$. The maximum letter $n$ in the $k$-Young tableaux $Z$ can be
	at the end of the bottom row, or a corner in the top Young tableau of $Z$.
	In the first case, $g_Z(n)=1$, and removing the letter $n$ yields a
	$(k-1)$-Young tableau of shape $(k-1,\mu)$. In the second case, $g_Z(n)=|\mu|_{j-1}+1$, and removing the letter $n$ yields
	a $k$-Young tableaux of shape $(k, \mu^{(j)})$, where $j$ is the length of the row contained $n$.
	We recover all terms in \eqref{eq:p}.
\end{proof}

The Stirling numbers of the first kind $\stirling{n}{k}$ can be defined as follows:
$$\sum_{k=1}^n\stirling{n}{k}x^k=x(x+1)\cdots (x+n-1).$$
According to~\cite[Proposition~A. 2]{Blasiak10}, we have
$(e^xD)^nf=e^{nx}\sum_{k=1}^n\stirling{n}{k}\mathbf{f}_k$.
We replace $c=e^x$ and $c_j=e^x$ in~\eqref{eq:cD:p}. By Theorem \ref{th:kmu:tableau}, we obtain
\begin{equation*}
	(e^xD)^nf = e^{nx}\sum_{(k,\mu)\vdash n} \sum_{Z} G_Z \mathbf{f}_k,
\end{equation*}
Hence
\begin{equation}
\sum_{(k,\mu)\vdash n} \sum_{Z} G_Z x^k=x(x+1)(x+2)\cdots(x+n-1),
\end{equation}
where the first summation is taken over all type $(k,\mu)$ of $n$, and
the second summation is taken over all $k$-Young tableaux of shape $(k,\mu)$.
For example, when $n=4$, the $k$-Young tableaux with their $g$-indexes are listed in
Figure \ref{fig:kmu4}.
\begin{figure}[tbp]
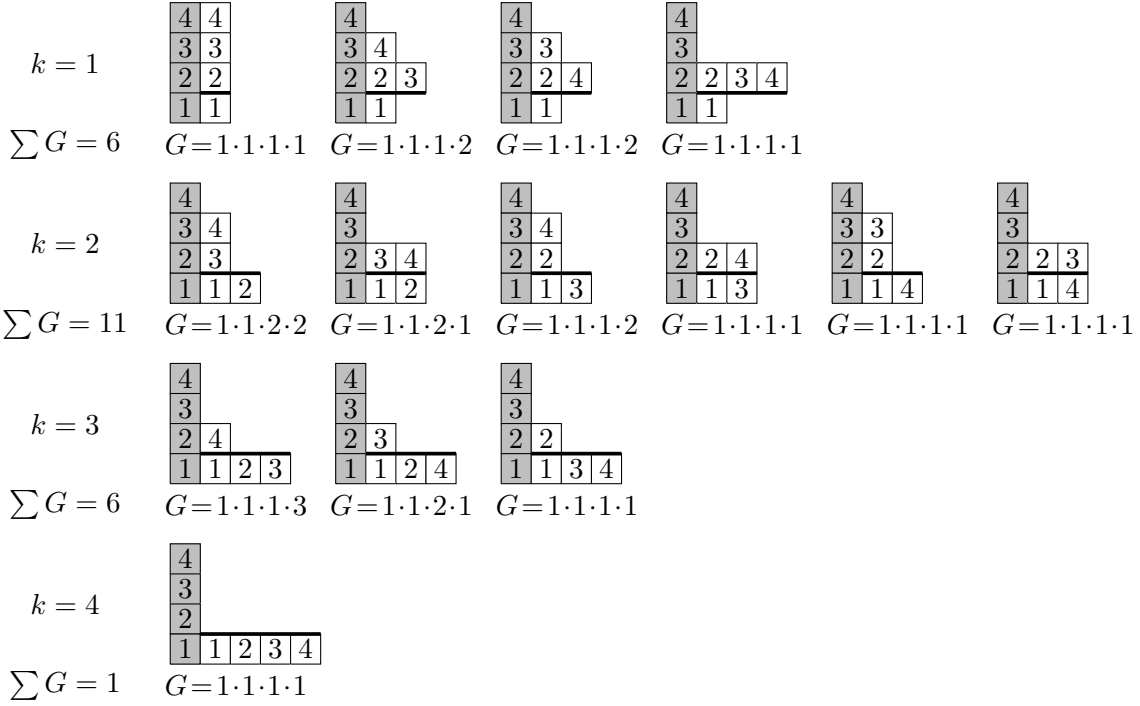

\begin{center}
% [inline block 0: 1 envs, 35479 chars -> data_tex | \begin{tikzpicture} % <<< \fill [gray!0](-1.6000,10.6000)--(-1.6000,11.0000)--(-1.2000,11.0000)--(-1.2000,10.6000)--(-1....]
 % >>>
\end{center}
\caption{All $k$-Young tableaux of size $4$ and their $g$-indexes}
\label{fig:kmu4}
\end{figure}

As an application of Theorem~\ref{th:kmu:tableau}, we give the following result.
\begin{proposition}
Let $\Stirling{n}{k}$ be the Stirling numbers of the second kind. Then we have
$$\Stirling{n}{k}=\sum_Z G_Z,$$
where the summation is taken over all $k$-Young tableaux of shape $(k, (1^{n-k}0^{k-1}))$.
\end{proposition}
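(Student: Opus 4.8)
\emph{Proof proposal.} The plan is to recognize the shape $(k,(1^{n-k}0^{k-1}))$ as the unique one that survives the specialization $c=x$ in the expansion~\eqref{eq:cD:p}, and then to invoke Theorem~\ref{th:kmu:tableau}. This parallels the $c=e^x$ computation carried out just above for the Stirling numbers of the first kind.

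First I would put $c=x$, so that $c_0=x$, $c_1=1$, and $c_j=0$ for all $j\ge 2$. In~\eqref{eq:cD:p} the monomial $cc_{\mu_1}c_{\mu_2}\cdots c_{\mu_{n-1}}$ then vanishes unless every part $\mu_i$ of $\mu$ lies in $\{0,1\}$; since a type $(k,\mu)\vdash n$ records $\mu\vdash n-k$ as a sequence of $n-1$ parts, the only surviving type is $\mu=(1^{n-k}0^{k-1})$, for which the monomial equals $x\cdot 1^{n-k}\cdot x^{k-1}=x^k$. Hence $(xD)^nf=\sum_{k=1}^n p_{k,(1^{n-k}0^{k-1})}\,x^k\mathbf{f}_k$, and comparing with Scherk's formula~\eqref{Stirling-def}, which gives $(xD)^nf=\sum_k\Stirling{n}{k}x^k\mathbf{f}_k$, I read off $p_{k,(1^{n-k}0^{k-1})}=\Stirling{n}{k}$. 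Equivalently, this identification is already contained in the Example following Theorem~\ref{thm01}: the inversion sequences of type $(k,(1^{n-k}0^{k-1}))$ are precisely those $\mathbf{e}\in\I_n$ with $|\mathbf{e}|_0=k$ and $|\mathbf{e}|_j\in\{0,1\}$ for $1\le j\le n-1$, and these number $\Stirling{n}{k}$.

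Finally I would apply Theorem~\ref{th:kmu:tableau} with $(k,\mu)=(k,(1^{n-k}0^{k-1}))$, obtaining $p_{k,(1^{n-k}0^{k-1})}=\sum_Z G_Z$ over all $k$-Young tableaux $Z$ of that shape, and combine this with the previous identity to conclude $\Stirling{n}{k}=\sum_Z G_Z$. There is no serious obstacle here; the only point needing a moment's care is that the substitution $c=x$ annihilates the monomial of every type $(k,\mu)\vdash n$ except that of $\mu=(1^{n-k}0^{k-1})$, which follows at once from $c_j=0$ for $j\ge 2$ together with the uniqueness of a partition of $n-k$ into $n-1$ parts each at most $1$ (namely $n-k$ parts equal to $1$ and $k-1$ parts equal to $0$).
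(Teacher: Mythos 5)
Your proposal is correct, but it follows a different route from the paper's own proof. You specialize only $c=x$ in~\eqref{eq:cD:p}, observe that $c_j=0$ for $j\ge 2$ kills every type except $\mu=(1^{n-k}0^{k-1})$, and then match the surviving expansion $(xD)^nf=\sum_k p_{k,(1^{n-k}0^{k-1})}x^k\mathbf{f}_k$ against Scherk's identity~\eqref{Stirling-def} to get $p_{k,(1^{n-k}0^{k-1})}=\Stirling{n}{k}$, finishing with Theorem~\ref{th:kmu:tableau}; as you note, this identification is essentially the content of the example following Theorem~\ref{thm01}. The paper instead specializes both $c=x$ and $f=1/(1-x)$, invokes the classical identity~\eqref{Anx-def} to recognize the result as $A_n(x)/(1-x)^{n+1}$, and then compares with the Frobenius formula $A_n(x)=\sum_k k!\Stirling{n}{k}x^k(1-x)^{n-k}$ in the basis $\{x^k(1-x)^{n-k}\}$. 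Your argument is more direct, bypassing the Eulerian-polynomial detour entirely; its only implicit ingredient is that the coefficients of $\mathbf{f}_k$ in an identity of the form $(xD)^nf=\sum_k(\cdot)\,\mathbf{f}_k$ are well defined for a generic differentiable $f$, which is exactly the convention the paper already uses in defining $A_{n,k}$ via~\eqref{Ank-def}, so this is not a gap. The paper's version buys a self-contained comparison of two polynomial identities in $x$ (at the cost of quoting the Frobenius formula), while yours buys brevity and stays entirely within the machinery of Section~\ref{section3}.
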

\begin{proof}
Let $c=x$ and $f=1/{(1-x)}$. Then $c_1=1$ and $c_j=0$ for $j\geq2$, and
$\mathbf{f}_k={k!}/(1-x)^{k+1}$.
It follows from~\eqref{eq:cD:p} that
	\begin{align*}
	\left(xD\right)^n \frac{1}{1-x}
			&= \sum_{(k, \mu)\vdash n} p_{k, \mu} c c_{\mu_1}c_{\mu_2}\cdots c_{\mu_{n-1}}  \mathbf{f}_k\\
			&= \sum_{(k, \mu=(1^{n-k}0^{k-1}))\vdash n} p_{k, \mu}\cdot
			\frac{k! x^k}{(1-x)^{k+1}}\\
			&= \frac{1}{(1-x)^{n+1}} \sum_{(k, \mu=(1^{n-k}0^{k-1}))\vdash n} p_{k, \mu}
			\cdot k!{x^{k}} (1-x)^{n-k}.
	\end{align*}
	By Theorem \ref{th:kmu:tableau},
	we have
	\begin{align}	
		A_n(x)
		&= \sum_{k=0}^n p_{k, (1^{n-k}0^{k-1})}\cdot k! x^k(1-x)^{n-k}\nonumber\\
		&= \sum_{k=0}^n \sum_Z G_Z\cdot k! x^k(1-x)^{n-k}\label{eq:A:GZ},
	\end{align}
where the second summation is taken over all $k$-Young tableaux of shape $(k, (1^{n-k}0^{k-1}))$. Recall that the Frobenius formula for Eulerian polynomials is given as follows (see~\cite{Chow08} for instance):
$$A_n(x)=\sum_{k=0}^nk!\Stirling{n}{k}x^k(1-x)^{n-k}.$$
By comparing with~\eqref{eq:A:GZ}, we get the desired result.
\end{proof}
\subsection{The $g$-index of Young tableaux}
\hspace*{\parindent}

Let $T$ be a standard Young tableau
of shape $\lambda$. We always put a special column of $n$ boxes at the left of $T$, and labelled by $1,2,3,\ldots, n$ from bottom to top.
For each $v\in [n]$, suppose that $v$ is in the box $(i,j)$,
we define the {\it $g$-index} of $v$, denoted by $g_{T}(v)$, to be the number of boxes $(i-1,j')$
such that $j'\geq j$ and the letter in this box is less than or equal to $v$ (see Figure \ref{fig:Ytab}, right diagram).
The {\it $g$-index} of $T$ is defined by
$$G_T=g_T(1) g_T(2) \cdots g_T(n).$$
For the Young tableau given in Figure \ref{fig:Ytab} (left diagram), we have
$$
g_T(1)=1, \
g_T(2)=1, \
g_T(3)=2, \
g_T(4)=1, \
g_T(5)=1, \
g_T(6)=4, \
g_T(7)=1.
$$
\begin{figure}[tbp]
\begin{center}
\begin{tikzpicture} % <<<
\fill [gray!100](2.4000,5.6000)--(2.4000,6.4000)--(3.2000,6.4000)--(3.2000,5.6000)--(2.4000,5.6000);
\draw [gray!200](2.4000,5.6000)--(2.4000,6.4000)--(3.2000,6.4000)--(3.2000,5.6000)--(2.4000,5.6000);
\draw [gray!0] (2.8000, 6.0000) node [] {7};
\fill [gray!100](2.4000,4.8000)--(2.4000,5.6000)--(3.2000,5.6000)--(3.2000,4.8000)--(2.4000,4.8000);
\draw [gray!200](2.4000,4.8000)--(2.4000,5.6000)--(3.2000,5.6000)--(3.2000,4.8000)--(2.4000,4.8000);
\draw [gray!0] (2.8000, 5.2000) node [] {6};
\fill [gray!100](2.4000,4.0000)--(2.4000,4.8000)--(3.2000,4.8000)--(3.2000,4.0000)--(2.4000,4.0000);
\draw [gray!200](2.4000,4.0000)--(2.4000,4.8000)--(3.2000,4.8000)--(3.2000,4.0000)--(2.4000,4.0000);
\draw [gray!0] (2.8000, 4.4000) node [] {5};
\fill [gray!100](2.4000,3.2000)--(2.4000,4.0000)--(3.2000,4.0000)--(3.2000,3.2000)--(2.4000,3.2000);
\draw [gray!200](2.4000,3.2000)--(2.4000,4.0000)--(3.2000,4.0000)--(3.2000,3.2000)--(2.4000,3.2000);
\draw [gray!0] (2.8000, 3.6000) node [] {4};
\fill [gray!100](2.4000,2.4000)--(2.4000,3.2000)--(3.2000,3.2000)--(3.2000,2.4000)--(2.4000,2.4000);
\draw [gray!200](2.4000,2.4000)--(2.4000,3.2000)--(3.2000,3.2000)--(3.2000,2.4000)--(2.4000,2.4000);
\draw [gray!0] (2.8000, 2.8000) node [] {3};
\fill [gray!100](2.4000,1.6000)--(2.4000,2.4000)--(3.2000,2.4000)--(3.2000,1.6000)--(2.4000,1.6000);
\draw [gray!200](2.4000,1.6000)--(2.4000,2.4000)--(3.2000,2.4000)--(3.2000,1.6000)--(2.4000,1.6000);
\draw [gray!0] (2.8000, 2.0000) node [] {2};
\fill [gray!100](2.4000,0.8000)--(2.4000,1.6000)--(3.2000,1.6000)--(3.2000,0.8000)--(2.4000,0.8000);
\draw [gray!200](2.4000,0.8000)--(2.4000,1.6000)--(3.2000,1.6000)--(3.2000,0.8000)--(2.4000,0.8000);
\draw [gray!0] (2.8000, 1.2000) node [] {1};
None
\fill [gray!0](3.2000,2.4000)--(3.2000,3.2000)--(4.0000,3.2000)--(4.0000,2.4000)--(3.2000,2.4000);
\draw [gray!200](3.2000,2.4000)--(3.2000,3.2000)--(4.0000,3.2000)--(4.0000,2.4000)--(3.2000,2.4000);
\draw [gray!200] (3.6000, 2.8000) node [] {6};
\fill [gray!0](3.2000,1.6000)--(3.2000,2.4000)--(4.0000,2.4000)--(4.0000,1.6000)--(3.2000,1.6000);
\draw [gray!200](3.2000,1.6000)--(3.2000,2.4000)--(4.0000,2.4000)--(4.0000,1.6000)--(3.2000,1.6000);
\draw [gray!200] (3.6000, 2.0000) node [] {2};
\fill [gray!0](4.0000,1.6000)--(4.0000,2.4000)--(4.8000,2.4000)--(4.8000,1.6000)--(4.0000,1.6000);
\draw [gray!200](4.0000,1.6000)--(4.0000,2.4000)--(4.8000,2.4000)--(4.8000,1.6000)--(4.0000,1.6000);
\draw [gray!200] (4.4000, 2.0000) node [] {5};
\fill [gray!0](3.2000,0.8000)--(3.2000,1.6000)--(4.0000,1.6000)--(4.0000,0.8000)--(3.2000,0.8000);
\draw [gray!200](3.2000,0.8000)--(3.2000,1.6000)--(4.0000,1.6000)--(4.0000,0.8000)--(3.2000,0.8000);
\draw [gray!200] (3.6000, 1.2000) node [] {1};
\fill [gray!0](4.0000,0.8000)--(4.0000,1.6000)--(4.8000,1.6000)--(4.8000,0.8000)--(4.0000,0.8000);
\draw [gray!200](4.0000,0.8000)--(4.0000,1.6000)--(4.8000,1.6000)--(4.8000,0.8000)--(4.0000,0.8000);
\draw [gray!200] (4.4000, 1.2000) node [] {3};
\fill [gray!0](4.8000,0.8000)--(4.8000,1.6000)--(5.6000,1.6000)--(5.6000,0.8000)--(4.8000,0.8000);
\draw [gray!200](4.8000,0.8000)--(4.8000,1.6000)--(5.6000,1.6000)--(5.6000,0.8000)--(4.8000,0.8000);
\draw [gray!200] (5.2000, 1.2000) node [] {4};
\fill [gray!0](5.6000,0.8000)--(5.6000,1.6000)--(6.4000,1.6000)--(6.4000,0.8000)--(5.6000,0.8000);
\draw [gray!200](5.6000,0.8000)--(5.6000,1.6000)--(6.4000,1.6000)--(6.4000,0.8000)--(5.6000,0.8000);
\draw [gray!200] (6.0000, 1.2000) node [] {7};
None
\fill [gray!0](8.8000,5.6000)--(8.8000,6.0000)--(9.2000,6.0000)--(9.2000,5.6000)--(8.8000,5.6000);
\draw [gray!200](8.8000,5.6000)--(8.8000,6.0000)--(9.2000,6.0000)--(9.2000,5.6000)--(8.8000,5.6000);
\draw [gray!200] (9.0000, 5.8000) node [] {};
\fill [gray!0](8.8000,5.2000)--(8.8000,5.6000)--(9.2000,5.6000)--(9.2000,5.2000)--(8.8000,5.2000);
\draw [gray!200](8.8000,5.2000)--(8.8000,5.6000)--(9.2000,5.6000)--(9.2000,5.2000)--(8.8000,5.2000);
\draw [gray!200] (9.0000, 5.4000) node [] {};
\fill [gray!0](8.8000,4.8000)--(8.8000,5.2000)--(9.2000,5.2000)--(9.2000,4.8000)--(8.8000,4.8000);
\draw [gray!200](8.8000,4.8000)--(8.8000,5.2000)--(9.2000,5.2000)--(9.2000,4.8000)--(8.8000,4.8000);
\draw [gray!200] (9.0000, 5.0000) node [] {};
\fill [gray!0](8.8000,4.4000)--(8.8000,4.8000)--(9.2000,4.8000)--(9.2000,4.4000)--(8.8000,4.4000);
\draw [gray!200](8.8000,4.4000)--(8.8000,4.8000)--(9.2000,4.8000)--(9.2000,4.4000)--(8.8000,4.4000);
\draw [gray!200] (9.0000, 4.6000) node [] {};
\fill [gray!0](8.8000,4.0000)--(8.8000,4.4000)--(9.2000,4.4000)--(9.2000,4.0000)--(8.8000,4.0000);
\draw [gray!200](8.8000,4.0000)--(8.8000,4.4000)--(9.2000,4.4000)--(9.2000,4.0000)--(8.8000,4.0000);
\draw [gray!200] (9.0000, 4.2000) node [] {};
\fill [gray!0](9.2000,4.0000)--(9.2000,4.4000)--(9.6000,4.4000)--(9.6000,4.0000)--(9.2000,4.0000);
\draw [gray!200](9.2000,4.0000)--(9.2000,4.4000)--(9.6000,4.4000)--(9.6000,4.0000)--(9.2000,4.0000);
\draw [gray!200] (9.4000, 4.2000) node [] {};
\fill [gray!0](9.6000,4.0000)--(9.6000,4.4000)--(10.0000,4.4000)--(10.0000,4.0000)--(9.6000,4.0000);
\draw [gray!200](9.6000,4.0000)--(9.6000,4.4000)--(10.0000,4.4000)--(10.0000,4.0000)--(9.6000,4.0000);
\draw [gray!200] (9.8000, 4.2000) node [] {};
\fill [gray!0](8.8000,3.6000)--(8.8000,4.0000)--(9.2000,4.0000)--(9.2000,3.6000)--(8.8000,3.6000);
\draw [gray!200](8.8000,3.6000)--(8.8000,4.0000)--(9.2000,4.0000)--(9.2000,3.6000)--(8.8000,3.6000);
\draw [gray!200] (9.0000, 3.8000) node [] {};
\fill [gray!0](9.2000,3.6000)--(9.2000,4.0000)--(9.6000,4.0000)--(9.6000,3.6000)--(9.2000,3.6000);
\draw [gray!200](9.2000,3.6000)--(9.2000,4.0000)--(9.6000,4.0000)--(9.6000,3.6000)--(9.2000,3.6000);
\draw [gray!200] (9.4000, 3.8000) node [] {};
\fill [gray!0](9.6000,3.6000)--(9.6000,4.0000)--(10.0000,4.0000)--(10.0000,3.6000)--(9.6000,3.6000);
\draw [gray!200](9.6000,3.6000)--(9.6000,4.0000)--(10.0000,4.0000)--(10.0000,3.6000)--(9.6000,3.6000);
\draw [gray!200] (9.8000, 3.8000) node [] {};
\fill [gray!0](8.8000,3.2000)--(8.8000,3.6000)--(9.2000,3.6000)--(9.2000,3.2000)--(8.8000,3.2000);
\draw [gray!200](8.8000,3.2000)--(8.8000,3.6000)--(9.2000,3.6000)--(9.2000,3.2000)--(8.8000,3.2000);
\draw [gray!200] (9.0000, 3.4000) node [] {};
\fill [gray!0](9.2000,3.2000)--(9.2000,3.6000)--(9.6000,3.6000)--(9.6000,3.2000)--(9.2000,3.2000);
\draw [gray!200](9.2000,3.2000)--(9.2000,3.6000)--(9.6000,3.6000)--(9.6000,3.2000)--(9.2000,3.2000);
\draw [gray!200] (9.4000, 3.4000) node [] {$y$};
\fill [gray!0](9.6000,3.2000)--(9.6000,3.6000)--(10.0000,3.6000)--(10.0000,3.2000)--(9.6000,3.2000);
\draw [gray!200](9.6000,3.2000)--(9.6000,3.6000)--(10.0000,3.6000)--(10.0000,3.2000)--(9.6000,3.2000);
\draw [gray!200] (9.8000, 3.4000) node [] {};
\fill [gray!0](8.8000,2.8000)--(8.8000,3.2000)--(9.2000,3.2000)--(9.2000,2.8000)--(8.8000,2.8000);
\draw [gray!200](8.8000,2.8000)--(8.8000,3.2000)--(9.2000,3.2000)--(9.2000,2.8000)--(8.8000,2.8000);
\draw [gray!200] (9.0000, 3.0000) node [] {};
\fill [gray!0](9.2000,2.8000)--(9.2000,3.2000)--(9.6000,3.2000)--(9.6000,2.8000)--(9.2000,2.8000);
\draw [gray!200](9.2000,2.8000)--(9.2000,3.2000)--(9.6000,3.2000)--(9.6000,2.8000)--(9.2000,2.8000);
\draw [gray!200] (9.4000, 3.0000) node [] {};
\fill [gray!0](9.6000,2.8000)--(9.6000,3.2000)--(10.0000,3.2000)--(10.0000,2.8000)--(9.6000,2.8000);
\draw [gray!200](9.6000,2.8000)--(9.6000,3.2000)--(10.0000,3.2000)--(10.0000,2.8000)--(9.6000,2.8000);
\draw [gray!200] (9.8000, 3.0000) node [] {};
\fill [gray!0](8.8000,2.4000)--(8.8000,2.8000)--(9.2000,2.8000)--(9.2000,2.4000)--(8.8000,2.4000);
\draw [gray!200](8.8000,2.4000)--(8.8000,2.8000)--(9.2000,2.8000)--(9.2000,2.4000)--(8.8000,2.4000);
\draw [gray!200] (9.0000, 2.6000) node [] {};
\fill [gray!0](9.2000,2.4000)--(9.2000,2.8000)--(9.6000,2.8000)--(9.6000,2.4000)--(9.2000,2.4000);
\draw [gray!200](9.2000,2.4000)--(9.2000,2.8000)--(9.6000,2.8000)--(9.6000,2.4000)--(9.2000,2.4000);
\draw [gray!200] (9.4000, 2.6000) node [] {};
\fill [gray!0](9.6000,2.4000)--(9.6000,2.8000)--(10.0000,2.8000)--(10.0000,2.4000)--(9.6000,2.4000);
\draw [gray!200](9.6000,2.4000)--(9.6000,2.8000)--(10.0000,2.8000)--(10.0000,2.4000)--(9.6000,2.4000);
\draw [gray!200] (9.8000, 2.6000) node [] {};
\fill [gray!0](10.0000,2.4000)--(10.0000,2.8000)--(10.4000,2.8000)--(10.4000,2.4000)--(10.0000,2.4000);
\draw [gray!200](10.0000,2.4000)--(10.0000,2.8000)--(10.4000,2.8000)--(10.4000,2.4000)--(10.0000,2.4000);
\draw [gray!200] (10.2000, 2.6000) node [] {};
\fill [gray!0](8.8000,2.0000)--(8.8000,2.4000)--(9.2000,2.4000)--(9.2000,2.0000)--(8.8000,2.0000);
\draw [gray!200](8.8000,2.0000)--(8.8000,2.4000)--(9.2000,2.4000)--(9.2000,2.0000)--(8.8000,2.0000);
\draw [gray!200] (9.0000, 2.2000) node [] {};
\fill [gray!0](9.2000,2.0000)--(9.2000,2.4000)--(9.6000,2.4000)--(9.6000,2.0000)--(9.2000,2.0000);
\draw [gray!200](9.2000,2.0000)--(9.2000,2.4000)--(9.6000,2.4000)--(9.6000,2.0000)--(9.2000,2.0000);
\draw [gray!200] (9.4000, 2.2000) node [] {};
\fill [gray!0](9.6000,2.0000)--(9.6000,2.4000)--(10.0000,2.4000)--(10.0000,2.0000)--(9.6000,2.0000);
\draw [gray!200](9.6000,2.0000)--(9.6000,2.4000)--(10.0000,2.4000)--(10.0000,2.0000)--(9.6000,2.0000);
\draw [gray!200] (9.8000, 2.2000) node [] {};
\fill [gray!0](10.0000,2.0000)--(10.0000,2.4000)--(10.4000,2.4000)--(10.4000,2.0000)--(10.0000,2.0000);
\draw [gray!200](10.0000,2.0000)--(10.0000,2.4000)--(10.4000,2.4000)--(10.4000,2.0000)--(10.0000,2.0000);
\draw [gray!200] (10.2000, 2.2000) node [] {};
\fill [gray!0](8.8000,1.6000)--(8.8000,2.0000)--(9.2000,2.0000)--(9.2000,1.6000)--(8.8000,1.6000);
\draw [gray!200](8.8000,1.6000)--(8.8000,2.0000)--(9.2000,2.0000)--(9.2000,1.6000)--(8.8000,1.6000);
\draw [gray!200] (9.0000, 1.8000) node [] {};
\fill [gray!0](9.2000,1.6000)--(9.2000,2.0000)--(9.6000,2.0000)--(9.6000,1.6000)--(9.2000,1.6000);
\draw [gray!200](9.2000,1.6000)--(9.2000,2.0000)--(9.6000,2.0000)--(9.6000,1.6000)--(9.2000,1.6000);
\draw [gray!200] (9.4000, 1.8000) node [] {};
\fill [gray!0](9.6000,1.6000)--(9.6000,2.0000)--(10.0000,2.0000)--(10.0000,1.6000)--(9.6000,1.6000);
\draw [gray!200](9.6000,1.6000)--(9.6000,2.0000)--(10.0000,2.0000)--(10.0000,1.6000)--(9.6000,1.6000);
\draw [gray!200] (9.8000, 1.8000) node [] {$v$};
\fill [gray!0](10.0000,1.6000)--(10.0000,2.0000)--(10.4000,2.0000)--(10.4000,1.6000)--(10.0000,1.6000);
\draw [gray!200](10.0000,1.6000)--(10.0000,2.0000)--(10.4000,2.0000)--(10.4000,1.6000)--(10.0000,1.6000);
\draw [gray!200] (10.2000, 1.8000) node [] {};
\fill [gray!0](10.4000,1.6000)--(10.4000,2.0000)--(10.8000,2.0000)--(10.8000,1.6000)--(10.4000,1.6000);
\draw [gray!200](10.4000,1.6000)--(10.4000,2.0000)--(10.8000,2.0000)--(10.8000,1.6000)--(10.4000,1.6000);
\draw [gray!200] (10.6000, 1.8000) node [] {};
\fill [gray!0](8.8000,1.2000)--(8.8000,1.6000)--(9.2000,1.6000)--(9.2000,1.2000)--(8.8000,1.2000);
\draw [gray!200](8.8000,1.2000)--(8.8000,1.6000)--(9.2000,1.6000)--(9.2000,1.2000)--(8.8000,1.2000);
\draw [gray!200] (9.0000, 1.4000) node [] {};
\fill [gray!0](9.2000,1.2000)--(9.2000,1.6000)--(9.6000,1.6000)--(9.6000,1.2000)--(9.2000,1.2000);
\draw [gray!200](9.2000,1.2000)--(9.2000,1.6000)--(9.6000,1.6000)--(9.6000,1.2000)--(9.2000,1.2000);
\draw [gray!200] (9.4000, 1.4000) node [] {};
\fill [gray!0](9.6000,1.2000)--(9.6000,1.6000)--(10.0000,1.6000)--(10.0000,1.2000)--(9.6000,1.2000);
\draw [gray!200](9.6000,1.2000)--(9.6000,1.6000)--(10.0000,1.6000)--(10.0000,1.2000)--(9.6000,1.2000);
\draw [gray!200] (9.8000, 1.4000) node [] {};
\fill [gray!0](10.0000,1.2000)--(10.0000,1.6000)--(10.4000,1.6000)--(10.4000,1.2000)--(10.0000,1.2000);
\draw [gray!200](10.0000,1.2000)--(10.0000,1.6000)--(10.4000,1.6000)--(10.4000,1.2000)--(10.0000,1.2000);
\draw [gray!200] (10.2000, 1.4000) node [] {};
\fill [gray!0](10.4000,1.2000)--(10.4000,1.6000)--(10.8000,1.6000)--(10.8000,1.2000)--(10.4000,1.2000);
\draw [gray!200](10.4000,1.2000)--(10.4000,1.6000)--(10.8000,1.6000)--(10.8000,1.2000)--(10.4000,1.2000);
\draw [gray!200] (10.6000, 1.4000) node [] {};
\fill [gray!0](8.8000,0.8000)--(8.8000,1.2000)--(9.2000,1.2000)--(9.2000,0.8000)--(8.8000,0.8000);
\draw [gray!200](8.8000,0.8000)--(8.8000,1.2000)--(9.2000,1.2000)--(9.2000,0.8000)--(8.8000,0.8000);
\draw [gray!200] (9.0000, 1.0000) node [] { };
\fill [gray!0](9.2000,0.8000)--(9.2000,1.2000)--(9.6000,1.2000)--(9.6000,0.8000)--(9.2000,0.8000);
\draw [gray!200](9.2000,0.8000)--(9.2000,1.2000)--(9.6000,1.2000)--(9.6000,0.8000)--(9.2000,0.8000);
\draw [gray!200] (9.4000, 1.0000) node [] {};
\fill [gray!0](9.6000,0.8000)--(9.6000,1.2000)--(10.0000,1.2000)--(10.0000,0.8000)--(9.6000,0.8000);
\draw [gray!200](9.6000,0.8000)--(9.6000,1.2000)--(10.0000,1.2000)--(10.0000,0.8000)--(9.6000,0.8000);
\draw [gray!200] (9.8000, 1.0000) node [] {};
\fill [gray!0](10.0000,0.8000)--(10.0000,1.2000)--(10.4000,1.2000)--(10.4000,0.8000)--(10.0000,0.8000);
\draw [gray!200](10.0000,0.8000)--(10.0000,1.2000)--(10.4000,1.2000)--(10.4000,0.8000)--(10.0000,0.8000);
\draw [gray!200] (10.2000, 1.0000) node [] {};
\fill [gray!0](10.4000,0.8000)--(10.4000,1.2000)--(10.8000,1.2000)--(10.8000,0.8000)--(10.4000,0.8000);
\draw [gray!200](10.4000,0.8000)--(10.4000,1.2000)--(10.8000,1.2000)--(10.8000,0.8000)--(10.4000,0.8000);
\draw [gray!200] (10.6000, 1.0000) node [] {};
\fill [gray!0](10.8000,0.8000)--(10.8000,1.2000)--(11.2000,1.2000)--(11.2000,0.8000)--(10.8000,0.8000);
\draw [gray!200](10.8000,0.8000)--(10.8000,1.2000)--(11.2000,1.2000)--(11.2000,0.8000)--(10.8000,0.8000);
\draw [gray!200] (11.0000, 1.0000) node [] {};
None
\fill [gray!50](9.2000,2.8000)--(9.2000,3.2000)--(9.6000,3.2000)--(9.6000,2.8000)--(9.2000,2.8000);
\draw [gray!200](9.2000,2.8000)--(9.2000,3.2000)--(9.6000,3.2000)--(9.6000,2.8000)--(9.2000,2.8000);
\draw [gray!200] (9.4000, 3.0000) node [] {$x$};
\fill [gray!50](9.2000,2.4000)--(9.2000,2.8000)--(9.6000,2.8000)--(9.6000,2.4000)--(9.2000,2.4000);
\draw [gray!200](9.2000,2.4000)--(9.2000,2.8000)--(9.6000,2.8000)--(9.6000,2.4000)--(9.2000,2.4000);
\draw [gray!200] (9.4000, 2.6000) node [] {$x$};
\fill [gray!50](9.2000,2.0000)--(9.2000,2.4000)--(9.6000,2.4000)--(9.6000,2.0000)--(9.2000,2.0000);
\draw [gray!200](9.2000,2.0000)--(9.2000,2.4000)--(9.6000,2.4000)--(9.6000,2.0000)--(9.2000,2.0000);
\draw [gray!200] (9.4000, 2.2000) node [] {$x$};
\fill [gray!50](9.2000,1.6000)--(9.2000,2.0000)--(9.6000,2.0000)--(9.6000,1.6000)--(9.2000,1.6000);
\draw [gray!200](9.2000,1.6000)--(9.2000,2.0000)--(9.6000,2.0000)--(9.6000,1.6000)--(9.2000,1.6000);
\draw [gray!200] (9.4000, 1.8000) node [] {$x$};
None
\fill [gray!0](12.0000,4.0000)--(12.0000,4.4000)--(12.4000,4.4000)--(12.4000,4.0000)--(12.0000,4.0000);
\draw [gray!0](12.0000,4.0000)--(12.0000,4.4000)--(12.4000,4.4000)--(12.4000,4.0000)--(12.0000,4.0000);
\draw [gray!200] (12.2000, 4.2000) node [] {$x\leq v<y$};
\fill [gray!0](12.0000,3.2000)--(12.0000,3.6000)--(12.4000,3.6000)--(12.4000,3.2000)--(12.0000,3.2000);
\draw [gray!0](12.0000,3.2000)--(12.0000,3.6000)--(12.4000,3.6000)--(12.4000,3.2000)--(12.0000,3.2000);
\draw [gray!200] (12.2000, 3.4000) node [] {$g_T(v)=\#x$};
\end{tikzpicture} % >>>
\end{center}
\caption{Young tableaux and $g$-index}
\label{fig:Ytab}
\end{figure}
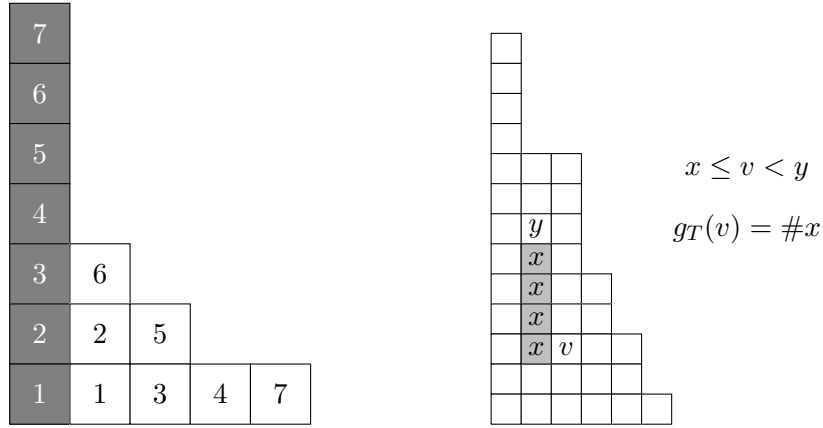

Let $\lambda(T)$ be the corresponding partition of the Young tableau $T$.
If $\lambda(T)=(\lambda_1,\lambda_2,\ldots,\lambda_{\ell})$, then let $\lambda(T)! =\lambda_1! \lambda_2!\cdots \lambda_\ell !$.

We now present the fourth main result of this paper.
\begin{theorem}\label{th:Cn}
Let $C_n(x)$ be the second-order Eulerian polynomials. Then we have
\begin{align}\label{eq:th:Cn}
	C_n(x) &= \sum_{T\in \SYT(n)} G_T\  \lambda(T) ! \ x^{n+1-\ell(\lambda(T))}.
\end{align}
\end{theorem}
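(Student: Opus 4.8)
The plan is to turn the claim, via the operator expansion~\eqref{eq:cD:p}, into a statement about standard Young tableaux, and then verify that statement by checking that the right-hand side of~\eqref{eq:th:Cn} satisfies the same recurrence as $C_n(x)$.

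First I would specialize~\eqref{eq:cD:p} to $c=c(x)=x/(1-x)$ and $f=f(x)=1/(1-x)$. Here $c_0=c$, while $c_j=j!/(1-x)^{j+1}$ for $j\ge 1$ and $\mathbf{f}_k=k!/(1-x)^{k+1}$. For a type $(k,\mu)\vdash n$ with $\ell(\mu)$ nonzero parts, the product $c\,c_{\mu_1}\cdots c_{\mu_{n-1}}$ consists of $n-\ell(\mu)$ copies of $c_0=c$ (the leading $c$ together with the $n-1-\ell(\mu)$ zero parts) times one factor $\mu_i!/(1-x)^{\mu_i+1}$ for each nonzero part, so, together with $\mathbf{f}_k$,
\[
c\,c_{\mu_1}\cdots c_{\mu_{n-1}}\,\mathbf{f}_k=\frac{x^{\,n-\ell(\mu)}\,\mu!\,k!}{(1-x)^{2n+1}},
\]
where $\mu!$ is the product of the factorials of the parts of $\mu$. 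Combining this with~\eqref{eq:Cnx} and then with Theorem~\ref{th:kmu:tableau} yields
\[
C_n(x)=\sum_{(k,\mu)\vdash n}p_{k,\mu}\,k!\,\mu!\,x^{\,n-\ell(\mu)}=\sum_{(k,\mu)\vdash n}\ \sum_{Z}G_Z\,k!\,\mu!\,x^{\,n-\ell(\mu)},
\]
the inner sum running over all $k$-Young tableaux $Z$ of shape $(k,\mu)$. One could try to match this with~\eqref{eq:th:Cn} through an explicit bijection $Z\mapsto T$, but I expect the cleaner route is to verify~\eqref{eq:th:Cn} by recurrence.

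Write $R_n(x)$ for the right-hand side of~\eqref{eq:th:Cn}. Since $\SYT(1)$ has one element, $R_1(x)=x=C_1(x)$, so by~\eqref{recuCnx} with $k=2$ it is enough to prove $R_{n+1}(x)=(2n+1)x\,R_n(x)+x(1-x)\,R_n'(x)$ for $n\ge 1$. Given $T\in\SYT(n+1)$, remove the box holding the largest entry $n+1$; this box is an outer corner, so the result is some $T'\in\SYT(n)$, and conversely $T$ is recovered by adjoining a box at an addable corner of $\nu:=\lambda(T')$. Because $n+1$ is never among the entries counted by $g_T(v)$ for $v\le n$ and no other box moves, $g_T(v)=g_{T'}(v)$ for all $v\le n$, so $G_T=G_{T'}\,g_T(n+1)$. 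If the new box lies on an existing row $i\le\ell(\nu)$ (so $\nu_{i-1}>\nu_i$), then $\ell(\lambda(T))=\ell(\nu)$, $\lambda(T)!=(\nu_i+1)\,\nu!$, and $g_T(n+1)$ equals the number of boxes of column $\nu_i$ in rows $\ge i$, which is exactly the multiplicity $m_{\nu_i}$ of the part $\nu_i$ in $\nu$; if instead the new box forms a new row, then $\ell(\lambda(T))=\ell(\nu)+1$, $\lambda(T)!=\nu!$, and reading off the special column gives $g_T(n+1)=n+1-\ell(\nu)$. Matching the contribution of each $T'$ to $R_{n+1}(x)$ against its contribution to $(2n+1)x\,R_n(x)+x(1-x)\,R_n'(x)$, one finds that the coefficients of $x^{\,n+1-\ell(\nu)}$ agree automatically, while those of $x^{\,n+2-\ell(\nu)}$ agree exactly when
\[
\sum_{\substack{1\le i\le\ell(\nu)\\ \nu_{i-1}>\nu_i}} m_{\nu_i}\,(\nu_i+1)\ =\ |\nu|+\ell(\nu)
\]
for every partition $\nu\vdash n$. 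The indices $i$ with $\nu_{i-1}>\nu_i$ are precisely the tops of the blocks of equal parts, hence in bijection with the distinct positive part-values $p$ of $\nu$ (with $\nu_i=p$ and $m_{\nu_i}=m_p$), so the left side equals $\sum_p m_p(p+1)=\sum_p m_p p+\sum_p m_p=|\nu|+\ell(\nu)$, which establishes the recurrence and hence~\eqref{eq:th:Cn}.

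The main obstacle is the bookkeeping in the removal step: keeping precise track of how $G_T$, $\lambda(T)!$ and $\ell(\lambda(T))$ change when the box $n+1$ is adjoined, and in particular establishing the two clean identifications $g_T(n+1)=m_{\nu_i}$ (new box on an old row) and $g_T(n+1)=n+1-\ell(\nu)$ (new box on a new row), which rest on relating the height of column $\nu_i$ to the multiplicity of $\nu_i$ and on the entries of the special column. Once these are secured, the residual partition identity is immediate and the recurrence~\eqref{recuCnx} closes the proof.
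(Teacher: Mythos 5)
Your argument is correct, but it takes a genuinely different route from the paper. The paper proves \eqref{eq:th:Cn} by specializing the expansion \eqref{eq:cD:p} at $c=x/(1-x)$, $f=1/(1-x)$, invoking \eqref{eq:Cnx} and Theorem~\ref{th:kmu:tableau} to write $C_n(x)$ as a sum over $k$-Young tableaux, and then transferring to standard Young tableaux via the column-sorting map $\rho$ and the key Lemma~\ref{lemmaGTZ}, $\sum_{Z\in\rho^{-1}(T)}G_Z=G_T$, established by the $\Gamma_1,\ldots,\Gamma_4$ decomposition; that lemma is then reused to prove Theorems~\ref{th:An} and~\ref{thm:Sn}. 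You instead verify directly that $R_n(x)=\sum_{T\in\SYT(n)}G_T\,\lambda(T)!\,x^{n+1-\ell(\lambda(T))}$ satisfies the recurrence \eqref{recuCnx} with $k=2$, by removing the box containing $n+1$: the facts $G_T=G_{T'}\,g_T(n+1)$, $g_T(n+1)=m_{\nu_i}$ when the box sits on an old row (valid because an addable corner is the lowest row of its block of equal parts) and $g_T(n+1)=n+1-\ell(\nu)$ when it starts a new row, together with $\lambda(T)!=(\nu_i+1)\nu!$ resp. $\nu!$, reduce the recurrence to $\sum_p m_p(p+1)=|\nu|+\ell(\nu)$, which is trivial; with $R_1(x)=x=C_1(x)$ this closes the induction. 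I checked these $g$-index computations against the paper's conventions (special column as column $0$, counting in the column immediately to the left at the same level or higher) and they are right. Note that your opening specialization of \eqref{eq:cD:p} is never used in the recurrence argument and could be dropped. The trade-off: your proof is shorter, avoids $k$-Young tableaux and Lemma~\ref{lemmaGTZ} altogether, and is self-contained for this theorem; the paper's route is longer but produces the $\rho$-machinery and Lemma~\ref{lemmaGTZ}, which it needs again for the Eulerian and Andr\'e polynomial expansions, whereas your method would have to be adapted separately (with the corresponding recurrences) to recover those results.
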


Take $x=1$ in~\eqref{eq:th:Cn}, we obtain the following corollary.
\begin{corollary} We have
\begin{align*}
	(2n-1)!! &= \sum_{T\in \SYT(n)} G_T\  \lambda(T) !.
\end{align*}
\end{corollary}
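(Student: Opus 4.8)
The plan is to read off the corollary by specializing Theorem~\ref{th:Cn} at $x=1$. When $x=1$, every factor $x^{\,n+1-\ell(\lambda(T))}$ on the right-hand side of~\eqref{eq:th:Cn} equals $1$, uniformly over all $T\in\SYT(n)$, so the right-hand side collapses to $\sum_{T\in\SYT(n)} G_T\,\lambda(T)!$, which is precisely the sum in the statement. Thus it remains only to evaluate the left-hand side, i.e.\ to show that $C_n(1)=(2n-1)!!$.

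For that I would use the recurrence~\eqref{recuCnx} with $k=2$, namely $C_{n+1}(x)=(2n+1)xC_n(x)+x(1-x)C_n'(x)$. Setting $x=1$ annihilates the second term and leaves $C_{n+1}(1)=(2n+1)\,C_n(1)$, with $C_0(1)=1$; iterating gives $C_n(1)=1\cdot 3\cdot 5\cdots(2n-1)=(2n-1)!!$. Equivalently, one may note that $C_n(1)=\#\mqn$ is the number of (ordinary) Stirling permutations of order $n$, and this count is $(2n-1)!!$ since a Stirling permutation of order $n+1$ is obtained from one of order $n$ by inserting the block $(n+1)(n+1)$ into one of the $2n+1$ gaps of the word, each insertion preserving the Stirling property and being reversible. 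Combining this evaluation with the displayed specialization of~\eqref{eq:th:Cn} yields $(2n-1)!!=\sum_{T\in\SYT(n)} G_T\,\lambda(T)!$.

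There is essentially no obstacle: the entire content lies in Theorem~\ref{th:Cn}, and the corollary is merely its value at $x=1$ together with the elementary identity $C_n(1)=(2n-1)!!$. The only point deserving an explicit line is the observation that at $x=1$ the monomial $x^{\,n+1-\ell(\lambda(T))}$ becomes $1$ for every tableau $T$, which is immediate.
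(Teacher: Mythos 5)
Your proposal is correct and matches the paper's approach: the corollary is obtained exactly by setting $x=1$ in Theorem~\ref{th:Cn}, with the paper implicitly using the standard fact $C_n(1)=(2n-1)!!$ that you verify via the recurrence~\eqref{recuCnx} (or the insertion argument for Stirling permutations). The extra justification of $C_n(1)=(2n-1)!!$ is a harmless elaboration, not a different route.
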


\begin{example}
	For $n=4$, the 10 standard Young tableaux and their $g$-indexes
	are listed in Figure~\ref{fig:Ytab:example}. We verify that	
	$C_4(x)=24x^4 + 58 x^3 + 22 x^2 +x$.
\end{example}

\begin{figure}[tbp]
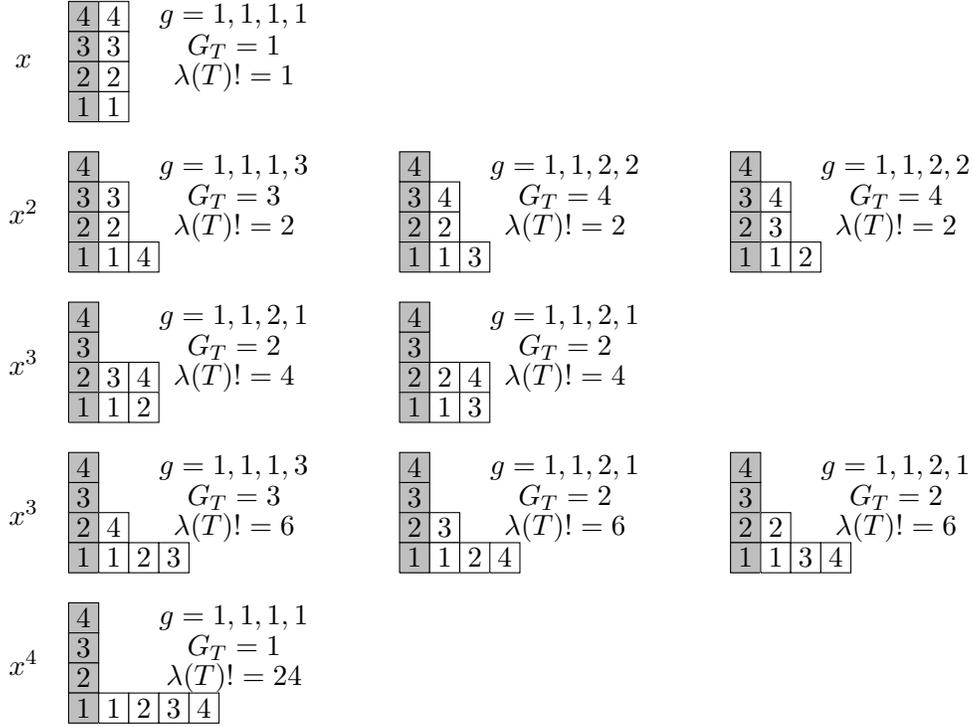

\begin{center}
% [inline block 1: 1 envs, 30219 chars -> data_tex | \begin{tikzpicture} % <<< \fill [gray!0](-0.8000,9.0000)--(-0.8000,9.4000)--(-0.4000,9.4000)--(-0.4000,9.0000)--(-0.8000...]
 % >>>
\end{center}
\caption{$\SYT(4)$ and $g$-indexes}
\label{fig:Ytab:example}
\end{figure}

We now present the fifth main result of this paper.
\begin{theorem}\label{th:An}
Let $A_n(x)$ be the Eulerian polynomials. Then we have
\begin{align}\label{eq:th:An}
	A_n(x)= \sum_{T\in \SYT(n)} G_T\ x^{n+1-\ell(\lambda(T))}.
\end{align}
\end{theorem}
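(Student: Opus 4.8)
The plan is to show that the polynomial $P_n(x):=\sum_{T\in\SYT(n)}G_T\,x^{\,n+1-\ell(\lambda(T))}$ on the right of~\eqref{eq:th:An} satisfies the same initial value and recurrence as $A_n(x)$. Applying $x\,d/dx$ to both sides of the $n$-th instance of~\eqref{Anx-def} gives, for $n\ge1$,
\[
A_{n+1}(x)=(n+1)\,x\,A_n(x)+x(1-x)\,A_n'(x),\qquad A_1(x)=x,
\]
and since $\SYT(1)$ has a single element, with $G_T=1$ and $\ell=1$, we get $P_1(x)=x$. It therefore remains to prove $P_{n+1}(x)=(n+1)xP_n(x)+x(1-x)P_n'(x)$.

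The main step is to record the effect of deleting the maximal letter. For $T\in\SYT(n+1)$, let $T'\in\SYT(n)$ be obtained by removing the box containing $n+1$; as $T$ ranges over $\SYT(n+1)$, that box ranges over all addable corners of $\lambda(T')$ as $T'$ ranges over $\SYT(n)$. Because $n+1$ is larger than every other entry, its box is never among those counted in $g_T(v)$ for any $v\le n$, and removing it changes neither the column immediately to the left of the column of $v$ nor the relevant portion of the special column; hence $g_T(v)=g_{T'}(v)$ for all $v\le n$ and $G_T=g_T(n+1)\,G_{T'}$. Writing $r:=\ell(\lambda(T'))$, the addable corners of $\lambda(T')$ are: the unique new-row corner, in column $1$ and row $r+1$, with $\ell(\lambda(T))=r+1$; and, for each distinct part size $s$ of $\lambda(T')$, a row-extension corner in column $s+1$, located in the lowest-indexed row of size $s$, with $\ell(\lambda(T))=r$.

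I would then compute $g_T(n+1)$ from the left-column description of the $g$-index. For the new-row corner the column to the left is the special column, which has $n+1$ boxes, so $g_T(n+1)$ equals the number of its boxes in rows $\ge r+1$, that is $n-r+1$. For the row-extension corner associated with a part size $s$ (placed in row $i$, where $i$ is the lowest index with $\lambda'_i=s$), the column to the left is column $s$ of $T'$, whose boxes in rows $\ge i$ are precisely the $m_s$ boxes lying in the rows of $\lambda'$ of size exactly $s$; summing over the distinct positive part sizes gives $\sum_s m_s=\ell(\lambda(T'))=r$. Collecting terms,
\[
P_{n+1}(x)=\sum_{T'\in\SYT(n)}G_{T'}\Bigl[(n-r+1)\,x^{\,n+1-r}+r\,x^{\,n+2-r}\Bigr],
\]
and since $P_n(x)=\sum_{T'}G_{T'}\,x^{\,n+1-r}$, the bracketed expression is exactly the coefficient of $G_{T'}$ in $(n+1)xP_n(x)+x(1-x)P_n'(x)$; this closes the induction.

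The delicate point is the evaluation of $g_T(n+1)$: one must check, directly from the definition (using the column immediately to the left, and the special column in the leftmost case), that the row-extension corner attached to $s$ contributes exactly the multiplicity $m_s$, and that these corners biject with the distinct positive part sizes of $\lambda(T')$. An equivalent and perhaps cleaner formulation is the identity $\sum_{T\in\SYT(n),\ \ell(\lambda(T))=r}G_T=\Eulerian{n}{\,n+1-r\,}$, which the same deletion argument proves against $\Eulerian{n+1}{i}=i\,\Eulerian{n}{i}+(n+2-i)\,\Eulerian{n}{i-1}$. This runs parallel to the proof of Theorem~\ref{th:Cn}: there the extra weight $\lambda(T)!$ is produced because extending a row of length $\lambda'_i$ multiplies $\lambda(T)!$ by $\lambda'_i+1$, and this converts the Eulerian recurrence above into the second-order Eulerian recurrence.
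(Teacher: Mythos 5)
Your proposal is correct, and it reaches \eqref{eq:th:An} by a genuinely different route than the paper. The paper's proof specializes the operator expansion \eqref{eq:uDG} (which rests on Theorem~\ref{th:kmu:tableau}) to Dumont's grammar $G=\{x\rightarrow y,\ y\rightarrow y\}$ of Proposition~\ref{Dumont2}, obtaining $A_n(x)$ as a sum of $G_Z\,x^{|\mu|_0+1}$ over all $k$-Young tableaux, and then collapses this sum onto standard Young tableaux through the key Lemma~\ref{lemmaGTZ}, $\sum_{Z\in\rho^{-1}(T)}G_Z=G_T$. You instead stay inside $\SYT(n)$ throughout: you show that $P_n(x)=\sum_{T}G_T\,x^{n+1-\ell(\lambda(T))}$ obeys $P_1(x)=x$ and $P_{n+1}(x)=(n+1)xP_n(x)+x(1-x)P_n'(x)$, the recurrence that follows from \eqref{Anx-def} (equivalently, \eqref{recuCnx} with $k=1$), by deleting the letter $n+1$. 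Your corner bookkeeping is right under the paper's conventions (rows numbered from the bottom, the $g$-index read off the column immediately to the left, with the special column playing the role of column $0$): the new-row corner gives $g_T(n+1)=n+1-r$ and raises $\ell$ by one; the corner attached to a distinct part size $s$ sits over exactly the $m_s$ boxes of column $s$ lying in the rows of size $s$, so $g_T(n+1)=m_s$, and $\sum_s m_s=r$; together with $G_T=g_T(n+1)G_{T'}$ (the box of $n+1$ is never counted for smaller entries --- this exclusion by the entry condition, rather than the neighboring column being literally unchanged, is the precise reason $g_T(v)=g_{T'}(v)$ for $v\le n$) this yields the bracket $(n+1-r)x^{n+1-r}+r\,x^{n+2-r}$, which matches the action of the recurrence on $x^{n+1-r}$, so the induction closes. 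Comparing the two approaches: the paper's machinery is uniform, producing Theorems~\ref{th:Cn}, \ref{th:An} and \ref{thm:Sn} from the single Lemma~\ref{lemmaGTZ} by changing the grammar (and that lemma is itself proved by a removal-of-$n$ analysis not unlike yours, but carried out on $k$-Young tableaux with the $\Gamma_1,\dots,\Gamma_4$ case split); your induction is more elementary and self-contained, needing neither inversion sequences, nor $k$-Young tableaux, nor context-free grammars, and, as you observe, the same corner analysis weighted by $\lambda(T)!$ (an added box in a row of length $s$ multiplies the weight by $s+1$, so the corners contribute $\sum_s m_s(s+1)=n+r$) gives an independent proof of Theorem~\ref{th:Cn} against \eqref{recuCnx} with $k=2$.
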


So the following corollary is immediate.
\begin{corollary}
We have
\begin{align*}
	n! &= \sum_{T\in \SYT(n)} G_T.
\end{align*}
\end{corollary}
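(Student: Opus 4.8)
The plan is to read the identity off from Theorem~\ref{th:An} by setting $x=1$. On the left-hand side, the definition $A_n(x)=\sum_{\pi\in\msn}x^{\des(\pi)}$ gives $A_n(1)=\sum_{\pi\in\msn}1=|\msn|=n!$. On the right-hand side of~\eqref{eq:th:An}, each weight $x^{n+1-\ell(\lambda(T))}$ specializes to $1$, so the sum over standard Young tableaux collapses to $\sum_{T\in\SYT(n)}G_T$. Equating the two evaluations yields $n!=\sum_{T\in\SYT(n)}G_T$, which is the assertion.

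So there is essentially no obstacle here: all of the substance is already in Theorem~\ref{th:An} itself, and the corollary is a one-line specialization, exactly parallel to the way the corollary following Theorem~\ref{th:Cn} is obtained from that theorem (there one uses $C_n(1)=(2n-1)!!$ and keeps the extra factor $\lambda(T)!$ instead of the trivial weight). If a self-contained proof were wanted, one could instead look for a bijective argument: since $G_T=\prod_{v=1}^{n}g_T(v)$, the identity $n!=\sum_{T\in\SYT(n)}G_T$ amounts to recovering each permutation of $[n]$ by making, for $v=1,2,\ldots,n$ in turn, one of $g_T(v)$ local choices while growing a standard Young tableau, and the insertion-type recursion underlying Theorem~\ref{th:kmu:tableau} indicates how such a correspondence should be organized. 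I would, however, simply record the short specialization argument, since the full strength of Theorem~\ref{th:An} is already available.
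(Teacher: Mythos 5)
Your proof is correct and matches the paper's: the corollary is obtained exactly by setting $x=1$ in Theorem~\ref{th:An}, using $A_n(1)=n!$, which is precisely what the paper means by calling the corollary immediate. The additional bijective remarks are fine but not needed.
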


\begin{example}
	For $n=4$, the 10 standard Young tableaux and their $g$-indexes
	are listed in Figure \ref{fig:Ytab:example}. We verify that	
	$A_4(x)=x^4+11x^3+11x^2+x$.
\end{example}

Let $\pi\in\msn$.
We say that $\pi$ has no {\it double descents} if there is no
index $i\in [n-2]$ such that $\pi(i)>\pi(i+1)>\pi(i+2)$.
The permutation $\pi$ is called {\it simsun} if for each $k\in [n]$, the
subword of $\pi$ restricted to $[k]$ (in the order
they appear in $\pi$) contains no double descents.
Simsun permutations are useful in describing the action of the symmetric group on the
maximal chains of the partition lattice (see~\cite{Sundaram1994,Sundaram1995}).
There has been much recent work devoted to simsun permutation and its variations, see~\cite{Chow11,Ma16} and references therein.

Denote by $\rs_n$ the set of simsun permutations in $\msn$.
Let $$S_n(x)=\sum_{\pi\in\rs_n}x^{\des(\pi)}=\sum_{i=1}^{\lrf{(n+2)/2}}S(n,i)x^i$$
be the descent polynomial of simsum permutations.
It follows from~\cite[Theorem~1]{Chow11} that the polynomials $S_n(x)$ satisfy the recurrence relation
\begin{equation*}\label{Snx-recu}
S_n(x)=(n+1)xS_{n-1}(x)+x(1-2x)S_{n-1}'(x)
\end{equation*}
for $n\geq 2$, with the initial conditions $S_{1}(x)=x,S_2(x)=x+x^2$ and $S_3(x)=x+4x^2$.

An {\it increasing tree} on $[n]$ is a rooted tree with vertex set $\{0,1,2,\ldots,n\}$ in which the
labels of the vertices are increasing along any path from the root $0$.
The {\it degree} of a vertex in a rooted tree is the number of its children.
A {\it 0-1-2 increasing tree} is an increasing tree in which the degree of any vertex is
at most two. It should be noted that the number $S(n,i)$ counts 0-1-2 increasing trees on $[n]$ with $i$ leaves (see~\cite[A094503]{Sloane}), and
the polynomial $S_n(x)$ is also known as the
{\it Andr\'e polynomial} (see~\cite{Chen17,Foata01}).

Now we present the sixth main result of this paper.
\begin{theorem}\label{thm:Sn}
Let $S_n(x)$ be the Andr\'e polynomials.
For $n\geq 1$, we have
\begin{equation}\label{SnxSYT}
	S_n(x)= \sum_{T} G_T\ x^{n+1-\ell(\lambda(T))},
\end{equation}
where the summation is taken
over all Young tableaux in $\SYT(n)$ with at most two columns.
\end{theorem}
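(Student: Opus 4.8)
The plan is to show that the right-hand side of~\eqref{SnxSYT}, namely
\[
\Sigma_n(x):=\sum_{T}G_T\,x^{\,n+1-\ell(\lambda(T))}
\]
with the sum over $T\in\SYT(n)$ having at most two columns, satisfies the same data as the Andr\'e polynomial $S_n(x)$: $\Sigma_1(x)=x$ and
\[
\Sigma_n(x)=(n+1)x\,\Sigma_{n-1}(x)+x(1-2x)\,\Sigma_{n-1}'(x)\qquad(n\ge 2).
\]
The first, purely bookkeeping, observation is that a shape with at most two columns has the form $\lambda=(2^{a}1^{b})$ with $2a+b=n$ and $\ell(\lambda)=a+b$, so that $n+1-\ell(\lambda)=a+1$; hence $\Sigma_n(x)=\sum_T G_T\,x^{\,a(T)+1}$, where $a(T)$ is the number of rows of $T$ of length $2$.

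The combinatorial input is a deletion/insertion bijection on the largest letter. If $T\in\SYT(n)$ has at most two columns, the cell containing $n$ is an outer corner, and for such a shape there are exactly two possibilities: either $n$ lies at the top of the first column (in a row of length $1$), or $n$ lies at the right end of the top-most row of length $2$. Deleting $n$ yields $T'\in\SYT(n-1)$, again with at most two columns. Conversely, starting from $T'$ of shape $(2^{a'}1^{b'})$, one may re-insert $n$ in a new row on top (always legal) and, when $b'\ge1$, also at the right end of the bottom-most row of length $1$ (legal precisely because then the cell to the left, and if $a'\ge1$ the cell below, are already present). In this way $T\mapsto(T',\varepsilon)$ is a bijection, where $\varepsilon$ ranges over $\{\textrm{new-row}\}$, enlarged by $\{\textrm{extend}\}$ when $b'\ge1$.

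Next I would track the weights. Inserting $n$ changes no $g_T(v)$ with $v<n$: by the definition of the $g$-index only cells carrying a letter $\le v$ are counted (together with the relevant, unchanged portion of the special column), and the new cell carries the letter $n>v$. Hence $G_T=G_{T'}\cdot g_T(n)$, and inspecting the column immediately to the left of $n$ (the special column in the new-row case) gives $g_T(n)=a'+1$ with $a(T)=a'$ in the new-row case, and $g_T(n)=b'$ with $a(T)=a'+1$ in the extend case. Now fix a term $G_{T'}x^{p}$ of $\Sigma_{n-1}(x)$, so $a'=p-1$ and $b'=n-1-2a'=n+1-2p$. The new-row contributions sum to $\sum_{T'}G_{T'}\,p\,x^{p}=x\,\Sigma_{n-1}'(x)$, and the extend contributions (the vanishing convention absorbing the case $b'=0$) sum to $\sum_{T'}G_{T'}(n+1-2p)x^{p+1}=(n+1)x\,\Sigma_{n-1}(x)-2x^{2}\Sigma_{n-1}'(x)$. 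Adding the two yields the displayed recurrence; since $\Sigma_1(x)=x=S_1(x)$, induction gives $\Sigma_n=S_n$ for all $n\ge1$.

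The step I expect to need the most care is the case analysis behind the bijection — confirming that in a two-column standard Young tableau the largest letter occupies one of only two cells, that the re-insertion is well defined, and that its legality is exactly the condition $b'\ge1$ — together with the accompanying evaluation of $g_T(n)$ via the special column in the two cases; the remaining manipulations are routine. A more computational alternative, parallel to Sections~\ref{section04} and~\ref{section05}, would be to feed a suitable pair $(c,f)$ into~\eqref{eq:cD:p} and Theorem~\ref{th:kmu:tableau} and then transfer from $k$-Young tableaux of the relevant shapes to two-column standard Young tableaux, but the recurrence route above is shorter and self-contained.
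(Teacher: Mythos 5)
Your argument is correct, and it takes a genuinely different route from the paper. The paper proves Theorem~\ref{thm:Sn} by feeding the grammar $G_2=\{x\rightarrow y,\ y\rightarrow 1\}$ (Propositions~\ref{grammar01:Snk}--\ref{grammar02:Snk}) into the expansion~\eqref{eq:uDG}, observing that only types $(k,\mu)$ with $\mu_j\le 2$ and $k\le 2$ survive, and then transferring from $k$-Young tableaux to standard Young tableaux via the map $\rho$ and Lemma~\ref{lemmaGTZ}. You instead bypass the grammar and the $k$-Young tableaux entirely: you show the two-column SYT sum $\Sigma_n(x)$ satisfies the Chow--Shiu recurrence $S_n(x)=(n+1)xS_{n-1}(x)+x(1-2x)S_{n-1}'(x)$ quoted in the paper, by deleting/inserting the largest letter. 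Your corner analysis is right (in a shape $(2^a1^b)$ the letter $n$ sits either atop column $1$ in a length-one row, or at the end of the topmost length-two row), and your weight bookkeeping is right once one reads Definition~\ref{def-g-index01} as the figures intend, i.e.\ $g_T(v)$ counts entries $\le v$ in the column immediately to the left of $v$ (the special column for column~$1$) in rows weakly above $v$; with that reading one indeed gets $G_T=G_{T'}\,g_T(n)$, $g_T(n)=a'+1$ in the new-row case and $g_T(n)=b'$ in the extend case, and the two contributions assemble to $x\Sigma_{n-1}'(x)$ and $(n+1)x\Sigma_{n-1}(x)-2x^2\Sigma_{n-1}'(x)$ exactly as you compute (I checked $n=2,3$ numerically). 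What the two approaches buy: yours is shorter and self-contained for this theorem, but it consumes the known recurrence for $S_n(x)$ as an input and proves only this case, whereas the paper's route is uniform — one transfer lemma (Lemma~\ref{lemmaGTZ}) simultaneously yields Theorems~\ref{th:Cn}, \ref{th:An} and \ref{thm:Sn} and ties the SYT expansions back to the $(cD)^n$ calculus; note also that your multiplicative step $G_T=G_{T'}g_T(n)$ is in essence the same observation that powers the induction in Lemma~\ref{lemmaGTZ}, restricted to two-column shapes.
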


We say that $\pi\in\msn$ is {\it alternating} if $\pi(1)>\pi(2)<\pi(3)>\cdots \pi(n)$. In other words, $\pi(i)<\pi({i+1})$ if $i$ is even and $\pi(i)>\pi({i+1})$ if $i$ is odd.
The {\it Euler number} $E_n$ is the number of alternating permutations in $\msn$ (see~\cite{Stanley}).
A remarkable property of simsun permutations is that $\#\rs_n=E_{n+1}$ (see~\cite[p.~267]{Sundaram1994}).
So we get the following corollary.
\begin{corollary}
Let $E_n$ be the $n$th Euler number. Then we have
\begin{align*}
	E_{n+1} &= \sum_{T} G_T,
\end{align*}
where the summation is taken
over all Young tableaux in $\SYT(n)$ with at most two columns.
\end{corollary}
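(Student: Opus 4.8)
The plan is to obtain this corollary as a direct specialization of Theorem~\ref{thm:Sn} at $x=1$, combining it with two facts already recorded in the excerpt. First I would substitute $x=1$ into the identity
\begin{equation*}
	S_n(x)= \sum_{T} G_T\ x^{n+1-\ell(\lambda(T))},
\end{equation*}
where the sum runs over all $T\in\SYT(n)$ with at most two columns. On the right-hand side, every power $x^{n+1-\ell(\lambda(T))}$ collapses to $1$, so the right-hand side becomes exactly $\sum_{T} G_T$ over the same set of tableaux. Since the displayed identity is an identity of polynomials, this substitution is legitimate.

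For the left-hand side, I would recall the definition of the Andr\'e polynomial as the descent generating polynomial of simsun permutations, namely $S_n(x)=\sum_{\pi\in\rs_n}x^{\des(\pi)}$. Setting $x=1$ gives $S_n(1)=\sum_{\pi\in\rs_n}1=\#\rs_n$, the total number of simsun permutations in $\msn$. The last ingredient is the enumerative fact, stated in the excerpt immediately before the corollary and attributed to Sundaram~\cite{Sundaram1994}, that $\#\rs_n=E_{n+1}$, the $(n+1)$-st Euler number.

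Stringing these together yields $E_{n+1}=\#\rs_n=S_n(1)=\sum_{T}G_T$, which is precisely the claimed identity. There is essentially no obstacle here: all of the combinatorial content resides in Theorem~\ref{thm:Sn}, which we take as given, and the corollary is a pure evaluation at $x=1$. The only points requiring care are that the substitution respects the polynomial identity and that the two auxiliary facts—the definition $S_n(1)=\#\rs_n$ and the equinumerosity $\#\rs_n=E_{n+1}$—are quoted correctly; both are routine.
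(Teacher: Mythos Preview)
Your proposal is correct and follows exactly the approach the paper takes: the corollary is obtained by setting $x=1$ in Theorem~\ref{thm:Sn} and invoking the fact $\#\rs_n=E_{n+1}$ stated just before it.
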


An index $i\in [n]$ is a {\it peak} (resp.~{\it exterior double descent})
of $\pi$ if $\pi(i-1)<\pi(i)>\pi(i+1)$ (resp. $\pi(i-1)>\pi(i)>\pi(i+1)$), where $\pi(0)=\pi(n+1)=0$.
Let $a(n,i)$ be the number of permutations in $\msn$ with $i$ peaks and without exterior double descents.
The following gamma expansion of Eulerian polynomials was first given by Foata and Sch\"utzenberger~\cite{Foata70}:
\begin{equation*}\label{Anx-gamma}
A_n(x)=\sum_{i=1}^{\lrf{({n+1})/{2}}}a(n,i)x^i(1+x)^{n+1-2i},
\end{equation*}
which implies that Eulerian polynomials are symmetric and unimodal. In recent years there has been much interest in studying
gamma expansions of combinatorial polynomials, see~\cite{Lin15,Ma1902} and the references therein.
Combining~\cite[Corollary~3.2]{Branden08} and~\cite[Proposition~1]{Ma16}, we get another gamma expansion of Eulerian polynomials:
$$A_{n+1}(x)=\sum_{i=1}^{\lrf{(n+2)/2}}2^{i-1}S(n,i)x^i(1+x)^{n+2-2i}.$$

Let $T$ be a Young tableau in $\SYT(n)$ with at most two columns. If $\lambda(T)=(1^{n-2i+2}2^{i-1})$, then $n+1-\ell(\lambda(T))=i$, where
$1\leq i\leq \lrf{(n+2)/2}$. Then by using~\eqref{SnxSYT}, we immediately get the following result.
\begin{theorem}
Let $S(n,i)$ be the number of 0-1-2 increasing trees on $[n]$ with $i$ leaves.
Then
\begin{equation*}\label{RnxSYT}
\sum_{i=1}^{\lrf{(n+2)/2}}2^{i-1}S(n,i)x^i= \sum_{T} G_T\lambda(T)!\ x^{n+1-\ell(\lambda(T))},
\end{equation*}
where the summation is taken
over all Young tableaux in $\SYT(n)$ with at most two columns.
\end{theorem}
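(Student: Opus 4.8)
The plan is to read off this identity directly from Theorem~\ref{thm:Sn}; all the genuine combinatorial content sits in that theorem, and what remains is a rearrangement of the right-hand side of~\eqref{SnxSYT} by sorting the two-column tableaux according to shape. First I would record the shape bookkeeping. A standard Young tableau $T\in\SYT(n)$ with at most two columns has shape $\lambda(T)=(2^{i-1}1^{n-2i+2})$ for a unique integer $i$ with $1\leq i\leq\lrf{(n+2)/2}$, namely $i-1$ is the number of length-$2$ rows. For such a shape one has $\ell(\lambda(T))=(i-1)+(n-2i+2)=n-i+1$, hence $n+1-\ell(\lambda(T))=i$, and $\lambda(T)!=(2!)^{i-1}(1!)^{n-2i+2}=2^{i-1}$.

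Next I would extract coefficients in Theorem~\ref{thm:Sn}. Grouping the sum in~\eqref{SnxSYT} by the common exponent $i=n+1-\ell(\lambda(T))$ gives $S_n(x)=\sum_i x^i\sum_{T:\ \lambda(T)=(2^{i-1}1^{n-2i+2})}G_T$, so comparing with the defining expansion $S_n(x)=\sum_i S(n,i)x^i$ — and using the already-recorded fact that $S(n,i)$ also counts 0-1-2 increasing trees on $[n]$ with $i$ leaves — we obtain
\[
S(n,i)=\sum_{T:\ \lambda(T)=(2^{i-1}1^{n-2i+2})}G_T.
\]
Multiplying both sides by $2^{i-1}$, which by the first paragraph equals $\lambda(T)!$ for every $T$ appearing in the inner sum, and summing over $i$, the two nested summations recombine into a single summation over all two-column tableaux of size $n$:
\[
\sum_{i=1}^{\lrf{(n+2)/2}}2^{i-1}S(n,i)x^i=\sum_i x^i\sum_{T:\ \lambda(T)=(2^{i-1}1^{n-2i+2})}\lambda(T)!\,G_T=\sum_{T}G_T\,\lambda(T)!\,x^{n+1-\ell(\lambda(T))}.
\]

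There is essentially no obstacle here: the result is a formal corollary of Theorem~\ref{thm:Sn}, and the single point needing care is the elementary observation that $\lambda(T)!=2^{\,n-\ell(\lambda(T))}$ for every two-column shape, which makes the weighting $2^{i-1}$ on the left coincide with the weighting $\lambda(T)!$ on the right. The actual work — proving~\eqref{SnxSYT} itself, via the recurrence for the Andr\'e polynomials $S_n(x)$ together with the $k$-Young-tableau machinery of Section~\ref{section3} — is carried out in the proof of Theorem~\ref{thm:Sn} in Section~\ref{section06}.
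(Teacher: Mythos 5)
Your proposal is correct and follows essentially the same route as the paper: the paper also deduces this identity directly from Theorem~\ref{thm:Sn}, noting that a two-column shape is $(2^{i-1}1^{n-2i+2})$ with $n+1-\ell(\lambda(T))=i$ and $\lambda(T)!=2^{i-1}$, so the weighted sum over $\SYT(n)$ with at most two columns is exactly $\sum_i 2^{i-1}S(n,i)x^i$. Your write-up just makes the coefficient extraction and regrouping explicit, which matches the paper's ``immediate'' step.
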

\section{Proof of Theorem~\ref{th:Cn}}\label{section04}
Setting $c=x/{(1-x)}$ and $f=1/{(1-x)}$, then we have
\begin{equation*}
		c_j=\frac{j!}{(1-x)^{j+1}} \quad (j\geq 1); \qquad
		\mathbf{f}_k=\frac{k!}{(1-x)^{k+1}} \quad  (k\geq 0).
\end{equation*}
By using~\eqref{eq:cD:p}, we obtain
	\begin{align*}
	\left(\frac{x}{1-x} D\right)^n \frac{1}{1-x}
			&= \sum_{(k, \mu)\vdash n} p_{k, \mu}\cdot c c_{\mu_1}c_{\mu_2}\cdots c_{\mu_{n-1}}  \mathbf{f}_k\\
			&= \sum_{(k, \mu)\vdash n} p_{k, \mu}\cdot
			\frac{x^{|\mu|_0+1}}{1-x} \frac{\mu_1!}{(1-x)^{\mu_1+1}}
			\cdots \frac{\mu_{n-1}!}{(1-x)^{\mu_{n-1}+1}}\frac{k!}{(1-x)^{k+1}}\\
			&= \frac{1}{(1-x)^{2n+1}} \sum_{(k, \mu)\vdash n} p_{k, \mu}\cdot
			k! {\mu_1!}
			\cdots {\mu_{n-1}!}{x^{|\mu|_0+1}},
	\end{align*}
	where the summation is taken over all types $(k,\mu)$ of $n$.
Combining~\eqref{eq:Cnx} and Theorem~\ref{th:kmu:tableau},
	we have
	\begin{align}	
		C_n(x)
		&= \sum_{(k, \mu)\vdash n} p_{k, \mu}\cdot k! {\mu_1!} \cdots {\mu_{n-1}!}{x^{|\mu|_0+1}}\nonumber\\
		&= \sum_{(k, \mu)\vdash n} \sum_Z G_Z\cdot k! {\mu_1!} \cdots {\mu_{n-1}!}{x^{|\mu|_0+1}}.\label{eq:C:GZ}
	\end{align}

In view of \eqref{eq:th:Cn} and \eqref{eq:C:GZ}, we need to establish some relations between $k$-Young tableaux and standard Young tableaux.
Let $Z$ be a $k$-Young tableau of shape $(k,\mu)$. We define $T=\rho(Z)$ to be the unique standard Young tableau
such that the sets of the letters in the $j$-th column in $Z$ and $T$ are the same for all $j$.
Let us list some basic facts of this map $Z\mapsto T=\rho(Z)$:
\begin{itemize}
  \item [$(i)$] We can obtain $T$ from $Z$ by ordering the letters in each column in increasing order. One can check that if $T$ is obtained in this way, then $T$ is a standard Young tableau;
  \item [$(ii)$] The partition $\lambda(T)$ is the
	decreasing ordering of the sequence $(k, \mu_1, \ldots, \mu_{n-1})$, removing the $0$'s at the end.
	Hence, $\lambda(T)!=k!\mu_1!\mu_2!\cdots \mu_{n-1}!$;
  \item [$(iii)$] We have $n-\ell(\lambda(T))= |\mu|_0$;
  \item [$(iv)$] In general $G_Z \not= G_T$.
\end{itemize}

For example, take the $k$-Young tableau given in Figure \ref{fig:Z:diag}, we obtain the standard Young tableau
	given in Figure \ref{fig:rhoZ}.
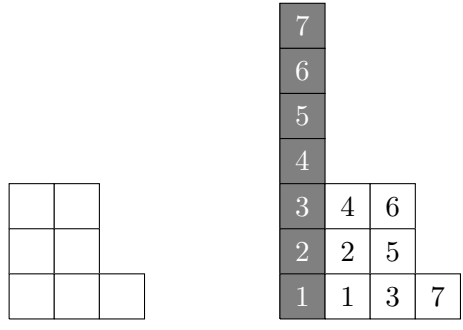
\begin{figure}[tbp]
\begin{center}
\begin{tikzpicture} % <<<
\fill [gray!0](0.0000,1.8000)--(0.0000,2.4000)--(0.6000,2.4000)--(0.6000,1.8000)--(0.0000,1.8000);
\draw [gray!200](0.0000,1.8000)--(0.0000,2.4000)--(0.6000,2.4000)--(0.6000,1.8000)--(0.0000,1.8000);
\draw [gray!200] (0.3000, 2.1000) node [] {};
\fill [gray!0](0.6000,1.8000)--(0.6000,2.4000)--(1.2000,2.4000)--(1.2000,1.8000)--(0.6000,1.8000);
\draw [gray!200](0.6000,1.8000)--(0.6000,2.4000)--(1.2000,2.4000)--(1.2000,1.8000)--(0.6000,1.8000);
\draw [gray!200] (0.9000, 2.1000) node [] {};
\fill [gray!0](0.0000,1.2000)--(0.0000,1.8000)--(0.6000,1.8000)--(0.6000,1.2000)--(0.0000,1.2000);
\draw [gray!200](0.0000,1.2000)--(0.0000,1.8000)--(0.6000,1.8000)--(0.6000,1.2000)--(0.0000,1.2000);
\draw [gray!200] (0.3000, 1.5000) node [] {};
\fill [gray!0](0.6000,1.2000)--(0.6000,1.8000)--(1.2000,1.8000)--(1.2000,1.2000)--(0.6000,1.2000);
\draw [gray!200](0.6000,1.2000)--(0.6000,1.8000)--(1.2000,1.8000)--(1.2000,1.2000)--(0.6000,1.2000);
\draw [gray!200] (0.9000, 1.5000) node [] {};
\fill [gray!0](0.0000,0.6000)--(0.0000,1.2000)--(0.6000,1.2000)--(0.6000,0.6000)--(0.0000,0.6000);
\draw [gray!200](0.0000,0.6000)--(0.0000,1.2000)--(0.6000,1.2000)--(0.6000,0.6000)--(0.0000,0.6000);
\draw [gray!200] (0.3000, 0.9000) node [] {};
\fill [gray!0](0.6000,0.6000)--(0.6000,1.2000)--(1.2000,1.2000)--(1.2000,0.6000)--(0.6000,0.6000);
\draw [gray!200](0.6000,0.6000)--(0.6000,1.2000)--(1.2000,1.2000)--(1.2000,0.6000)--(0.6000,0.6000);
\draw [gray!200] (0.9000, 0.9000) node [] {};
\fill [gray!0](1.2000,0.6000)--(1.2000,1.2000)--(1.8000,1.2000)--(1.8000,0.6000)--(1.2000,0.6000);
\draw [gray!200](1.2000,0.6000)--(1.2000,1.2000)--(1.8000,1.2000)--(1.8000,0.6000)--(1.2000,0.6000);
\draw [gray!200] (1.5000, 0.9000) node [] {};
None
\fill [gray!100](3.6000,4.2000)--(3.6000,4.8000)--(4.2000,4.8000)--(4.2000,4.2000)--(3.6000,4.2000);
\draw [gray!200](3.6000,4.2000)--(3.6000,4.8000)--(4.2000,4.8000)--(4.2000,4.2000)--(3.6000,4.2000);
\draw [gray!0] (3.9000, 4.5000) node [] {7};
\fill [gray!100](3.6000,3.6000)--(3.6000,4.2000)--(4.2000,4.2000)--(4.2000,3.6000)--(3.6000,3.6000);
\draw [gray!200](3.6000,3.6000)--(3.6000,4.2000)--(4.2000,4.2000)--(4.2000,3.6000)--(3.6000,3.6000);
\draw [gray!0] (3.9000, 3.9000) node [] {6};
\fill [gray!100](3.6000,3.0000)--(3.6000,3.6000)--(4.2000,3.6000)--(4.2000,3.0000)--(3.6000,3.0000);
\draw [gray!200](3.6000,3.0000)--(3.6000,3.6000)--(4.2000,3.6000)--(4.2000,3.0000)--(3.6000,3.0000);
\draw [gray!0] (3.9000, 3.3000) node [] {5};
\fill [gray!100](3.6000,2.4000)--(3.6000,3.0000)--(4.2000,3.0000)--(4.2000,2.4000)--(3.6000,2.4000);
\draw [gray!200](3.6000,2.4000)--(3.6000,3.0000)--(4.2000,3.0000)--(4.2000,2.4000)--(3.6000,2.4000);
\draw [gray!0] (3.9000, 2.7000) node [] {4};
\fill [gray!100](3.6000,1.8000)--(3.6000,2.4000)--(4.2000,2.4000)--(4.2000,1.8000)--(3.6000,1.8000);
\draw [gray!200](3.6000,1.8000)--(3.6000,2.4000)--(4.2000,2.4000)--(4.2000,1.8000)--(3.6000,1.8000);
\draw [gray!0] (3.9000, 2.1000) node [] {3};
\fill [gray!100](3.6000,1.2000)--(3.6000,1.8000)--(4.2000,1.8000)--(4.2000,1.2000)--(3.6000,1.2000);
\draw [gray!200](3.6000,1.2000)--(3.6000,1.8000)--(4.2000,1.8000)--(4.2000,1.2000)--(3.6000,1.2000);
\draw [gray!0] (3.9000, 1.5000) node [] {2};
\fill [gray!100](3.6000,0.6000)--(3.6000,1.2000)--(4.2000,1.2000)--(4.2000,0.6000)--(3.6000,0.6000);
\draw [gray!200](3.6000,0.6000)--(3.6000,1.2000)--(4.2000,1.2000)--(4.2000,0.6000)--(3.6000,0.6000);
\draw [gray!0] (3.9000, 0.9000) node [] {1};
None
\fill [gray!0](4.2000,1.8000)--(4.2000,2.4000)--(4.8000,2.4000)--(4.8000,1.8000)--(4.2000,1.8000);
\draw [gray!200](4.2000,1.8000)--(4.2000,2.4000)--(4.8000,2.4000)--(4.8000,1.8000)--(4.2000,1.8000);
\draw [gray!200] (4.5000, 2.1000) node [] {4};
\fill [gray!0](4.8000,1.8000)--(4.8000,2.4000)--(5.4000,2.4000)--(5.4000,1.8000)--(4.8000,1.8000);
\draw [gray!200](4.8000,1.8000)--(4.8000,2.4000)--(5.4000,2.4000)--(5.4000,1.8000)--(4.8000,1.8000);
\draw [gray!200] (5.1000, 2.1000) node [] {6};
\fill [gray!0](4.2000,1.2000)--(4.2000,1.8000)--(4.8000,1.8000)--(4.8000,1.2000)--(4.2000,1.2000);
\draw [gray!200](4.2000,1.2000)--(4.2000,1.8000)--(4.8000,1.8000)--(4.8000,1.2000)--(4.2000,1.2000);
\draw [gray!200] (4.5000, 1.5000) node [] {2};
\fill [gray!0](4.8000,1.2000)--(4.8000,1.8000)--(5.4000,1.8000)--(5.4000,1.2000)--(4.8000,1.2000);
\draw [gray!200](4.8000,1.2000)--(4.8000,1.8000)--(5.4000,1.8000)--(5.4000,1.2000)--(4.8000,1.2000);
\draw [gray!200] (5.1000, 1.5000) node [] {5};
\fill [gray!0](4.2000,0.6000)--(4.2000,1.2000)--(4.8000,1.2000)--(4.8000,0.6000)--(4.2000,0.6000);
\draw [gray!200](4.2000,0.6000)--(4.2000,1.2000)--(4.8000,1.2000)--(4.8000,0.6000)--(4.2000,0.6000);
\draw [gray!200] (4.5000, 0.9000) node [] {1};
\fill [gray!0](4.8000,0.6000)--(4.8000,1.2000)--(5.4000,1.2000)--(5.4000,0.6000)--(4.8000,0.6000);
\draw [gray!200](4.8000,0.6000)--(4.8000,1.2000)--(5.4000,1.2000)--(5.4000,0.6000)--(4.8000,0.6000);
\draw [gray!200] (5.1000, 0.9000) node [] {3};
\fill [gray!0](5.4000,0.6000)--(5.4000,1.2000)--(6.0000,1.2000)--(6.0000,0.6000)--(5.4000,0.6000);
\draw [gray!200](5.4000,0.6000)--(5.4000,1.2000)--(6.0000,1.2000)--(6.0000,0.6000)--(5.4000,0.6000);
\draw [gray!200] (5.7000, 0.9000) node [] {7};
None
\end{tikzpicture} % >>>
\end{center}
\caption{$T=\rho(Z)$ for $Z$ given in Figure \ref{fig:Z:diag}}
\label{fig:rhoZ}
\end{figure}
However the map $\rho$ is not bijective. Let $$\rho^{-1}(T) = \{(k,\mu, Z) \mid \rho(Z)=T\}.$$ By the above properties of $\rho$ and \eqref{eq:C:GZ}, we have
	\begin{align}	
		C_n(x)
		&= \sum_{T\in \SYT(n)}\sum_{(k, \mu, Z)\in \rho^{-1}(T)}  G_Z\cdot k! {\mu_1!} \cdots {\mu_{n-1}!}{x^{|\mu|_0+1}}\nonumber\\
		&= \sum_{T\in \SYT(n)} \lambda(T)! x^{n+1-\ell(\lambda(T))}\sum_{(k, \mu, Z)\in \rho^{-1}(T)}  G_Z.\label{eq:C:TZ}
	\end{align}

The following lemma is fundamental.
\begin{lemma}\label{lemmaGTZ}
For each standard Young tableau $T$, we have
\begin{equation}\label{eq:GTGZ}
	\sum_{Z\in \rho^{-1}(T)} G_Z = G_T,
\end{equation}
where we write	
${Z\in \rho^{-1}(T)} $
instead of
${(k,\mu, Z)\in \rho^{-1}(T)} $ since we can recover $(k, \mu)$ from $Z$.
\end{lemma}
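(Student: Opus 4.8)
The plan is to prove \eqref{eq:GTGZ} by induction on $n$, deleting the largest letter $n$ from $T$ and from each $Z\in\rho^{-1}(T)$ (the case $n=1$ being clear).

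The first ingredient is a \emph{locality} property of the $g$-index: for $v\in[n-1]$ the number $g_T(v)$ is computed from cells carrying entries $\le v$ (together with boxes of the special column, which always has $n\ge v$ boxes), and none of these is disturbed when the corner box occupied by $n$ is removed. Hence $g_T(v)=g_{T'}(v)$ for $v<n$, where $T'=T\setminus\{n\}\in\SYT(n-1)$, so that $G_T=g_T(n)\,G_{T'}$. The same remark, applied now to a $k$-Young tableau, yields $G_Z=g_Z(n)\,G_{Z'}$, where $Z'$ is obtained from $Z$ by deleting the box containing $n$. Since $n$ is the largest letter and $\rho$ sorts columns, in any $Z\in\rho^{-1}(T)$ the letter $n$ occupies either the last box of the bottom row (which forces the bottom-row length $k$ to equal the column index $c^\ast$ of $n$ in $T$) or the top box of the $c^\ast$-th column of the top Young tableau; in both cases that box is removable, $Z'$ is again a Young tableau of this type, and $\rho(Z')=T'$. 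Thus $Z\mapsto Z'=:\delta(Z)$ is a well-defined map $\rho^{-1}(T)\to\rho^{-1}(T')$.

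Splitting the left side of \eqref{eq:GTGZ} over the fibres of $\delta$, using $G_Z=g_Z(n)\,G_{\delta(Z)}$ and the inductive hypothesis $\sum_{Z'\in\rho^{-1}(T')}G_{Z'}=G_{T'}$, the identity reduces to the local statement that for every $Z'\in\rho^{-1}(T')$,
\[
\sum_{Z\in\delta^{-1}(Z')} g_Z(n)=g_T(n).
\]
To establish this I would list the admissible ways of inserting $n$ into a fixed $Z'$. Writing $(c^\ast,r^\ast)$ for the box of $n$ in $T$ and $k'$ for the bottom-row length of $Z'$: the letter $n$ may be appended as a new last box of the bottom row exactly when $c^\ast=k'+1$, contributing $g_Z(n)=1$; and it may always be placed on top of the $c^\ast$-th column of the top Young tableau, contributing a $g_Z(n)$ equal to the number of boxes of the $(c^\ast-1)$-st column of the resulting $Z$ lying (in overall row index) at or above the box of $n$, or the analogous count in the special column when $c^\ast=1$. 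A careful bookkeeping of the overall row indices, accounting for the shift produced by the bottom row, shows that in each of these configurations the contributions add up to $g_T(n)$, which equals the number of boxes of $T$ in the column just left of $n$ and in rows $\ge r^\ast$ (respectively $n-r^\ast+1$ when $c^\ast=1$). This is the refinement of Lemma~\ref{lemma:pml} and Theorem~\ref{th:kmu:tableau} that additionally keeps track of $T=\rho(Z)$.

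The crux is this last verification. The main difficulty is the case $c^\ast=k'+1$, in which both insertion moves are legal and one must check that $1+(\text{contribution of the top move})$ equals $g_T(n)$, together with the translation between the row coordinates of $T$ and the overall row coordinates of $Z$ --- notably the empty cell at overall row $1$ in the columns past the bottom row, and the role of the special column. Once the displayed identity is proved, the induction closes at once.
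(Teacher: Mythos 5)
Your strategy coincides with the paper's: induct on $n$, delete the largest letter, use the factorizations $G_T=g_T(n)\,G_{T'}$ and $G_Z=g_Z(n)\,G_{Z'}$, fibre the deletion map $\delta:\rho^{-1}(T)\to\rho^{-1}(T')$, and reduce the lemma to the local identity $\sum_{Z\in\delta^{-1}(Z')}g_Z(n)=g_T(n)$ for every $Z'\in\rho^{-1}(T')$. That reduction is sound. The gap is that this local identity is essentially the whole content of the lemma, and you do not prove it: you assert that ``a careful bookkeeping\dots shows'' it and yourself label it the crux. In the paper this is exactly the decomposition of $\rho^{-1}(T)$ into $\Gamma_1,\Gamma_2,\Gamma_3,\Gamma_4$ together with the values $g_{Z_1}(n)=g_T(n)-1$, $g_{Z_2}(n)=1$, $g_{Z_3}(n)=g_{Z_4}(n)=g_T(n)$, so what you leave out is precisely what the published proof supplies.

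Worse, the case analysis you sketch is incorrect at the delicate point. The letter $n$ cannot ``always be placed on top of the $c^\ast$-th column'' of the top diagram of $Z'$: the new box must extend a row of the top shape contiguously, so this insertion is admissible only when the top shape of $Z'$ has a part equal to $c^\ast-1$ (or $c^\ast=1$). When $k'\neq c^\ast-1$ this is automatic, because column $c^\ast-1$ of $T$ is at least as long as column $c^\ast$; but when $k'=c^\ast-1$ it can fail, and then the fibre over $Z'$ is the singleton given by the bottom-row insertion (the paper's $\Gamma_4$), so your claim that for $c^\ast=k'+1$ ``both insertion moves are legal'' is false, and one must check separately that $g_T(n)=1$ there (true, since columns $c^\ast-1$ and $c^\ast$ of $T$ then have equal height). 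Even though an illegal top insertion would happen to receive weight $0$ under your counting rule, it is not an element of $\rho^{-1}(T)$, so it cannot legitimately appear in the fibre. Finally, the genuinely nontrivial bookkeeping you defer is the fact that the top insertion contributes $g_Z(n)=g_T(n)$ when $k'\neq c^\ast-1$ but $g_Z(n)=g_T(n)-1$ when $k'=c^\ast-1$ (the bottom row of $Z'$ then absorbs one letter of column $c^\ast-1$, shortening that column of the top diagram by one); it is this shift of $1$ that pairs with the contribution $1$ of the bottom-row insertion to give $g_T(n)$. Until these verifications are written out, the proof is incomplete.
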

\begin{proof}
We will proof \eqref{eq:GTGZ} by induction on the size of $T$.
Suppose that \eqref{eq:GTGZ} is true for all standard Young tableau $T$ of size $n-1$.
Given a $T\in\SYT(n)$. Let $T'$ is a standard Young tableau of size $n-1$ obtained from $T$ by removing the letter $n$. This operation is reversible if $\lambda(T)$ is known.
By the hypothesis of induction, we have
\begin{equation}\label{eq:GTGZ'}
	\sum_{Z'\in \rho^{-1}(T')} G_{Z'} = G_{T'},
\end{equation}
It should be noted that $$G_T= G_{T'} \times g_T(n).$$
On the other hand, for a $k$-Young tableau $Z\in \rho^{-1}(T)$ of size $n$,
if we remove the letter $n$, we obtain a $k'$-Young tableau $Z'\in\rho^{-1}(T')$ of sie $n-1$. However, unlike Young tableau, this operation is not always reversible. Let us analyse in detail.
Let $\beta$ be the length of the row containing the letter $n$ in $k$-Young tableau $Z\in\rho^{-1}(T)$ with shape $(k,\mu)$ if $n$ is in the top Young tableau of $Z$.
The set $\rho^{-1}(T)$ can be divided into four subsets:
$\rho^{-1}(T)=\Gamma_1 + \Gamma_2 + \Gamma_3 + \Gamma_4$,
where $\Gamma_1,\Gamma_2,\Gamma_3$ and $\Gamma_4$ are respectively defined as follows:
\begin{align*}
\Gamma_1& =\{Z\in\rho^{-1}(T): {\text{$n$ is in the top Young tableau and $k=\beta-1$}}\}, \\
\Gamma_2& =\{Z\in\rho^{-1}(T): {\text{$n$ is in the bottom row and $k-1\in \mu$}}\}, \\
\Gamma_3& =\{Z\in\rho^{-1}(T): {\text{$n$ is in the top Young tableau and $k\not=\beta-1$}}\},\\
\Gamma_4& =\{Z\in\rho^{-1}(T): {\text{$n$ is in the bottom row and $k-1\not\in \mu$}}\}.
\end{align*}
See Figure~\ref{fig:Gamma1234} for two examples.
It should be noted that some of the $\Gamma_i$ may be empty according to $T$.
\begin{figure}[tbp]
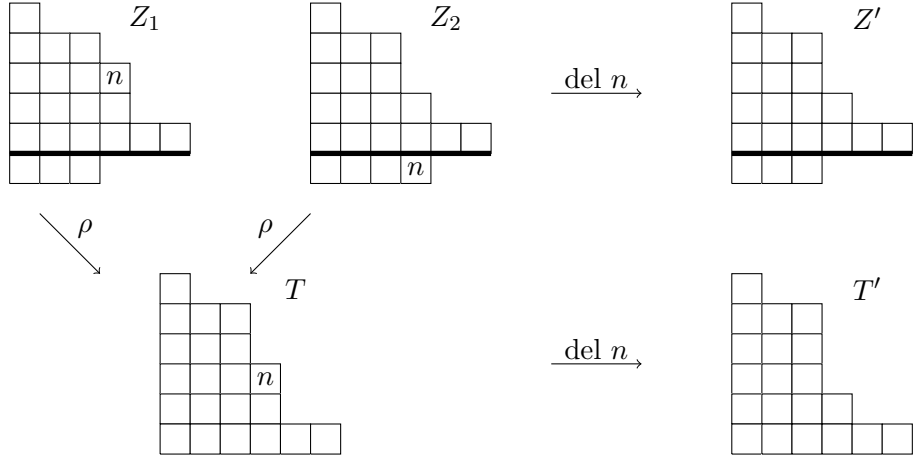

\begin{center}
% [inline block 2: 2 envs, 48600 chars -> data_tex | \begin{tikzpicture} % <<< \fill [gray!0](0.4000,10.8000)--(0.4000,11.2000)--(0.8000,11.2000)--(0.8000,10.8000)--(0.4000,...]
 % >>>
\end{center}
\caption{$Z_1, Z_2\in \rho^{-1}(T)$ are mapped to the same $Z'\in\rho^{-1}(T')$ by removing the letter $n$}
\label{fig:twocases}
\end{figure}
Moreover, we have the relations for the $g$-indexes (see Figure~\ref{fig:twocases}):
$g_{Z_1}(n)=g_{T}(n)-1$ and $g_{Z_2}(n)=1$.
For $Z_3\in \Gamma_3$ and $Z_4\in \Gamma_4$ we have
$g_{Z_3}(n)=g_{T}(n)$ and $g_{Z_4}(n)=g_{T}(n)$.
By all these observations, we have
\begin{align*}
	\sum_{Z\in\rho^{-1}(T)} G_Z &=
	\sum_{Z_1\in \Gamma_1, Z_2\in\Gamma_2} (G_{Z_1} + G_{Z_2})
	+\sum_{Z_3\in \Gamma_3} G_{Z_3} +\sum_{Z_4\in \Gamma_4} G_{Z_4}
	\\
	&=	\sum_{Z_1\in \Gamma_1, Z_2\in\Gamma_2} (g_{Z_1}(n)G_{Z'} + g_{Z_2}(n)G_{Z'})
	+\sum_{Z_3\in \Gamma_3} g_T(n) G_{Z_3'}
	+\sum_{Z_4\in \Gamma_4} g_{T}(n)G_{Z_4'}
	\\
	&=g_T(n)	\sum_{Z'\in\rho^{-1}(T')}G_{Z'} \\
	&=g_T(n)	G_{T'}\\
	&= G_T.
\end{align*}
Hence~\eqref{eq:GTGZ} holds. This completes the proof.
\end{proof}

\begin{proof}[Proof of Theorem~\ref{th:Cn}]
Combining~\eqref{eq:C:TZ} and Lemma~\ref{lemmaGTZ}, we get that
	\begin{align*}	
		C_n(x)&= \sum_{T\in \SYT(n)} \lambda(T)! x^{n+1-\ell(\lambda(T))}\sum_{(k, \mu, Z)\in \rho^{-1}(T)}  G_Z\\
&=\sum_{T\in \SYT(n)} G_T\lambda(T)! x^{n+1-\ell(\lambda(T))},
	\end{align*}
as desired. This completes the proof.
\end{proof}
\section{Proof of Theorem~\ref{th:An}}\label{section05}
We shall prove Theorem~\ref{th:An} by using context-free grammars.
For an alphabet $V$, let $\mathbb{Q}[[V]]$ be the ring of the rational commutative ring of formal power series in
monomials formed from letters in $V$.
Following Chen~\cite{Chen93}, a {\it context-free grammar} over
$V$ is a function $G: V\rightarrow \mathbb{Q}[[V]]$ that replaces each letter in $V$ with an element of $\mathbb{Q}[[V]]$.
The formal derivative $D_G$ is a linear operator defined with respect to the grammar $G$.
In other words, $D_G$ is the unique derivation satisfying $D_G(x)=G(x)$ for $x\in V$.
For example, if $V=\{x,y\}$ and $G=\{x\rightarrow xy,y\rightarrow y\}$, then $D_G(x)=xy,D_G^2(x)=D_G(xy)=xy^2+xy$.
For two formal functions $u$ and $v$,
we have $D_G(u+v)=D_G(u)+D_G(v)$ and $D_G(uv)=D_G(u)v+uD_G(v)$.
For a constant $c$, we have $D_G(c)=0$.
It follows from {\it Leibniz's rule} that
$$D_G^n(uv)=\sum_{k=0}^n\binom{n}{k}D_G^k(u)D_G^{n-k}(v).$$
We refer the reader to~\cite{Chen17,Ma1902} for the recent progress on context-free grammars.

Setting $u_i=D_G^i(u)$, it follows from~\eqref{eq:cD:p} and~\eqref{eq:kmu:tableau} that
\begin{equation}\label{eq:uDG}
	\left(uD_G\right)^n= \sum_{(k, \mu)\vdash n}  \sum_{Z} G_Z u u_{\mu_1}u_{\mu_2}\cdots u_{\mu_{n-1}}D_G^k,
\end{equation}
where the first summation is taken over all types $(k, \mu)$ of $n$ and the second summation is taken over all $k$-Young tableaux of shape $(k,\mu)$.
It is well-known that Eulerian polynomials are symmetric, i.e., $A_0(x)=1$ and
$$A_n(x)=\sum_{i=1}^{n}\Eulerian{n}{i}x^{i} =\sum_{i=1}^{n}\Eulerian{n}{i}x^{n+1-i}~\text{for $n\geq1$}.$$
There is a grammatical interpretation of Eulerian numbers due to Dumont~\cite{Dumont96}, which can be restated as follows.
\begin{proposition}\label{Dumont2}
If
$G=\{x\rightarrow y, y\rightarrow y\}$,
then we have
$$(xD_G)^n(y)=\sum_{i=1}^{n}\Eulerian{n}{i}x^{n+1-i}y^{i}~\text{for $n\geq1$}.$$
\end{proposition}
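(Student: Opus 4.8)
The plan is to prove Proposition~\ref{Dumont2} by a straightforward induction on $n$, using the classical recurrence for Eulerian numbers. With the descent convention of this paper (where $i=n$ always counts as a descent), one has $\Eulerian{n}{i}=0$ for $i\le 0$ and for $i\ge n+1$, and the shifted Eulerian recurrence
\begin{equation*}
	\Eulerian{n+1}{i}=i\,\Eulerian{n}{i}+(n+2-i)\,\Eulerian{n}{i-1},
\end{equation*}
which is just the usual $A(n+1,k)=(k+1)A(n,k)+(n+1-k)A(n,k-1)$ after the index shift $\Eulerian{n}{i}=A(n,i-1)$. The base case $n=1$ is immediate, since $(xD_G)(y)=xD_G(y)=xy=\Eulerian{1}{1}x^{1}y^{1}$.

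For the inductive step I would assume $(xD_G)^{n}(y)=\sum_{i\ge 1}\Eulerian{n}{i}x^{n+1-i}y^{i}$ and apply $xD_G$ once more. Because $D_G(x)=y$ and $D_G(y)=y$, the product rule gives $D_G(x^{a}y^{b})=a\,x^{a-1}y^{b+1}+b\,x^{a}y^{b}$, hence
\begin{equation*}
	xD_G\bigl(x^{n+1-i}y^{i}\bigr)=(n+1-i)\,x^{n+1-i}y^{i+1}+i\,x^{n+2-i}y^{i}.
\end{equation*}
Summing against $\Eulerian{n}{i}$ and reindexing the first family of terms via $i\mapsto i-1$, the coefficient of $x^{n+2-i}y^{i}$ in $(xD_G)^{n+1}(y)$ becomes $i\,\Eulerian{n}{i}+(n+2-i)\,\Eulerian{n}{i-1}$, which equals $\Eulerian{n+1}{i}$ by the recurrence above. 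This yields precisely the asserted formula for $n+1$, completing the induction. (The same manipulation is the algebraic shadow of the standard bijective proof of the Eulerian recurrence by insertion of a new maximal letter: applying $D_G$ to an $x$ records inserting the new maximum so as to create a descent, and to a $y$ records inserting it without changing the descent count.)

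The computation is entirely routine, so there is no genuine obstacle; the only point needing a little care is the boundary bookkeeping, i.e.\ confirming that after the shift $i\mapsto i-1$ the two sums merge over the common range $1\le i\le n+1$ with no spurious contributions, which follows from the vanishing conventions $\Eulerian{n}{0}=\Eulerian{n}{n+1}=0$. One could alternatively derive the proposition from the expansion~\eqref{eq:uDG} with $u=x$, using $D_G^{j}(x)=y$ for $j\ge 1$ and $D_G^{k}(y)=y$ together with Theorem~\ref{th:An} and Lemma~\ref{lemmaGTZ}; but since Theorem~\ref{th:An} is to be proved \emph{from} this proposition, the self-contained induction is the appropriate route here.
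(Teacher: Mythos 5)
Your induction is correct in all details: with the paper's descent convention (the index $i=n$ always counts as a descent) one indeed has $\Eulerian{n}{i}=A(n,i-1)$ in the usual notation, so the shifted recurrence $\Eulerian{n+1}{i}=i\,\Eulerian{n}{i}+(n+2-i)\,\Eulerian{n}{i-1}$ is the right one, your computation $xD_G\bigl(x^{n+1-i}y^{i}\bigr)=(n+1-i)x^{n+1-i}y^{i+1}+i\,x^{n+2-i}y^{i}$ is exact, and the boundary conventions $\Eulerian{n}{0}=\Eulerian{n}{n+1}=0$ make the reindexed sums merge cleanly. Note, however, that the paper does not prove this proposition at all: it is presented as a restatement of Dumont's grammatical interpretation of the Eulerian numbers, with a citation to Dumont's 1996 paper standing in for the argument. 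So your contribution is a self-contained elementary verification of a fact the authors import from the literature; this is a perfectly legitimate route, and you were right to reject the tempting alternative of deriving it from the expansion \eqref{eq:uDG} together with Theorem~\ref{th:An}, since that theorem is itself deduced from this proposition and the argument would be circular. The only thing your write-up could add is a one-line remark that the operator identity is equivalent, after substituting $y\mapsto y$, $x\mapsto x$, to the standard statement of Dumont's grammar for Eulerian numbers, which is exactly how the paper uses it in Section~\ref{section05}.
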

\begin{proof}[Proof of Theorem~\ref{th:An}]
Let $G=\{x\rightarrow y, y\rightarrow y\}$. From~\eqref{eq:uDG}, we have
\begin{equation*}\label{eq:yDG}
	\left(xD_G\right)^n(y)= \sum_{(k, \mu)\vdash n}  \sum_{Z} G_Z x x_{\mu_1}x_{\mu_2}\cdots x_{\mu_{n-1}}D_G^k(y),
\end{equation*}
where $x_0=x$ and $x_i=D_G^i(x)=y$ for $i\geq 1$ and $D_G^k(y)=y$ for $k\geq 0$.
Hence
\begin{align*}
	\left(xD_G\right)^n(y)=\sum_{(k, \mu)\vdash n}  \sum_{Z} G_Z y^{n-|\mu|_0}x^{|\mu|_0+1}.
\end{align*}
Comparing this with Proposition~\ref{Dumont2}, we get
\begin{equation}
A_n(x) =\sum_{i=1}^{n}\Eulerian{n}{i}x^{n+1-i}
	=\left. (xD_G)^n (y) \right|_{y=1}
	=
	\sum_{(k, \mu)\vdash n}  \sum_{Z} G_Z x^{|\mu|_0+1},
\end{equation}
where the first summation is taken over all types $(k, \mu)$ of $n$ and the second summation is taken over all $k$-Young tableaux of shape $(k,\mu)$.
In the same way as the proof of Theorem~\ref{th:Cn}, by using Lemma~\ref{lemmaGTZ}, we get~\eqref{eq:th:An}.
\end{proof}
\section{Proof of Theorem~\ref{thm:Sn}}\label{section06}
We now recall a grammatical interpretation of $S_n(x)$.
\begin{proposition}[{\cite{Chen17,Dumont96}}]\label{grammar01:Snk}
Let $G_1=\{x\rightarrow xy, y\rightarrow x\}$. For $n\geq 1$, we have
$$D_{G_1}^n(x)=\sum_{i=1}^{\lrf{(n+2)/2}}S(n,i)x^iy^{n+2-2i}.$$
Thus $S_n(x)=D_{G_1}^n(x)|_{y=1}$.
\end{proposition}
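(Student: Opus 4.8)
The plan is to prove the sharp bivariate identity $D_{G_1}^n(x)=\sum_i S(n,i)x^iy^{n+2-2i}$ and then obtain $S_n(x)=D_{G_1}^n(x)|_{y=1}$ by specializing $y=1$. Write $F_n(x,y):=D_{G_1}^n(x)$. The key structural observation is that, if we assign the weights $\deg x=2$ and $\deg y=1$, then $G_1$ raises weight by exactly $1$: indeed $D_{G_1}(x)=xy$ has weight $3=2+1$ and $D_{G_1}(y)=x$ has weight $2=1+1$, so the derivation $D_{G_1}=xy\,\partial_x+x\,\partial_y$ carries a weighted-homogeneous element of weight $w$ to one of weight $w+1$. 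Since $x$ has weight $2$, an immediate induction shows $F_n$ is weighted-homogeneous of weight $n+2$, and therefore
\begin{equation*}
F_n(x,y)=y^{n+2}s_n(t),\qquad t:=x/y^2,\qquad s_n(t):=F_n(t,1).
\end{equation*}

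First I would reduce the bivariate recursion $F_{n+1}=D_{G_1}(F_n)$ to a one-variable recurrence for $s_n$. Substituting $F_n=y^{n+2}s_n(xy^{-2})$ into $D_{G_1}=xy\,\partial_x+x\,\partial_y$ and applying the chain rule, each term acquires the common factor $y^{n+3}$ times a polynomial in $t=xy^{-2}$; collecting them gives
\begin{equation*}
s_{n+1}(t)=(n+2)\,t\,s_n(t)+t(1-2t)\,s_n'(t).
\end{equation*}
This is exactly the recurrence satisfied by $S_{n+1}(t)$, namely the displayed relation $S_n=(n+1)xS_{n-1}+x(1-2x)S_{n-1}'$ with $n$ replaced by $n+1$. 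Combined with the base case $s_1(t)=D_{G_1}(x)|_{y=1}=t=S_1(t)$, induction yields $s_n=S_n$ for every $n\geq1$.

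Finally I would expand the homogeneous form. Since $S_n(x)=\sum_i S(n,i)x^i$, we obtain
\begin{equation*}
F_n(x,y)=y^{n+2}S_n(xy^{-2})=\sum_i S(n,i)\,x^i\,y^{n+2-2i},
\end{equation*}
which is the asserted identity, and the specialization $y=1$ recovers $S_n(x)=D_{G_1}^n(x)|_{y=1}$.

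The step requiring the most care is the chain-rule reduction: one must verify that the two contributions $xy\,\partial_x F_n$ and $x\,\partial_y F_n$ recombine precisely into $y^{n+3}\bigl[(n+2)\,t\,s_n+t(1-2t)\,s_n'\bigr]$. In particular, the $\partial_y$ derivative produces both the weight factor $(n+2)y^{n+1}s_n$ and the inner-derivative term $-2xy^{-3}s_n'$, and one has to track that the latter supplies exactly the $-2t^2s_n'$ piece responsible for the $1-2t$ factor; the matching of the $(n+2)t\,s_n$ term against the grammar's weight is what pins down the correct multiplier. Once this identity is in hand, the homogeneity bookkeeping and the induction are entirely routine.
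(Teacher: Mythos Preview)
Your argument is correct. The paper does not supply its own proof of this proposition: it is quoted from \cite{Chen17,Dumont96} as a known grammatical interpretation of $S_n(x)$ and is used only as input for the subsequent computation. Your weighted-homogeneity reduction (assigning $\deg x=2$, $\deg y=1$, writing $F_n=y^{n+2}s_n(xy^{-2})$, and verifying that the resulting one-variable recurrence $s_{n+1}=(n+2)ts_n+t(1-2t)s_n'$ coincides with the known recurrence for $S_{n+1}$) is a clean and standard way to establish such grammar identities, and the chain-rule bookkeeping you flag as delicate checks out exactly as you describe.
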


It is routine to verify that Proposition~\ref{grammar01:Snk} can be restated as follows.
\begin{proposition}\label{grammar02:Snk}
Let $G_2=\{x\rightarrow y, y\rightarrow 1\}$. For $n\geq 1$, we have
$$(xD_{G_2})^n(x)=\sum_{i=1}^{\lrf{(n+2)/2}}S(n,i)x^iy^{n+2-2i}.$$
\end{proposition}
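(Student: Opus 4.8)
The plan is to reduce Proposition~\ref{grammar02:Snk} directly to Proposition~\ref{grammar01:Snk} by showing that the operator $xD_{G_2}$ on $\mathbb{Q}[[V]]$, with $V=\{x,y\}$, is nothing other than the formal derivative $D_{G_1}$. The one structural fact I would record first is that if $D$ is a derivation on a commutative ring and $a$ is any element of that ring, then the composite $u\mapsto a\cdot D(u)$ is again a derivation: by the Leibniz rule for $D$ together with commutativity,
$$a\,D(uv)=a\bigl(D(u)v+uD(v)\bigr)=\bigl(aD(u)\bigr)v+u\bigl(aD(v)\bigr).$$
Applying this with $a=x$ and $D=D_{G_2}$ shows that $xD_{G_2}$ is a derivation on $\mathbb{Q}[[V]]$.

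Next I would evaluate $xD_{G_2}$ on the two generators. From $G_2=\{x\rightarrow y,\ y\rightarrow 1\}$ we get $(xD_{G_2})(x)=x\cdot D_{G_2}(x)=xy$ and $(xD_{G_2})(y)=x\cdot D_{G_2}(y)=x\cdot 1=x$. These coincide exactly with the substitution rules of $G_1=\{x\rightarrow xy,\ y\rightarrow x\}$. Since $D_{G_1}$ is the \emph{unique} derivation on $\mathbb{Q}[[V]]$ with $D_{G_1}(x)=xy$ and $D_{G_1}(y)=x$, and $xD_{G_2}$ is a derivation with these same values, we conclude $xD_{G_2}=D_{G_1}$ as operators. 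Iterating gives $(xD_{G_2})^n=D_{G_1}^n$ for all $n\geq 1$, and in particular $(xD_{G_2})^n(x)=D_{G_1}^n(x)$.

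The proof then closes by invoking Proposition~\ref{grammar01:Snk}, which asserts $D_{G_1}^n(x)=\sum_{i=1}^{\lrf{(n+2)/2}}S(n,i)x^iy^{n+2-2i}$; substituting the identity from the previous step yields the claimed expansion for $(xD_{G_2})^n(x)$. I do not expect any genuine obstacle here — this is precisely the ``routine verification'' promised in the text — so the only points deserving an explicit word are that multiplication by $x$ sends a derivation to a derivation (which is where commutativity of $\mathbb{Q}[[V]]$ enters) and that a grammar-derivation is pinned down by its values on the alphabet. As a sanity check one may confirm the case $n=2$, where both $(xD_{G_2})^2(x)$ and $D_{G_1}^2(x)$ equal $xy^2+x^2$, in agreement with $S_2(x)=x+x^2$.
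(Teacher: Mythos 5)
Your proof is correct and is exactly the natural filling-in of what the paper dismisses as ``routine to verify'': the identification $xD_{G_2}=D_{G_1}$, obtained by noting that both are derivations on $\mathbb{Q}[[x,y]]$ agreeing on the generators, immediately transfers Proposition~\ref{grammar01:Snk} to the stated form. No gap; this matches the paper's intended argument.
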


\begin{proof}[Proof of Theorem~\ref{thm:Sn}]
Let $G_2=\{x\rightarrow y, y\rightarrow 1\}$. From~\eqref{eq:uDG}, we have
\begin{equation}\label{eq:xDGx}
	\left(xD_{G_2}\right)^n(x)= \sum_{(k, \mu)\vdash n}  \sum_{Z} G_Z x x_{\mu_1}x_{\mu_2}\cdots x_{\mu_{n-1}}D_{G_2}^k(x).
\end{equation}
Note that
\begin{equation*}
x_0=D^0_{G_2}(x)=x,~x_1=D_{G_2}(x)=y,~x_2=D^2_{G_2}(x)=1,~x_i=D_{G_2}^i(x)=0 \quad\text{for $i\geq 3$}.
\end{equation*}
Recall that for $(k, \mu)\vdash n$, we have $k\in [n]$.
Then $x_{\mu_1}x_{\mu_2}\cdots x_{\mu_{n-1}}D_{G_2}^k(x)\neq 0$ if and only if $0\leq \mu_j\leq2$ for all $j\in [n-1]$ and $1\leq k\leq2$.
Thus
\begin{equation}\label{muform}
\mu=(1^{m_1}2^{m_2}0^{n-1-m_1-m_2}),
\end{equation}
where $m_1$ and $m_2$ are nonnegative integers.
Let $Z$ be a $k$-Young tableau of shape $(k,\mu)$, where $\mu$ is given by~\eqref{muform}.
As in the proof of Theorem~\ref{th:Cn},
we define $T=\rho(Z)$ to be the unique standard Young tableau
such that the sets of the letters in the $j$-th column in $Z$ and $T$ are the same for all $j$.
Then $Z$ has at most two columns.
Therefore, by using Proposition~\ref{grammar02:Snk}, we get
\begin{equation}
S_n(x) =\left. (xD_{G_2})^n (x) \right|_{y=1}
	=\sum_{(k, \mu)\vdash n}  \sum_{Z} G_Z x^{|\mu|_0+1},
\end{equation}
where the first summation is taken over all types $(k, \mu)$ of $n$, the second summation is taken over all $k$-Young tableaux of shape $(k,\mu)$ and the partitions $\mu$ have the form~\eqref{muform}.
In the same way as the proof of Theorem~\ref{th:Cn}, by using Lemma~\ref{lemmaGTZ}, we get~\eqref{SnxSYT}.
\end{proof}
\section{Concluding remarks}
In this paper, we present combinatorial expansions of $(c(x)D)^n$ in terms of inversion sequences as well as $k$-Young tableaux.
By introducing the $g$-index of Young tableau, we find that Eulerian polynomials, second-order Eulerian polynomials, Andr\'e polynomials and the generating polynomials of gamma coefficients of Eulerian polynomials can be expressed in terms of standard Young tableaux, which imply a deep connection among these polynomials.

\end{document}